\documentclass[reqno,12pt]{amsart}
\usepackage[a4paper, margin=20mm]{geometry}

\usepackage{amsmath,stmaryrd, mathtools}
\usepackage{amssymb}
\usepackage{amsthm}
\usepackage[utf8]{inputenc}
\usepackage{graphicx}
\usepackage{xcolor}
\usepackage[normalem]{ulem}
\usepackage[english]{babel}

\usepackage{bbm} 
\usepackage{mathrsfs} 
\usepackage{stackengine} 
\usepackage{array} 
\usepackage{bm} 

\usepackage{enumitem} 
\usepackage{hyperref} 
\newcommand{\sref}[2]{\hyperref[#2]{#1~\ref{#2}}} 

\usepackage[misc]{ifsym} 

\def\geq{\ensuremath\geqslant}
\def\leq{\ensuremath\leqslant}

\def\Z{\ensuremath\mathbb{Z}}
\def\Q{\ensuremath\mathbb{Q}}
\def\R{\ensuremath\mathbb{R}}
\def\C{\ensuremath\mathbb{C}}
\def\K{\ensuremath\mathbb{K}}

\def\den{\ensuremath\mathrm{den}}
\def\Li{\ensuremath\mathrm{Li}}

\def\Qbar{\ensuremath\overline{Q}}

\newcommand\house[1]{%
  \begingroup\setlength\arraycolsep{0pt}
  \begin{array}[t]{@{}c@{}|c|@{}c@{}}
  \firsthline
  &\;#1\;{}&
  \end{array}
  \endgroup
} 

\newtheorem{theorem}{Theorem}[subsection]

\newtheorem{lemma}[theorem]{Lemma}

\newtheorem{proposition}[theorem]{Proposition}
\newtheorem{notation}[theorem]{Notation}
\newtheorem{notations}[theorem]{Notations}

\newtheorem{remark}{Remark}[subsection] 
\newtheorem*{remark*}{Remark}

\makeatletter
\newcommand\footnoteref[1]{\protected@xdef\@thefnmark{\ref{#1}}\@footnotemark}
\makeatother

\usepackage{csquotes}

\allowdisplaybreaks

\numberwithin{equation}{section}

\title[]{Linear independence of values of polylogarithms with periodic coefficients, outside the disk of convergence}
\author[]{Ludovic Mistiaen}
\thanks{Ludovic Mistiaen (Corresponding author) - Institut Fourier, Université Grenoble Alpes, CNRS, CS
40700, 38058 Grenoble cedex 9, France.} 

\subjclass[2020]{Primary 11J72, Secondary 11M06, 34M03}

\keywords{Linear independence over a field, Polylogarithms, Shidlovskii's lemma, Padé-type approximation}

\begin{document}
\maketitle

\begin{abstract}
For any non-zero periodic function $f: \Z \to \C$ of period $N\geq 1$ and any algebraic number $z_0 \neq 0$ that doesn't lie on a half-line $[e^{2i\ell\pi/N}, e^{2i\ell\pi/N}\infty[$, $0\leq\ell\leq N-1$, we give a lower bound of order $\sqrt{{s}/{\log(s)}}$ on the dimension of the $\Q(z_0)$-vector space spanned by the numbers $L(f, i, z_0) := \sum_{m=1}^{+\infty} f(m)z_0^m/m^i$, $1\leq i\leq s$. This generalizes a result Fischler proved in 2026, corresponding to $f$ identically equal to $1$ and $|z_0| \leq 1$: in this case, the numbers $L(f, i, z_0) = \Li_i(z_0)$ are polylogarithm values. Except for a finite number of cuts in the complex plane, our result still holds when $|z_0|>1$, that is in a domain where the series definition above for the numbers $L(f, i, z_0)$ doesn't converge anymore, and we make sense of these numbers through analytic continuation. 
To obtain this result, we construct linear combinations of the numbers $L(f, i, z_0)$ using a refined version of Siegel's lemma, and we apply to them a linear independence criterion generalizing the one used by Fischler. To check the assumptions of this criterion, we rely on an integral representation of the polylogarithm functions and on a ``Shidlovskii lemma".
\end{abstract}

\section{Introduction}
\setcounter{subsection}{1}

About the arithmetic nature of the values of the Riemann zeta function at integers, it is long known that the number $\zeta(2k)$ is a rational multiple of $\pi^{2k}$, hence transcendental. However, not much is known about the numbers $\zeta(2k+1)$, even though these numbers are conjectured to be
algebraically independent over $\mathbb{Q}$, together with $\pi$, see \cite[Conjecture 0.1]{fischler2004}. The first
result in this direction is due to Apéry \cite{apery79}, who proved that
$\zeta(3)$ is irrational. Ball and Rivoal \cite{rivoal2000, ballrivoal2001} then obtained a
lower bound on the dimension of the vector space spanned by these numbers.
 
\begin{theorem}
\label{thmballrivoal}
    (Ball-Rivoal, 2001).
    For odd integers $s\geq 3$, we have as $s$ goes to infinity
    \begin{equation*}
        \mathrm{dim}_\Q \mathrm{Span}_\Q \Big\{1, \zeta(3), \zeta(5), ..., \zeta(s)\Big\} \geq  \hspace{1mm}\frac{1+o(1)}{1+\log(2)}\log(s).
    \end{equation*}
\end{theorem}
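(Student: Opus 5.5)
The plan follows the classical Ball--Rivoal route: build explicit linear forms in $1,\zeta(3),\dots,\zeta(s)$ with integer coefficients, control their size and the size of their coefficients, and conclude with Nesterenko's linear independence criterion.

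\textbf{Construction of the linear forms.} Fix an even integer $n$ and a parameter $r$ with $1\le r< s/2$, to be optimised later. Set
\begin{equation*}
R_n(t)=n!^{\,s-2r}\,\frac{(t-rn)_{rn}\,(t+n+1)_{rn}}{(t)_{n+1}^{\,s}},
\qquad
S_n=\sum_{t=1}^{\infty}R_n^{(0)}(t),
\end{equation*}
where $(x)_k$ is the Pochhammer symbol. A partial fraction expansion $R_n(t)=\sum_{i=1}^{s}\sum_{j=0}^{n}c_{i,j}/(t+j)^i$ gives
\begin{equation*}
S_n=\rho_0+\sum_{i=1}^{s}\rho_i\,\zeta(i),
\end{equation*}
with $\rho_1=\sum_j c_{1,j}=0$ because $R_n(t)=O(t^{-2})$. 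The function $R_n$ is symmetric under $t\mapsto -n-t$, up to the sign $(-1)^{(n+1)s}$. Taking $s$ odd and $n$ even, this symmetry forces $c_{i,n-j}=(-1)^{i+1}c_{i,j}$, and hence $\rho_i=0$ for every even $i$. So $S_n$ is a linear form in $1,\zeta(3),\zeta(5),\dots,\zeta(s)$ only.

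\textbf{Arithmetic and size estimates.} The standard Leibniz-formula analysis of $c_{i,j}$, writing $R_n$ as a product of elementary factors $\frac{(t-j)_n}{(t)_{n+1}}$-type that are integer-valued after multiplying by $n!$, gives $d_n^{\,s-i}\,c_{i,j}\in\Z$. Here $d_n=\mathrm{lcm}(1,\dots,n)=e^{n+o(n)}$. Consequently $d_n^{\,s}S_n\in\Z+\Z\zeta(3)+\dots+\Z\zeta(s)$.

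Two estimates are then needed.
\begin{itemize}
\item[(a)] Coefficients: bounding $c_{i,j}$ by Cauchy's formula on circles around $-j$ gives $\limsup|\rho_i|^{1/n}\le 2^{2r+1}(r+1)^{... }$; the precise form is $(2r+1)^{\cdots}$-type. Overall, $\log|\rho_i|\le n\big((s+2r)\log 2+o(1)\big)$ suffices.
\item[(b)] Smallness: $S_n$ is a sum of positive terms, since $R_n(t)=0$ for $1\le t\le rn$. Apply the saddle point method to $S_n$, viewed as an integral or as a sum dominated by $t\approx \kappa n$, or use Barnes-type integrals. This yields
\begin{equation*}
\lim_{n\to\infty}S_n^{1/n}=\varphi_{r,s}>0,
\end{equation*}
where $\varphi_{r,s}$ is the minimum of an explicit real function, and
\begin{equation*}
\varphi_{r,s}\le (2r+1)^{2r+1}\,2^{\,s}\,r^{-(s-2r)}\ \text{roughly}.
\end{equation*}
\end{itemize}

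\textbf{Applying Nesterenko's criterion.} Nesterenko's criterion states the following. Suppose there are linear forms $\ell_n$ in $\theta_1,\dots,\theta_m$ with integer coefficients bounded by $Q^{n(1+o(1))}$ and values satisfying $|\ell_n|=\alpha^{n(1+o(1))}$ with $0<\alpha<1$. Then
\begin{equation*}
\dim\ge 1-\frac{\log\alpha}{\log Q}.
\end{equation*}
We apply it with
\begin{equation*}
\alpha=e^{s}\varphi_{r,s},\qquad Q=e^{s}2^{s+2r}\ \text{(up to constants)}.
\end{equation*}
Choose $r\approx s/\log^2 s$. Then $\log\alpha\approx -s\log r+O(s)\approx -s\log s$, while $\log Q\approx s(1+\log 2)+o(s)$. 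This gives
\begin{equation*}
\dim\ge\frac{(1+o(1))\log s}{1+\log 2}.
\end{equation*}

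\textbf{Expected difficulties.} The main obstacle is step (b): the exact asymptotics of $S_n$, in particular the positivity needed for the lower bound $|\ell_n|\ge\alpha^{n(1+o(1))}$. Nesterenko requires the precise exponential rate, not just an upper bound, so I would first try to express $S_n$ as a real multiple integral with positive integrand, or apply the saddle point to the Barnes integral. The vanishing of the even-index coefficients through the symmetry argument is the other delicate, though routine, point.
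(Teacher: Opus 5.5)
Your outline is the original Ball--Rivoal argument, and it matches the route the paper summarizes when it quotes this theorem (the paper gives no proof of its own). The ingredients are the same: the symmetric rational function $R_n$, the symmetry $t\mapsto -n-t$ killing the even zeta values, $d_n^{s-i}c_{i,j}\in\Z$, and Nesterenko's criterion with $r\approx s/\log^2 s$.

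\textbf{Step (a) is not justified.} Bounding $\rho_i$ through the partial fraction coefficients cannot give $\log|\rho_i|\le n((s+2r)\log 2+o(1))$. Indeed
\[
|c_{s,j}|=n!^{\,s-2r}\,\frac{(j+rn)!\,(n-j+rn)!}{j!^{\,s+1}\,(n-j)!^{\,s+1}},
\qquad\text{so}\qquad
|c_{s,n/2}|^{1/n}\longrightarrow 2^{s-2r}(2r+1)^{2r+1},
\]
which exceeds $2^{s+2r}$ for every $r\ge 1$. What the Cauchy estimate, combined with the triangle inequality over $j$, actually yields is $\limsup_n|\rho_i|^{1/n}\le 2^{s-2r}(2r+1)^{2r+1}$ for $0\le i\le s$. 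So you should take $Q=e^{s}2^{s-2r}(2r+1)^{2r+1}$. This is harmless for the conclusion. For $r\approx s/\log^2 s$ the extra term $(2r+1)\log(2r+1)=O(s/\log s)$ is $o(s)$, so still $\log Q=s(1+\log 2)+o(s)$ and your final computation is unchanged.

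\textbf{Step (b) is easier than you fear.} Here $\varphi_{r,s}$ is a maximum, not a minimum, and no saddle point or Barnes integral is needed. Positivity plus Stirling give $R_n(\lfloor xn\rfloor)^{1/n}\to g(x)$ with
\[
g(x)=\frac{x^{x}\,(x+1+r)^{x+1+r}}{(x-r)^{x-r}\,(x+1)^{x+1}}\Big(\frac{x^{x}}{(x+1)^{x+1}}\Big)^{s},\qquad x>r.
\]
A single term near the maximizer gives the lower bound, and the tail $t\ge Cn$ is negligible. Hence $\lim_n S_n^{1/n}=\max_{x>r}g(x)$, which is the exact rate Nesterenko's criterion requires. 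The elementary inequalities $(1+r/y)^{y}\le e^{r}$ then give $g(x)\le (2e^{2})^{r}(x+1)^{-(s-2r)}$. Therefore
\[
\log\varphi_{r,s}\le r\log(2e^2)-(s-2r)\log(r+1)=-(1+o(1))\,s\log s,
\]
which is all the final estimate needs once the exact limit is known.
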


Their proof consists in applying Nesterenko's linear independence
criterion \cite{nesterenko85} to a sequence of explicit linear combinations of
$1, \zeta(3), ..., \zeta(s)$, obtained by summing at positive integers the
values of a rational function. This mechanism goes back to
Nikishin \cite{nikishin79} and is closely related to Padé approximation, see
\cite{fischlerrivoal2003} for an overview. Nesterenko's criterion requires both an upper and
a lower bound on the size of the linear combinations, which are provided by the explicit construction. Counting irrational numbers instead of
bounding a dimension, Fischler, Sprang and Zudilin \cite{fischlersprangzudilin2019}, then Lai and
Yu \cite{laiyu2020, lai2025}, obtained lower bounds of respective orders
$2^{\frac{\log(s)}{\log\log(s)}(1+o(1))}$ and
$\big(1.28+o(1)\big)\sqrt{s/\log(s)}$ for the number of irrationals among
$\zeta(3), \zeta(5), \ldots, \zeta(s)$. Such statements are of a weaker nature than
the linear independence provided by \sref{Theorem}{thmballrivoal}.
 
\noindent Fischler \cite{fischler2026} recently improved the lower bound of \sref{Theorem}{thmballrivoal} by establishing
\begin{equation}
\dim_{\Q} \mathrm{Span}_{\Q}
\Big( 1, \zeta(3), \zeta(5), \ldots, \zeta(s) \Big)
\geqslant 0.21 \sqrt{\frac{s}{\log(s)}}
\end{equation}
for any sufficiently large odd integer $s$. The novelty of his approach is to
give up explicit rational functions, and to construct them instead by means of
Siegel's lemma: the resulting coefficients are much smaller than in explicit
constructions, which is what makes the gain from $\log(s)$ to
$\sqrt{s/\log(s)}$ possible. However, this approach doesn't provide a lower bound on the size of the linear combinations, and Nesterenko's criterion has to be replaced by a criterion
\emph{à la} Siegel, in which that lower bound is traded for the construction of
sufficiently many independent linear combinations. This last point is checked
by means of a ``Shidlovskii's lemma'', a multiplicity estimate for solutions of
differential systems introduced in \cite{shidlovskii89} and developed
in \cite{bertrandbeukers85, bertrand2012, fischler2018}.

\smallskip
Similar results exist when the numbers $\zeta(i)$ are replaced by the values
$L(f,i) \break := \sum_{m \geqslant 1} f(m)/m^i$ of the $L$-function attached to a
periodic function $f : \mathbb{Z} \to \mathbb{C}$ of period $N \geqslant 1$. The
case $f$ identically equal to $1$ is that of zeta values. Nishimoto \cite{nishimoto2011} generalized
\sref{Theorem}{thmballrivoal} to this setting by establishing, for any parity $\varepsilon$ and as the integer $s$ of parity $\varepsilon$ goes to infinity:
\begin{equation}
\label{thmnishimoto}
        \mathrm{dim}_\Q \mathrm{Span}_\Q \Big\{ \;L(f, i) \;|\; 2\leq i\leq s, \;i\equiv \varepsilon \mod 2 \; \Big\} \geq \frac{1+o(1)}{N+\log(2)}\log(s).
\end{equation}
Restricting the integers $i$ to a fixed parity class is necessary, since for an even, resp. odd, Dirichlet character $\chi$, the numbers $L(\chi,i)$ with $i$ even, resp. odd,
are in $\overline{\Q}\pi^i$, making the lower bound in \eqref{thmnishimoto} trivial (see for example \cite[Corollary 2.10]{neukirch97}). Fischler \cite{fischler2018}
recovered this result with the method of ``Shidlovskii's lemma'' used in the
present paper, and removed the dependence on $N$ in the constant
in \cite{fischler2020}. Finally, the author \cite{chapitre1} transposed the construction
of \cite{fischler2026} to this setting, and obtained, for a non-principal Dirichlet character $\chi$
of conductor $N \not\equiv 0 \mod 4$, and
$\mathbb{K} = \mathbb{Q}\big(e^{2i\pi/N}\big)$:
\begin{equation*}
        \mathrm{dim}_\K \mathrm{Span}_\K \Big\{ \;L(\chi, i) \;|\; 2\leq i\leq s, \;i\equiv \varepsilon \:[2] \; \Big\} \geq \frac{0.42}{\varphi(N)\sqrt{N}}\sqrt{\frac{s}{\log(s)}},
\end{equation*}
where $\varphi$ is Euler's totient function and $\varepsilon$ is the parity opposed to that of $\chi$.
 
\smallskip
In another direction, the numbers $\zeta(i)$ are the values at $z=1$ of the polylogarithm functions
$\mathrm{Li}_i(z) = \sum_{m \geqslant 1} z^m/m^i$, and they can be replaced by numbers $\mathrm{Li}_i(z_0)$ at an algebraic point $z_0$. When
$|z_0|$ is small, linear independence can be proved for every $s$:
Nikishin \cite{nikishin79} proved that the numbers
$1, \mathrm{Li}_1(p/q), \ldots, \mathrm{Li}_s(p/q)$ are
$\mathbb{Q}$-linearly independent as soon as $|p| \geqslant 1$ and
$|q| \geqslant (4s)^{s(s-1)}|p|^s$. This was later refined by Hata \cite{hata90}.
The Chudnovsky brothers \cite{chudnovsky84, chudnovskychudnovsky85} obtained analogous statements at an arbitrary algebraic
point $z_0$ whose modulus is small enough in terms of $s$, and of the degree
and the height of $z_0$. Their methods rely on Padé approximation and on the
theory of $G$-functions, of which the polylogarithms are a basic example.
 
\noindent When $z_0$ with $|z_0| \leq 1$ is arbitrarily fixed and $s \to +\infty$, the situation is exactly the one
described above for zeta values. Adapting the proof of \sref{Theorem}{thmballrivoal}, Rivoal
\cite{rivoal2003} for real points, and Marcovecchio \cite{marcovecchio2006} in general, proved
that for any non-zero algebraic $z_0$ with $|z_0| < 1$, we have as the integer $s$ goes to infinity
\begin{equation}
\dim_{\mathbb{Q}(z_0)} \mathrm{Span}_{\mathbb{Q}(z_0)}
\Big\{ \mathrm{Li}_i(z_0) \mid 1 \leqslant i \leqslant s \Big\}
\geqslant \frac{1+o(1)}{(1+\log(2))[\mathbb{Q}(z_0):\mathbb{Q}]} \log(s).
\end{equation}
Fischler's method yields the following improvement \cite[Theorem 2]{fischler2026}: for any algebraic $z_0$ with $|z_0| \leqslant 1$ and
$z_0 \notin \{0,1\}$, and any integer $s$ large enough, we have
\begin{equation}
\dim_{\mathbb{Q}(z_0)} \mathrm{Span}_{\mathbb{Q}(z_0)}
\Big\{ 1, \mathrm{Li}_i(z_0) \mid 1 \leqslant i \leqslant s \Big\}
\geqslant \frac{0.26}{[\mathbb{Q}(z_0):\mathbb{Q}]} \sqrt{\frac{s}{\log(s)}},
\end{equation}
the constant $0.26$ being replaced by $0.52$ when
$\mathbb{Q}(z_0) \not\subseteq \mathbb{R}$.
 
\smallskip
All the results quoted so far concern points $z_0$ lying in the closed unit
disk, that is, in the domain where the series defining $\mathrm{Li}_i(z_0)$ and
$L(f,i,z_0) := \sum_{m \geqslant 1} f(m)z_0^m/m^i$ converge. Outside that disk,
these numbers are still defined by analytic continuation, but the linear combinations are no longer given by
convergent series, while the estimates of their size usually rest on that
series representation. An integral representation can be used instead, as in \cite{fischlerrivoal2021}, which
treats the values of a general $G$-function $F$ at any algebraic point of a
domain $\mathcal{D}_F$ star-shaped at $0$ and containing the disk of
convergence, and obtains there a lower bound of order $\log(S)$, thereby
extending to that domain the results of \cite{fischlerrivoal2020}.
 
\smallskip
In this paper, we combine the non-explicit construction of \cite{fischler2026} with the
$L$-function setting of \cite{fischler2018, chapitre1}, and we use an integral
representation of the polylogarithm functions which allows the estimates
of \cite[Lemma 6]{fischler2026} to be carried outside the unit disk. Our main result is
the following. Here and throughout the paper, the numbers $L(f,i,z)$ are
defined on $\mathcal{D}$ by analytic continuation, as explained in \sref{Section}{partie2}.

\begin{theorem}
\label{théorèmeprincipal}
    Let $N\geq 1$ be an integer. Let $f: \Z \to \C$ be a non-zero $N$-periodic function. Let $\mu = e^{\frac{2i\pi}{N}}$ and $\mathcal{D}$ be the complex plane minus $0$ and the half-lines $[\mu^\ell, \mu^\ell \infty [$, $0\leq \ell \leq N-1$. \\ 
    Then for any algebraic number $z_0 \in\mathcal{D}$ and any integer $s$ large enough, we have
    \begin{equation*}
        \mathrm{dim}_{\Q(z_0)} \mathrm{Span}_{\Q(z_0)} \Big\{ \;L(f, i, z_0) \;|\; 1\leq i\leq s \Big\} \geq \frac{0.26}{[\Q(z_0):\Q]\sqrt{N}}\sqrt{\frac{s}{\log(s)}}.
    \end{equation*}
    Moreover, if $\Q(z_0) \not\subseteq \R$, the constant $0.26$ can be replaced by $0.52$.
    \end{theorem}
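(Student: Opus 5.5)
The proof will follow the scheme of Fischler \cite{fischler2026}, adapted as in \cite{chapitre1}. We fix parameters $n\to+\infty$, $r \approx \sqrt{s\log s}$ and $s$, and use Siegel's lemma to build an auxiliary rational function
\begin{equation*}
R_n(t)=\frac{P(t)}{(t+1)^s(t+2)^s\cdots(t+n)^s}, \qquad \deg P \leq s n - r,
\end{equation*}
with integer (or $\mathcal{O}_{\Q(z_0)}$-) coefficients. We impose vanishing conditions, namely multiplicity at $z=0$ in the associated series, together with $\mu^\ell$-vanishing conditions that make the coefficients of $L(f,i,z)$ combine coherently across residue classes mod $N$. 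The approximating forms are
\begin{equation*}
S_n(z)=\sum_{m\geq 1} f(m) R_n(m) z^{m}
=P_{0,n}(z)+\sum_{i=1}^{s} P_{i,n}(z)\, L(f,i,z),
\end{equation*}
where the $P_{i,n}$ are polynomials obtained by partial fraction decomposition of $R_n$, since $f(m)z^m$ splits through $\sum_{\ell} \hat f(\ell)(\mu^\ell z)^m$. Because of the refined Siegel lemma, the coefficients of $P$ are small, of size roughly $e^{(s n/r)\log(\dots)}$ instead of exponential in $sn$. This is the source of the gain $\sqrt{s/\log s}$.

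The first main step is the upper bound on $|S_n(z_0)|$ for $z_0\in\mathcal{D}$, possibly outside the unit disk. The series no longer converges there, so I would write $S_n(z)$ by means of an integral representation of the polylogarithms, $\Li_i(z)=\frac{z}{(i-1)!}\int_0^{\infty}\frac{t^{i-1}}{e^t-z}\,dt$. More conveniently, I would use the Mellin--Barnes / Hankel-type formula $\sum_m R_n(m)w^m = \frac{1}{2i\pi}\int R_n(t)\frac{\pi (-w)^{t}}{\sin \pi t}dt$ along a vertical line. This integral is valid for $w=\mu^\ell z_0\notin[1,\infty[$, which is exactly the condition $z_0\in\mathcal{D}$. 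The vanishing of $R_n$ at high order gives the small term, and a saddle-point or trivial estimate gives $|S_n(z_0)|\leq e^{-c\, n r\log(\cdot)(1+o(1))}$, playing the role of \cite[Lemma 6]{fischler2026}. I expect this to be the main obstacle: the decay must be uniform in the Siegel-lemma construction, where only bounds on the coefficients of $P$ are known, not $P$ itself. I would first try to bound $\max_{|t-\text{path}|}|P(t)|$ by the coefficient bound times $(1+|t|)^{\deg P}$ and check that the argument of $(-w)^t$ compensates along the line.

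Next comes the arithmetic upper bound on the $P_{i,n}(z_0)$: after multiplication by $d_n^{s}$ and a denominator of $z_0$, these are algebraic integers of house at most $e^{O(sn)}$ only through $d_n^s$. The loss is controlled by choosing $r$ so that the two exponents balance. Finally, I would apply the Siegel-type linear independence criterion generalizing the one of \cite{fischler2026}, over $\K=\Q(z_0)$, taking complex conjugates into account when $\Q(z_0)\not\subseteq\R$, which yields the factor $2$ gain from $0.26$ to $0.52$. This criterion requires, for each $n$, many linearly independent forms $S_{n,j}$. Their independence will be obtained from the Shidlovskii lemma \cite{fischler2018,bertrand2012}, applied to the differential system satisfied by $(1,L(f,1,z),\dots,L(f,s,z))$, which has its singularities only at $0,\infty,\mu^\ell$. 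The lemma bounds the order of vanishing at $0$ of a nonzero form, and hence forces the determinant of the forms obtained by letting the auxiliary parameters vary to be nonzero at $z_0\in\mathcal{D}$. Optimizing the parameters, with $\sqrt N$ arising from the $N$ residue-class constraints, gives the stated constant.
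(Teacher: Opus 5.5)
There are genuine gaps. Both quantitative steps that should produce $\sqrt{s/\log s}$ fail as you describe them, and the zero-estimate step aims at the wrong statement.

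On the arithmetic side, you clear denominators with $d_n^{s}$, so $\log\beta\gtrsim s$. Your decay is only of order $\sqrt{s\log s}$ per unit of $n$, so $\tau=-\log\alpha/\log\beta\to 0$ and the gain disappears. The paper avoids this in two ways. First, its Siegel unknowns are the partial-fraction coefficients $c_{n,i,j}\in\Z$ themselves, with poles at $0,-N,\dots,-n$ of order $\leq a$, so the polylogarithm coefficients $Q_{n,i,p}=(-1)^p(i)_p z^{rn}P_{n,i}$ are integral polynomials. Second, the only part carrying $d_{(r+1)n}^{a+h}$-type denominators is the polynomial $V_{n,p}^{[\infty]}$ of degree $\leq (r+1)n-1$, and it is killed by differentiating $k-1\geq (r+1)n$ times in $z$. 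What remains is $\delta_{n,k}/(k-1)!$ with $\delta_{n,k}\leq (e^3(2a+1))^{\kappa n}$, so $\log\beta=O(\log a)$.

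On the analytic side, bounding $P$ by its coefficients times $(1+|t|)^{\deg P}$ gives no decay at all. The smallness comes from property $(ii)$ of Proposition~\ref{existencedescij}: Siegel's lemma also makes the Taylor coefficients $\mathfrak{A}_{n,q}$ of $F_n$ at infinity small, by a factor $r^{q-\Omega n}$ for $\omega n\leq q<\Omega n$. Here $r$ is a fixed constant, and $\Omega\approx\rho\sqrt{a\log a/N}$ plays the role of your $r$. Summing only over $t\geq rn+1$ turns this into $r^{-\Omega n}$. Outside the disk, the paper writes $t^{-q-p}$ as a Gamma integral and sums the binomial series, obtaining $\int_0^{\infty}u^{q+p-1}e^{-(rn+1)u}(z-e^{-u})^{-k}\,du$. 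This is holomorphic on $\C\setminus[0,1]$ and bounded through $g(z)^k$. Your Mellin--Barnes integral on a line $\Re t=rn+\tfrac12$ could play the same role, but only once this expansion at infinity is fed in.

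Also, with poles at every $-1,\dots,-n$, your series produces the shifted functions $L(f(\cdot-j),i,z)$, not $L(f,i,z)$. The paper restricts the poles to multiples of $N$ and recombines the rotations $S_{n,p}^{[\infty](k-1)}(\mu^{-\ell}z)$ with weights $\hat f(\ell)$. The resulting reduction to only $a(n/N+1)$ unknowns is where the factor $1/\sqrt N$ comes from.

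For assumption $(iii)$ of the criterion, no non-vanishing determinant is needed or proved. What is needed is that every $\bm x\in\K^{s+1}$ annihilating all the forms satisfies $\psi(\bm x)=0$ for a single linear form with $\psi(\bm\zeta)\neq 0$; here $\psi_m(\bm x)=x_0^{<m>}$ with $f(-m)\neq 0$. The paper argues by contradiction from such an $\bm x$ with some $x_0^{<m>}\neq 0$:
it builds solutions of the differential system with initial values $\bm x$ at $z_1$, hence functions $f_{n,p}$ vanishing to order $\geq \kappa n$ at each $\mu^{-\ell}z_1$. It twists these by powers of $\log z$ into $h+1$ independent remainders $\rho_{n,q}$ of one system; their independence uses $x_0^{<m>}\neq 0$ through a residue at a root of unity. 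It then adds the remainders $\tau_{n,u}$, of order $\geq rn$ at $0$ and $\geq -(r+1)n$ at $\infty$, and applies Shidlovskii's lemma on $\Sigma=\{\mu^\ell z_1\}\cup\{0,1,\infty\}$. So the multiplicity estimate is used at the evaluation point, not just at $0$.

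The outcome is $\big((h+1)(\kappa-r-1)N-b_n\big)n\leq O(1)$. This is contradictory only if $(h+1)(\kappa-r-1)N>a$, which fails for $h=0$ since $(\kappa-r-1)N<\omega N<a$. One therefore needs $h+1$ families of forms coming from the $t$-derivatives $F_n^{(p)}$, $0\leq p\leq h$, with $h=\nu a$; this is why $s=a+h$ and why the factor $\sqrt{1+\nu}$ appears in $0.26$. These come on top of the $(\kappa-r-1)n$ derivatives in $z$. Your proposal specifies neither family, so it contains no mechanism that could establish $(iii)$.
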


\begin{remark}
No restriction on the parity of $i$ is needed here, in contrast with the results
of~\cite{rivoal2000, nishimoto2011, fischler2018, fischler2020, chapitre1} quoted above: the special values which make such a restriction necessary occur at $z=1$ only. In particular, unlike in \cite{chapitre1}, we don't need to use the series $S_{n, p}^{[0]}$ and the polynomials $\overline{Q}_{n, 0, (p, k)}$ in \sref{Section}{partie4}. The series $S_{n, p}^{[\infty]}$ and the polynomials $Q_{n, 0, (p, k)}$ are enough.
\end{remark}

\section{Structure of the proof}
\setcounter{subsection}{1}
\label{partie2}

For any periodic function $f: \Z \to \C$, we introduce
\begin{equation*}
    L(f, i, z) := \sum_{m=1}^{+\infty} \frac{f(m)z^m}{m^i}, \;\;\;\;\;i\in\Z_{>0}, \;\;\;|z| < 1.
\end{equation*}
If $f$ is identically equal to $1$, this yields the polylogarithm functions
\begin{equation}
\label{définitiondespolylogarithmes}
    \Li_i(z) = \sum_{m=1}^{+\infty} \frac{z^m}{m^i}, \;\;\;\;\;i\in\Z_{>0}, \;\;\;|z| < 1.
\end{equation}
It is well known that the polylogarithm functions can be analytically continued to the simply connected domain $\C\setminus[1, +\infty[$, see for example \cite[§1.1]{oesterle93}. We will denote by $\Li_i(z)$ the holomorphic function on $\C\setminus[1, +\infty[$ that coincides with the series representation \eqref{définitiondespolylogarithmes} on the domain $|z| < 1$.

\begin{notations}
\label{notations}
    For the entire paper, we fix an integer $N \geq 1$ and a non-zero $N$-periodic function $f: \Z \to \C$. We let $\mu = e^{2i\pi/N}$. We denote by $\mathcal{D}$ the complex plane minus $0$ and the half-lines $[\mu^\ell, \mu^\ell \infty [$, $0\leq \ell \leq N-1$. 

    We also fix an embedding $\overline{\Q}\hookrightarrow\C$ and an algebraic point $z_1$ such that $z_0 := 1/z_1 \in\mathcal{D}$. We denote the denominator of $z_1$ by $\mathrm{den}(z_1)$. We denote by $\log$ a fixed determination of the complex logarithm on $\mathcal{U}$, where $\mathcal{U}$ is the complex plane minus an arbitrary half-line starting from $0$ and not going through $1$ nor any point $\mu^\ell z_1$, $0\leq\mu\leq N-1$.
\end{notations}

We will show in \sref{Lemma}{formulesd'inversiondeFourier} that the function $L(f, i, z)$ is a $\C$-linear combination of the functions $\Li_i(\mu^\ell z)$, $0\leq\ell\leq N-1$, on the domain $|z|<1$. Using the analytic continuation of the polylogarithms, we can thus make sense of $L(f, i, z)$ on $\mathcal{D}$. 

\begin{remark}
    Notice that if $\Q(z_0) \subset \R$, we have $$\dim_{\Q(z_0)} \mathrm{Span}_{\Q(z_0)} \Big\{ \;L(f, i, z_0) \;|\; 1\leq i\leq s \Big\} \geq \dim_{\Q(z_0)} \mathrm{Span}_{\Q(z_0)} \Big\{ \;L(\mathfrak{Re}f, i, z_0) \;|\; 1\leq i\leq s \Big\},$$ and the same holds upon replacing $\mathfrak{Re}$ by $\mathfrak{Im}$. In the case $z_0 \in \R$, we can thus assume that $f$ takes values in $\R$, and hence $L(f, i, z_0) \in \R$ for all $i\in\Z_{>0}$, without loss of generality.
\end{remark}

\smallskip
Let us now describe the structure of this paper, dedicated to the proof of \sref{Theorem}{théorèmeprincipal}. If $F$ is a rational function of degree $\leq -2$, whose poles are negative integers $\geq -r$ and of order $\leq a\in\Z_{>0}$, then the series
\begin{equation*}
    S^{[\infty]}(z) = \sum_{t=1}^{+\infty} F(t)z^{-t}
\end{equation*}
is a linear combination with polynomial coefficients of $1, \Li_1(1/z), ..., \Li_a(1/z)$ (see for example \cite[Lemme 1]{ballrivoal2001} or \cite[§4.3]{fischler2018}). If the rational function differentiated $p$ times $F^{(p)}$ is used instead, polylogarithms of order $p+1$ to $a+p$ appear instead (see \cite{nesterenko96} for the example of $\zeta(3)$, and \cite[§4]{fischlerrivoal2003} for the general case). This mechanism relies on the partial fraction decomposition of such a rational function, as it can be seen in the proof of \sref{Lemma}{lesSsontdesclenlespolylogs} below.

In \sref{Section}{partie3}, we recall the result of \cite[§3]{chapitre1}. We fix a positive integer $a\in\Z_{>0}$, we introduce rational parameters $r, \omega, \Omega$, and we obtain such a rational function $F_n$ for infinitely many integers $n$ (large enough\footnote{\label{footnote1} In a sense given by equation \eqref{nassezgrand2}.}, multiples of $N$, and such that $\frac{rn}{N}, \omega n$ and $\Omega n$ are integers). The construction of \cite[§3]{chapitre1} uses Siegel's lemma and is therefore not explicit, but the properties of these rational functions enable us to construct linear combinations to which the linear independence criterion given by \sref{Proposition}{critèred'indépendancelinéaire} will apply.

In \sref{Section}{partie4}, we introduce two new parameters $h\in\Z_{\geq 0}, \kappa\in\Q$. We denote by $\mathcal{N}$ the infinite set of integers large enough\footnoteref{footnote1}, multiples of $N$ and such that $\frac{rn}{N}, \omega n, \Omega n$ and $\kappa n$ are integer. For each $n\in\mathcal{N}$ and $p\in\llbracket 0, h\rrbracket$, we use $F_n^{(p)}$ to construct linear combinations $S_{n, p}^{[\infty]}(z)$ with coefficients in $\Q[z]$ of $1$ and $\Li_1(1/z), ..., \Li_{a+h}(1/z)$. We differentiate them $k-1$ times with respect to $z$ and we combine them in the fashion of \cite[§4.3]{fischler2018} to obtain linear combinations $\Lambda_{n, (p, k)}(z_1), \;(p, k)\in\llbracket 0, h\rrbracket\times\llbracket (r+1)n+1, \kappa n\rrbracket$ of the numbers $f(0), ..., f(N-1)$ and $L(f, i, 1/z_1), \;\;1\leq i\leq a+h$. 

In \sref{Section}{partie5}, we prove that the linear combinations $\Lambda_{n, (p, k)}(z_1)$ satisfy the assumptions $(i)$ and $(ii)$ of the linear independence criterion (\sref{Proposition}{critèred'indépendancelinéaire}), that is, their absolute value goes to $0$ at least geometrically fast with $n$, while their coefficients are algebraic integers in $\mathcal{O}_{\Q(z_1)}$ going to infinity at most geometrically fast with $n$.

In \sref{Section}{partie6}, we rely on a ``Shidlovskii's lemma" (recalled in \cite[Theorem 6.1.1]{chapitre1}) to prove that the linear combinations $\Lambda_{n, (p, k)}(z_1)$ satisfy, under some assumption on the parameters $\omega, r, h$ and $\kappa$, the assumption $(iii)$ of the linear independence criterion for some linear forms $\psi_\ell$. To this end, we will need to consider a differential system which has a singularity at every $N$-th root of unity.

In \sref{Section}{partie7}, we state a linear independence criterion (\sref{Proposition}{critèred'indépendancelinéaire}). We then fix some numerical values for the parameters $r, \omega, \kappa$, some values depending on $a$ for the parameters $\Omega$ and $h$. We then apply the linear independence criterion with one of the linear forms $\psi_\ell$, for a fixed $a$. Finally, we give the asymptotics with $a\to +\infty$ of the result we obtain.

\section{A sequence of non explicit rational functions}
\label{partie3}
\setcounter{subsection}{1}

Let $a\geq N+1$ be a fixed integer. \sref{Proposition}{existencedescij} below constructs a rational function $F_n\in\Q(t)$ for infinitely many multiples $n$ of $N$. The rational function $F_n$ is of degree $\leq -2$ and its poles are among $0, -N, -2N, ..., -n$ and of order at most $a$. It is thus characterized by the coefficients $c_{n, i, j}$ of its partial fraction decomposition, $(i, j)\in\llbracket 1, a\rrbracket\times\llbracket 0, n/N\rrbracket$. We also introduce the coefficients $\mathfrak{A}_{n,k}$ of its Taylor expansion at $+\infty$. These coefficients all depend on $a$ and on the parameters $\omega, \Omega$ and $r$ defined below, but we ignore this in the notation. We have
\begin{equation}
\label{DESdefn}
    F_n(t) = \sum_{i=1}^a\sum_{j=0}^{n/N} \frac{c_{n, i, j}}{(t+Nj)^i} = \sum_{k=1}^{+\infty} \frac{\mathfrak{A}_{n,k}}{t^k}.
\end{equation}

\begin{notation}
\label{notationdketdeltaak}
    We denote by $d_k$ the $\mathrm{lcm}$ of all integers from $1$ to $k$, and by $\Delta_{a, k}$ the $\mathrm{lcm}$ of all products of $a$ distinct integers taken in an interval $I\subseteq \llbracket -k, k\rrbracket$ of length at most $k$.

    According to \cite[Lemma 2]{fischler2026}, they satisfy as $k$ goes to infinity:
    \begin{equation}
    \label{estimationexplicitedketdeltaak}
        d_k \leq e^{k+o(k)}, \hspace{1.5cm} \Delta_{a, k} \leq (a+1)^{k}e^{\gamma k + o(k)}.
    \end{equation}
\end{notation}

The following proposition states the existence of rational functions that are suitable for our purposes in the following sections. Its proof is the same as \cite[Proposition 3.1.1]{chapitre1}, upon using the asymptotic estimations \eqref{estimationexplicitedketdeltaak} instead of the effective global estimations \cite[(3.7)]{chapitre1} where ``$k+o(k)$" is replaced by ``$1.04k$". This proof uses a version of Siegel's lemma as stated in \cite[Lemma 1]{fischler2026}.

\begin{proposition}
\label{existencedescij}
    Let $a\geq N+1$ be an integer. Let $\omega, \Omega, r \in\Q$ be parameters satisfying $1\leq\omega\leq\Omega<\frac{a}{N}$ and $r\geq 1$. Then for any $n\in\Z_{>0}$ such that $\omega n, \Omega n, \frac{n}{N}, \frac{rn}{N}$ are integers, there exist coefficients $c_{n, i, j}\in\Z$ not all zero such that
    \begin{small}
    \begin{enumerate}
        \item[$(i)$] $F_n(t) \stackunder{$=$}{$\scriptscriptstyle{t\to+\infty}$} \mathcal{O}(t^{-\omega n})$,
        \item[$(ii)$] $|\mathfrak{A}_{n,k}| \stackunder{$\leq$}{$\scriptscriptstyle{n\to+\infty}$} \begin{cases}  r^{k-\Omega n}n^k k^a \xi^{n+o(n)} &\text{if $\omega n \leq k < \Omega n$,} \\ n^k k^a \xi^{n+o(n)} &\text{if $k \geq \Omega n$,} \end{cases}$ where the sequences $o(n)$ do not depend on $k$.  
        \item[$(iii)$] $\forall i\in\llbracket 1, a\rrbracket \;\forall j\in\llbracket 0, n/N \rrbracket, \; |c_{n, i, j}| \stackunder{$\leq$}{$\scriptscriptstyle{n\to+\infty}$} \xi^{n+o(n)}$, where $$\xi: = \exp\Bigg(\frac{\omega\log(2)+2\omega^2 + \omega^2\log(a+1)+\frac{1}{2}\Omega^2\log(r)}{\frac{a}{N}-\omega}\Bigg).$$
        \end{enumerate}
        \end{small}
\end{proposition}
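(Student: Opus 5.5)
We will apply the refined Siegel lemma of \cite[Lemma 1]{fischler2026} to the unknowns $c_{n,i,j}\in\Z$, $(i,j)\in\llbracket 1,a\rrbracket\times\llbracket 0,n/N\rrbracket$. There are $a(n/N+1)$ of them, that is $\frac{a}{N}n+O(1)$. The linear constraints are the vanishing of $\mathfrak{A}_{n,k}$ for $1\leq k<\omega n$. Expanding each term at $t=+\infty$ gives
\[
\frac{1}{(t+Nj)^i}=\sum_{k\geq i}\binom{k-1}{i-1}(-Nj)^{k-i}t^{-k},
\]
so that
\[
\mathfrak{A}_{n,k}=\sum_{i,j}c_{n,i,j}\binom{k-1}{i-1}(-Nj)^{k-i}.
\]
Condition $(i)$ is therefore a system of $\omega n-1$ equations with integer coefficients of size at most $2^k n^k$ roughly. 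Since $\omega<a/N$, there are more unknowns than equations by a margin $(\frac aN-\omega)n$, and this margin is the denominator appearing in $\log\xi$.

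To reach the stated $\xi$ rather than the crude Siegel bound, I will follow \cite[Proposition 3.1.1]{chapitre1}. First, I will rescale the $k$-th equation by a suitable rational factor so that its coefficients become small integers. The natural choice is to rewrite the coefficients in terms of quantities bounded using $d_k$ and $\Delta_{a,k}$: for instance $(k-1)!/((i-1)!\,\cdots)$-type ratios, or evaluations of the partial fraction structure at integer points. This normalization accounts for the contributions $\omega\log 2$, $2\omega^2$ and $\omega^2\log(a+1)$, which come from summing $\log$ of the row norms over $k<\omega n$ together with \eqref{estimationexplicitedketdeltaak}, namely $\log d_k\leq k+o(k)$ and $\log\Delta_{a,k}\leq k\log(a+1)+\gamma k+o(k)$. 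Second, the refined Siegel lemma also lets us impose weighted bounds on the further linear forms $\mathfrak{A}_{n,k}$ for $\omega n\leq k<\Omega n$, namely $|\mathfrak{A}_{n,k}|\leq r^{k-\Omega n}n^kk^a\xi^n$. These forms enter the lemma as ``soft'' constraints with allowed sizes. Summing $\log r^{\Omega n-k}$ over $k\in[\omega n,\Omega n)$ gives at most $\frac12\Omega^2n^2\log r$, which produces the term $\frac12\Omega^2\log r$. Here the integrality of $\omega n$, $\Omega n$ and $rn/N$ is used to make the index ranges and weights exact.

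The Siegel lemma then yields a nonzero integer solution with $\max|c_{n,i,j}|\leq \xi^{n+o(n)}$, which is $(iii)$. The hard constraints give $(i)$, and the soft constraints give $(ii)$ in the range $\omega n\leq k<\Omega n$. For $k\geq\Omega n$, I will bound $\mathfrak{A}_{n,k}$ directly from the formula above:
\[
|\mathfrak{A}_{n,k}|\leq a\Bigl(\tfrac nN+1\Bigr)\max|c|\,k^{a}n^{k}\leq n^kk^a\xi^{n+o(n)},
\]
uniformly in $k$, since $\binom{k-1}{i-1}\leq k^{a}$ and $(Nj)^{k-i}\leq n^{k}$. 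The same crude estimate confirms that the $o(n)$ terms do not depend on $k$.

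The main obstacle is the bookkeeping in the Siegel lemma. The constant is determined by the product of the norms of the equations divided by the excess of unknowns, and the exponents must be matched exactly. In particular, the divisibility by $\Delta_{a,k}$ must be used to clear denominators without losing more than $e^{(\log(a+1)+\gamma)k}$ per row. Since the paper states this step is identical to \cite{chapitre1}, with asymptotic rather than effective estimates, I will reproduce that computation and only replace the factor $1.04k$ by $k+o(k)$ everywhere.
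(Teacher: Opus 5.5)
Your overall architecture is the right one: Siegel's lemma of \cite[Lemma 1]{fischler2026} with exact constraints for $k<\omega n$, relative inequality constraints for $\omega n\le k<\Omega n$ costing $\sum_{\omega n\le k<\Omega n}(\Omega n-k)\log r\le\frac12\Omega^2n^2\log r$, and the trivial bound $|\mathfrak{A}_{n,k}|\le a(\frac nN+1)k^an^k\max|c_{n,i,j}|$ for $k\ge\Omega n$. However, the step that actually produces $\xi$ fails as you describe it.

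The equations $\mathfrak{A}_{n,k}=\sum_{i,j}c_{n,i,j}\binom{k-1}{i-1}(-Nj)^{k-i}=0$ cannot be given small integer coefficients by rescaling each row. In row $k$, the entries at $j=n/N$ and $j=1$ (same $i$) have ratio $(n/N)^{k-i}$. So every nonzero multiple of that row lying in $\Z^{a(n/N+1)}$ has an entry of absolute value at least $(n/N)^{k-1}$. Summing logarithms over $k<\omega n$ gives about $\frac{\omega^2}{2}n^2\log n$. After dividing by the excess $(\frac aN-\omega)n$, you only get $\max|c_{n,i,j}|\le n^{Cn}$, not $\xi^{n+o(n)}$. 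Your rescaled system also has no source for the factor $2^n$ per row that produces the term $\omega\log 2$. The solution lattice does not depend on which equations you choose. But the bound in Siegel's lemma is expressed through the sizes of the rows, so you must exhibit a basis of the row space made of small integer rows.

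That change of basis is the missing idea, and it is what the polynomials $P_{n,k,i}$ of \eqref{definitiondesPki} are for. One has $\sum_{i}P_{n,i}(e^{-u})\frac{u^{i-1}}{(i-1)!}=\sum_{k}\mathfrak{A}_{n,k}\frac{u^{k-1}}{(k-1)!}$. Hence $(i)$ is equivalent to $G(z):=\sum_iP_{n,i}(z)\frac{(-\log z)^{i-1}}{(i-1)!}$ vanishing at $z=1$ to order at least $\omega n-1$. The recurrence \eqref{definitiondesPki} gives $G^{(k-1)}(z)=\sum_iP_{n,k,i}(z)\frac{(-\log z)^{i-1}}{(i-1)!}$, so $(i)$ is equivalent to $P_{n,k,1}(1)=0$ for $1\le k<\omega n$.

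By Proposition~\ref{propositionexistenceetestimationdestheta}, multiplying the $k$-th of these equations by $\frac{d_k\Delta_{a,\max(k,n)}}{(k-1)!}$ yields integer coefficients bounded by $k^a2^nd_k\Delta_{a,\max(k,n)}$. Sum the logarithms over $k<\omega n$, using \eqref{estimationexplicitedketdeltaak} and $\sum_{k<\omega n}\max(k,n)\le\omega^2n^2$ (this is where $\omega\ge1$ is needed). The total is at most $n^2\big(\omega\log2+(\tfrac12+\gamma)\omega^2+\omega^2\log(a+1)\big)+o(n^2)$, which accounts for the terms $\omega\log2+2\omega^2+\omega^2\log(a+1)$ in $\xi$. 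With this system of hard constraints in place of rescaled rows, the rest of your argument goes through.
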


In the course of the proof of this proposition, a crucial use of the rational functions
\begin{equation}
\label{definitiondesPki}
    P_{n, i}(z): = \sum_{j=0}^{n/N} c_{n, i, j}z^{Nj}, \hspace{1cm}
    \begin{cases}
        P_{n, 1, i} = P_{n, i}, \\
        P_{n, k+1, i} = P_{n, k, i}' - \frac{1}{z}P_{n, k, i+1},
    \end{cases} \hspace{1cm} 1\leq i\leq a,
\end{equation}
with the convention $P_{n, k, a+1} = 0$, is made. The polynomials $P_{n, i}$ will naturally appear in the proof of \sref{Lemma}{lesSsontdesclenlespolylogs}. In \autoref{partie5}, we will also use the rational functions
\begin{equation}
\label{recurrencedesPk0}
    \begin{cases}
        P_{n, 1, 0} = 0, \\
        P_{n, k+1, 0} = P_{n, k, 0}' + \frac{1}{z(1-z)}P_{n, k, 1}.
    \end{cases}
\end{equation}

The motivation behind the recurrence relations \eqref{definitiondesPki} and \eqref{recurrencedesPk0} is that any solution $\prescript{t}{}{\big(}y_0(z),\break y_1(z), ..., y_a(z)\big)$ of the differential system $Y' = AY$, where $A\in M_{a+1}(\Q(z))$ is given by 
\begin{equation}
\label{systemediffdespolylogarithmes}
    A = \begin{bmatrix}
        0 &0 &0 &\hdots &0 &0 \\
        \frac{1}{z(1-z)} &0 &0 &\hdots &0 &0 \\
        0 &\frac{-1}{z} &0 &\hdots &0 &0 \\
        0 &0 &\frac{-1}{z} &\hdots &0 &0 \\
        \vdots &\vdots &\vdots &\ddots &\vdots &\vdots \\
        0 &0 &0 &\hdots &\frac{-1}{z} &0
    \end{bmatrix},
\end{equation}
satisfies
\begin{equation*}
    \Bigg(\frac{d}{dz}\Bigg)^{k-1} \sum_{i=0}^a P_{n, i}(z)y_i(z) = \sum_{i=0}^a P_{n, k, i}(z)y_i(z).
\end{equation*}
Notice that the vector $\prescript{t}{}{\big(}1, \Li_1(1/z), ..., Li_a(1/z)\big)$ is a solution of this differential system.

\smallskip
The following proposition corresponds to \cite[Proposition 3.2.2]{chapitre1} and \cite[Proposition 3.4.1]{chapitre1}. It will be used in \autoref{paragrapheestimationdescoefficients} below.

\begin{proposition}
\label{propositionexistenceetestimationdestheta}
    Let $a \geq N+1$ be an integer. Let $n\in\Z_{>0}$. There exist rational coefficients $\theta_{a, n, k, i, j, \ell} \in \Q$ and $\vartheta_{a, n, k, 0, j, \ell, t}\in\Q$ such that for any choice of numbers $c_{n, i, j}$, the rational functions $P_{n, k, i}$ defined by the recurrence \eqref{definitiondesPki} and \eqref{recurrencedesPk0} satisfy for $k\geq 1$:
    \begin{small}
    \begin{equation}
    \label{definitiondesthetakijij}
        z^{k-1}P_{n, k, i}(z) = \sum_{j=0}^{n/N} \Bigg( \sum_{\ell=0}^{a-i} \theta_{a, n, k, i, j, \ell}\; c_{n, i+\ell, j}\Bigg) z^{Nj}, \hspace{1.5cm} 1\leq i\leq a.
    \end{equation}
    \end{small}
    and
    \begin{align}
    \label{expressiondesP0enfonctiondestheta}
            z^{k-1}(1-z)^{k-1}P_{n, k, 0}(z) = \sum_{t=0}^{n+k-1} \Bigg(\sum_{\ell=0}^{a-1} \sum_{j=0}^{n/N} \vartheta_{a, n, k, 0, j, \ell, t} \;c_{n, 1+\ell, j} \Bigg) z^t.
    \end{align}
    These coefficients satisfy
    \begin{enumerate}
        \item[$(i)$] $|\theta_{a, n, k, i, j, \ell}|\leq k^a2^n(k-1)!$, \hspace{1.6cm} $(i_0)\;\;|\vartheta_{a, n, k, 0, j, \ell, t}| \leq k^{a+1}8^{\max(k, n)}(k-1)!,$
        \item[$(ii)$] $\frac{d_k\Delta_{a, \max(k, n)}}{(k-1)!}\theta_{a, n, k, i, j, \ell} \in \mathbb{Z}$, \hspace{1.7cm} $(ii_0)\;\;\frac{d_k^2\Delta_{a, \max(k, n)}}{(k-1)!}\vartheta_{a, n, k, 0, j, \ell, t}\in\mathbb{Z}.$
    \end{enumerate}
\end{proposition}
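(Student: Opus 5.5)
We prove both formulas by induction on $k$ from the recurrences \eqref{definitiondesPki} and \eqref{recurrencedesPk0}. Since the $c_{n,i,j}$ are arbitrary, we treat them as indeterminates and track coefficients of $c_{n,i+\ell,j}z^{Nj}$.

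\textbf{Formula \eqref{definitiondesthetakijij}.} Put $Q_{k,i}:=z^{k-1}P_{n,k,i}$. Multiplying the recurrence by $z^{k}$ gives
\begin{equation*}
Q_{k+1,i} = zQ_{k,i}' - (k-1)Q_{k,i} - Q_{k,i+1}.
\end{equation*}
The operator $z\frac{d}{dz}$ acts diagonally on $z^{Nj}$ with eigenvalue $Nj$. Hence the induction closes with
\begin{equation*}
\theta_{k+1,i,j,\ell} = (Nj-k+1)\,\theta_{k,i,j,\ell} - \theta_{k,i+1,j,\ell-1}, \qquad \theta_{1,i,j,\ell}=\delta_{\ell,0}.
\end{equation*}
Unfolding this, $\theta_{k,i,j,\ell}$ is $(-1)^\ell$ times a complete homogeneous symmetric polynomial of degree $k-1-\ell$ in the $k$ numbers $Nj, Nj-1,\dots,Nj-k+1$. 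Up to sign it is also the coefficient in the partial-fraction/Taylor expansion of $\prod_{m=0}^{k-1}(x-Nj+m)^{-1}$, i.e. the $\ell$-th derivative of $(Nj-m)$-type falling factorials divided by $\ell!$. This gives the closed form
\begin{equation*}
\theta_{k,i,j,\ell} = \frac{(-1)^\ell}{\ell!}\,\frac{d^\ell}{dx^\ell}\Big[x(x-1)\cdots(x-k+2)\Big]_{x=Nj}
\end{equation*}
(shift conventions to be checked). The bounds then follow as below.

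\textbf{Bound $(i)$.} Bound the derivative of the falling factorial by a sum over choices of $\ell$ omitted factors. This gives at most $\binom{k-1}{\ell}\le k^a$ terms when $\ell\le a$. Each product of factors is at most $\prod|Nj-m|$. When $Nj\le n$, binomial-type estimates give $\prod_{m}|Nj-m|/(k-1)!\le \binom{\max}{\cdot}\le 2^{n}$ (roughly $\binom{Nj}{k-1}$ or $\binom{k-1}{Nj}$ behaviour). This yields $k^a2^n(k-1)!$.

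\textbf{Integrality $(ii)$.} Write $\frac{1}{(k-1)!}\frac{d^\ell}{dx^\ell}$ of the falling factorial as $\binom{Nj}{k-1}$ times a sum of products of $\ell$ reciprocals $1/(Nj-m)$. Clearing these denominators uses $d_k$ for the single-reciprocal part and $\Delta_{a,\max(k,n)}$ for products of $\le a$ distinct integers in an interval of length $\le \max(k,n)$. I would follow the standard argument of \cite[Lemma 2]{fischler2026} / \cite{chapitre1} here.

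\textbf{Formula \eqref{expressiondesP0enfonctiondestheta}.} Set $R_k:=z^{k-1}(1-z)^{k-1}P_{n,k,0}$. Then
\begin{equation*}
R_{k+1} = z(1-z)R_k' - (k-1)(1-2z)R_k + z^{k-1}(1-z)^{k-1}P_{n,k,1}.
\end{equation*}
The last term equals $(1-z)^{k-1}Q_{k,1}$, already known in terms of the $\theta$. The degree grows by at most one per step, which gives the range $t\le n+k-1$.

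Rather than iterate, I would solve explicitly. Let $D$ denote the operator $g\mapsto z(1-z)g'-(k-1)(1-2z)g$ relative to the weight $z^{k-1}(1-z)^{k-1}$. Then
\begin{equation*}
R_k=\sum_{m=1}^{k-1} z^{k-1}(1-z)^{k-1}\Big(\frac{d}{dz}\Big)^{k-1-m}\Big[\frac{P_{n,m,1}}{z(1-z)}\Big].
\end{equation*}
Expand $\frac{1}{z(1-z)}=\frac1z+\frac1{1-z}$ and apply Leibniz. The $\vartheta$ become sums of products of the $\theta_{m,1,j,\ell}$ with binomial coefficients and falling factorials of size at most $(k-1)!\,4^{\max(k,n)}$. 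Since there are $k$ terms in $m$, this gives $(i_0)$ with base $8$.

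Integrality $(ii_0)$ costs one extra factor $d_k$, from the derivatives of $1/(1-z)$ divided by factorials combined with $\theta_{m}$ at smaller $m$ ($(m-1)!$ versus $(k-1)!$).

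\textbf{Main obstacle.} I expect the main obstacle to be the bookkeeping for $\vartheta$, namely getting both $8^{\max(k,n)}$ and $d_k^2$ simultaneously. I would first try to copy the corresponding computation of \cite[Proposition 3.4.1]{chapitre1} verbatim, since the recurrences coincide.
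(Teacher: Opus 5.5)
Your set-up is sound. The recurrence $\theta_{k+1,j,\ell}=(Nj-k+1)\theta_{k,j,\ell}-\theta_{k,j,\ell-1}$ is correct, and it says that $\sum_\ell\theta_{k,j,\ell}\,y^\ell=\prod_{m=0}^{k-2}(Nj-m-y)$. So $\theta_{k,j,\ell}=(-1)^\ell e_{k-1-\ell}(Nj,Nj-1,\dots,Nj-k+2)$ is an \emph{elementary} symmetric function of $k-1$ numbers, which agrees with your derivative formula. The description as a complete homogeneous polynomial in $k$ numbers (or via the expansion of $\prod_m(x-Nj+m)^{-1}$) is wrong and should be dropped.

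The real gap is in the integrality $(ii)$. Writing the coefficient as $\binom{Nj}{k-1}\sum_{|T|=\ell}\prod_{m\in T}(Nj-m)^{-1}$ is legitimate only when $Nj\geq k-1$, and in that case no $d_k$ is needed at all. The problem is the case $0\leq Nj\leq k-2$, which is essentially the only one occurring in Section~\ref{partie5}, where $n$ is replaced by $(r+1)n$ and $k\geq (r+1)n+1$. There $Nj$ is a root of the falling factorial, $\binom{Nj}{k-1}=0$, and one of your reciprocals is $1/0$. Only subsets avoiding $m=Nj$ survive, and each term of $\theta_{k,j,\ell}/(k-1)!$ equals
\[
\pm\frac{(Nj)!\,(k-2-Nj)!}{(k-1)!}\prod_{m\in T}\frac{1}{Nj-m},\qquad T\subseteq\{0,\dots,k-2\},\quad |T|=\ell-1,\quad Nj\notin T .
\]
The missing ingredient is the following identity, valid for $0\leq s\leq k-2$:
\[
\frac{s!\,(k-2-s)!}{(k-1)!}=\int_0^1 x^{s}(1-x)^{k-2-s}\,dx=\sum_{i=0}^{k-2-s}\binom{k-2-s}{i}\frac{(-1)^i}{s+i+1}\in\frac{1}{d_{k-1}}\Z .
\]
This is exactly where $d_k$ enters. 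The same remark applies to $(i)$: you must bound the product over $m\neq Nj$, by $(Nj)!(k-2-Nj)!\leq (k-2)!$, not the full product, which vanishes.

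For $\vartheta$, your closed formula for $R_k$ is correct, and the same identity with $s=m-1$ produces the second factor $d_k$. Put $\Theta_{m,j}=\sum_\ell\theta_{m,1,j,\ell}c_{n,1+\ell,j}$. Leibniz applied to $\Theta_{m,j}\,z^{Nj-m}(1-z)^{-1}$ gives
\[
R_k=\sum_{m=1}^{k-1}\sum_{j}\Theta_{m,j}\,(k-1-m)!\sum_{u=0}^{k-1-m}\binom{Nj-m}{u}z^{Nj+k-1-m-u}(1-z)^{m+u-1},
\]
and then
\[
\frac{(k-1-m)!}{(k-1)!}\,\theta_{m,1,j,\ell}=\frac{\theta_{m,1,j,\ell}}{(m-1)!}\cdot\frac{(m-1)!\,(k-1-m)!}{(k-1)!}.
\]

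Your explanation of the base $8$ is incorrect: the $k$ values of $m$ only cost a polynomial factor. The three factors $2$ come from three places:
\begin{enumerate}
\item[$\bullet$] the bound $|\theta_{m,1,j,\ell}|\leq m^a2^n(m-1)!$;
\item[$\bullet$] the bound $\big|\binom{Nj-m}{u}\big|\leq 2^{\max(k,n)}$, where $Nj-m$ may be negative and $\big|\binom{-y}{u}\big|=\binom{y+u-1}{u}$;
\item[$\bullet$] the coefficients of $(1-z)^{m+u-1}$, which are at most $2^{k}$.
\end{enumerate}
The exponent is $a+1$ rather than the $a+2$ produced by the two sums over $m$ and $u$, because $(m-1)!(k-1-m)!\leq (k-2)!=(k-1)!/(k-1)$.
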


\section{Construction of the linear combinations}
\label{partie4}

We fix an integer $a \geq 3N$ and rational parameters $r, \omega$ and $\Omega$ satisfying $2 < r+1 < \omega \leq \Omega < \frac{a}{N}$.
We also fix an integer parameter $h\in\llbracket 0, a\rrbracket$ and a rational parameter $\kappa$ satisfying $r+1 < \kappa < \omega$. We denote by $\mathcal{N}$ the infinite set of integers $n\geq 2$ large enough (in a sense given by equation \eqref{nassezgrand2}) and such that $\omega n, \Omega n, \frac{n}{N}, \frac{rn}{N}$ and $\kappa n$ are all integers.

In this section, we fix $n\in\mathcal{N}$ and we construct complex numbers $\Lambda_{n, (p, k)}(z_1)$, ${0\leq p\leq h}$, $(r+1)n+1 \leq k \leq \kappa n$, that are linear combinations with coefficients in $\Q(z_1)$ of the numbers $f(0), ..., f(N-1), L(f, 1, 1/z_1), ..., L(f, a+h, 1/z_1)$, where $z_1$ is defined in \sref{Notations}{notations}. We will see in \autoref{partie5} that these linear combinations actually have coefficients in $\mathcal{O}_{\Q(z_1)}$.

\smallskip
In \autoref{soussection4.1}, we construct for $p\in\llbracket 0, h\rrbracket$ linear combinations $S_{n, p}^{[\infty]}(z)$ with coefficients in $\Q[z]$ of the functions $1$ and $\Li_i(1/z)$, $1\leq i\leq a+h$.

In \autoref{paragrapheprocessusdedérivation}, we differentiate them with respect to $z$ to obtain for $k\in\llbracket (r+1)n+1, \kappa n\rrbracket$ linear combinations $S_{n, p}^{[\infty](k-1)}(z)$ with coefficients $\Q(z)$ of the functions $1$ and $\Li_i(1/z)$, $1\leq i\leq a+h$.

In \autoref{soussection4.3}, we combine the quantities $S_{n, p}^{[\infty](k-1)}(\mu^\ell z)$, $0\leq\ell\leq N-1$, to obtain functions $\widetilde{\Lambda}_{n, (p, k)}(z)$, that are linear combinations with coefficients in $\Q(z)$ of the constant functions $f(0), ..., f(N-1)$ and the functions $L(f, i, 1/z)$, $1\leq i\leq a+h$.

In \autoref{paragraphenotationspourlasuite}, we evaluate at the algebraic point $z=z_1$ and we multiply by an arithmetical factor and a power of $\den(z_1)$ to obtain numbers $\Lambda_{n, (p, k)}(z_1)$ that are linear combinations with coefficients in $\mathcal{O}_{\Q(z_0)}$ of the numbers $f(0), ..., f(N-1), L(f, i, 1/z_1), \;\;1\leq i\leq a+h$. We then introduce a few notation to improve clarity for \sref{sections}{partie5},\sref{}{partie6} and \sref{}{partie7}.

\subsection{Linear combinations in polylogarithms}
\label{soussection4.1}

\smallskip
Let $F_n$ be the rational function whose coefficients $c_{n, i, j}$ of the partial fraction decomposition \eqref{DESdefn} are given by \sref{Proposition}{existencedescij}. We consider for $p\in\llbracket 0, h\rrbracket$ the following series. It converges for $|z|> 1$, since $\mathrm{deg}\big(F_n^{(p)}\big) \leq -\omega n -p \leq -2$:
\begin{equation}
\label{définitiondessériesS}
    S_{n, p}^{[\infty]}(z) := z^{rn}\sum_{t=rn+1}^{+\infty} F_n^{(p)}(t)z^{-t}, \hspace{1.5cm} 0\leq p\leq h.
\end{equation}


The following lemma extends \cite[Lemma 4]{fischler2026}. It shows that these series are linear combinations of $1$ and the polylogarithms, with coefficients in $\Q[z]$:

\begin{lemma}
\label{lesSsontdesclenlespolylogs}
    There exist polynomials with rational coefficients $V_{n, p}^{[\infty]}$ of degree at most $(r+1)n-1$ and $Q_{n, i, p}, \;\;1\leq i\leq a+h,$ of degree at most $(r+1)n$ such that
    \begin{equation}
         \label{sériescommedesclenlespolylogarithmesbonscoefficients}
            S_{n, p}^{[\infty]}(z) = V_{n, p}^{[\infty]}(z) + \sum_{i=1}^{a+h} Q_{n, i, p}(z)\Li_i(1/z), \hspace{1.6cm}|z| > 1.
    \end{equation}
\end{lemma}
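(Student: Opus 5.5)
We would proceed by partial fractions, as in the proofs of \cite[Lemme 1]{ballrivoal2001} and \cite[Lemma 4]{fischler2026}. Differentiating the decomposition \eqref{DESdefn} $p$ times gives
\begin{equation*}
F_n^{(p)}(t) = \sum_{i=1}^a\sum_{j=0}^{n/N} c_{n,i,j}\,(-1)^p\frac{(i+p-1)!}{(i-1)!}\,\frac{1}{(t+Nj)^{i+p}},
\end{equation*}
so $F_n^{(p)}$ is a combination of the simple elements $(t+Nj)^{-i'}$ with $p+1\leq i'\leq a+p\leq a+h$ and rational coefficients. For $|z|>1$ every inner sum converges absolutely as soon as $i'\geq 2$, and then we may exchange the finite and infinite sums freely. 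When $p=0$ and $i'=1$ we would avoid this issue by keeping the whole expression together: since $\deg F_n\leq -2$, we have $\sum_j c_{n,1,j}=0$, and we then rearrange the convergent series $\sum_t F_n(t)z^{-t}$ term by term. One could also simply first treat $|z|>1$ with absolute convergence of each piece, because $\sum_t z^{-t}/(t+Nj)$ already converges for $|z|>1$.

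For fixed $(i',j)$ we shift the index $m=t+Nj$:
\begin{equation*}
z^{rn}\sum_{t=rn+1}^{+\infty}\frac{z^{-t}}{(t+Nj)^{i'}} = z^{rn+Nj}\sum_{m=rn+Nj+1}^{+\infty}\frac{z^{-m}}{m^{i'}} = z^{rn+Nj}\Li_{i'}(1/z) - \sum_{m=1}^{rn+Nj} \frac{z^{rn+Nj-m}}{m^{i'}}.
\end{equation*}
Since $Nj\leq n$, the factor $z^{rn+Nj}$ has degree at most $(r+1)n$. The finite sum is a polynomial in $z$ with rational coefficients and degree at most $rn+Nj-1\leq (r+1)n-1$. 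Here we use that $rn/N$ and $n/N$ are integers, so that $rn$ and $Nj$ are integers.

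Summing over $(i,j)$ with the coefficients above gives the result. We set
\begin{equation*}
Q_{n,i',p}(z)=(-1)^p\frac{(i'-1)!}{(i'-p-1)!}\sum_{j} c_{n,i'-p,j}\,z^{rn+Nj}
\end{equation*}
for $p+1\leq i'\leq a+p$, and $Q_{n,i',p}=0$ otherwise. $V_{n,p}^{[\infty]}$ collects the negatives of the finite sums. The coefficients are rational because $c_{n,i,j}\in\Z$, and the degree bounds follow from the previous paragraph. This proves \eqref{sériescommedesclenlespolylogarithmesbonscoefficients} on $|z|>1$, where $\Li_{i'}(1/z)$ is given by the series \eqref{définitiondespolylogarithmes}.

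The only delicate point is the convergence justification for the terms with $i'=1$ when $p=0$. It is harmless for $|z|>1$, since each of those series converges individually there, so all rearrangements are finite linear combinations of convergent series.
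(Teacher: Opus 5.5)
Your proof is correct and is essentially the paper's argument: you differentiate the partial fraction decomposition $p$ times, shift $t \leftarrow t+Nj$, and split $\Li_{i+p}(1/z)$ into its tail and its first $rn+Nj$ terms, which gives exactly the paper's $Q_{n,i+p,p}(z) = (-1)^p(i)_p z^{rn}P_{n,i}(z)$ and $V_{n,p}^{[\infty]}$ with the same degree bounds. Your caveat about $i'=1$ is unnecessary: for $|z|>1$, every series $\sum_t z^{-t}/(t+Nj)^{i'}$ converges absolutely because of the geometric factor $z^{-t}$, so the finite sums over $(i,j)$ can be interchanged with the sum over $t$ without any appeal to $\sum_j c_{n,1,j}=0$.
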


    The proof is as follows. By differentiating $p$ times equation \eqref{DESdefn}, we get
    \begin{equation}
    \label{dérivéepièmdelafractionrationnelle}
        F_n^{(p)}(t) = \sum_{i=1}^a \sum_{j=0}^{n/N} \frac{c_{n, i, j}(-1)^p(i)_p}{(t+Nj)^{i+p}}.
    \end{equation}

    We compute for $|z| > 1$:
    \begin{align*}
        S_{n, p}^{[\infty]}(z) &= z^{rn}\sum_{t=rn+1}^{+\infty} \sum_{i=1}^a \sum_{j=0}^{n/N} \frac{c_{n, i, j}(-1)^p (i)_p}{(t+Nj)^{i+p}}z^{-t} \\
                        &= \sum_{i=1}^a (-1)^p(i)_pz^{rn} \sum_{j=0}^{n/N} c_{n, i, j} \sum_{t=rn+1}^{+\infty} \frac{z^{-t}}{(t+Nj)^{i+p}} \\
                        &= \sum_{i=1}^a (-1)^p(i)_pz^{rn} \sum_{j=0}^{n/N} c_{n, i, j}z^{Nj} \sum_{t=rn+Nj+1}^{+\infty} \frac{z^{-t}}{t^{i+p}} &(t \xleftarrow{} t + Nj) \\  
                        &= \sum_{i=1}^a (-1)^p (i)_pz^{rn} \sum_{j=0}^{n/N} c_{n, i, j} z^{Nj}  \Bigg[ \Li_{i+p}(1/z) - \sum_{t=1}^{rn+Nj} \frac{z^{-t}}{t^{i+p}} \Bigg] \\
                        &= V_{n, p}^{[\infty]}(z) + \sum_{i=1}^a (-1)^p(i)_pz^{rn}P_{n, i}(z)\Li_{i+p}(1/z),
    \end{align*}
    where the polynomials $P_{n, i}$ are defined by \eqref{definitiondesPki}. The substitution $t \xleftarrow[]{} rn + Nj - t$ yields
    \begin{equation*}
        V_{n, p}^{[\infty]}(z) :=  -\sum_{i=1}^a (-1)^p(i)_p \sum_{j=0}^{n/N} c_{n, i, j} \sum_{t=0}^{rn+Nj-1}\frac{z^{t}}{(rn+Nj-t)^{i+p}}.
    \end{equation*}
    
    It is convenient to introduce the polynomials
    \begin{equation}
    \label{définitiondesQip}
        Q_{n, i+p, p}(z) := (-1)^p(i)_pz^{rn}P_{n, i}(z), \;\;1\leq i\leq a, \;\;\;\;\;\;\;\;\;\;\;\; Q_{n, i, p} = 0, \;\; i\notin \llbracket p+1, a+p \rrbracket,
    \end{equation}
    so that the above result can be rewritten as
    \begin{equation*}
            S_{n, p}^{[\infty]}(z) = V_{n, p}^{[\infty]}(z) + \sum_{i=1}^{a+h} Q_{n, i, p}(z)\Li_{i}(1/z), \hspace{1.6cm} |z| > 1.
    \end{equation*}

\begin{notations}
This lemma establishes equality \eqref{sériescommedesclenlespolylogarithmesbonscoefficients} on the domain $|z| > 1$. However, the polylogarithm functions $\Li_i(z)$, $i\geq 1$, can be analytically continued to the whole domain $\C\setminus [1, +\infty[$ (see for example \cite[1.1]{oesterle93}). The right-hand side of \eqref{sériescommedesclenlespolylogarithmesbonscoefficients} thus defines a holomorphic function on $\C\setminus [0, 1]$, which coincides with the series $S_{n, p}^{[\infty]}$ on the domain $|z| > 1$. 

From now on, we denote this function by $S_{n, p}^{[\infty]}$ on the whole domain $\C \setminus [0, 1]$, keeping in mind that the series representation \eqref{définitiondessériesS} is only valid for $|z| > 1$.
\end{notations}

\subsection{Differentiation process}
\label{paragrapheprocessusdedérivation}
Following \cite[section 4.1]{fischler2026}, in order to produce additional linear combinations and apply \sref{Proposition}{critèred'indépendancelinéaire} to them, we consider derivatives of the functions $S_{n, p}^{[\infty]}(z)$. We choose orders of derivation large enough to kill the polynomial parts $V_{n, p}^{[\infty]}(z)$ whose coefficients are too large for the diophantine methods to come.

The following differentiated series for $(p, k)\in\llbracket 0, h\rrbracket \times \llbracket (r+1)n+1, \kappa n\rrbracket$ are, again, linear combinations of $1$ and the polylogarithms:
\begin{align}
    \label{sériesdérivéescommedesclenlespolylogarithmesbonscoefficients}
    S_{n, p}^{[\infty](k-1)}(z) = Q_{n, 0, (p, k)}(z) + \sum_{i=1}^{a+h} Q_{n, i, (p, k)}(z)\Li_i(1/z), \hspace{1cm}z\in\C \setminus [0, 1].
\end{align}
Indeed, the vector of functions $\prescript{t}{}{\Big(}1, \Li_1(1/z), \Li_2(1/z), ..., \Li_{a+h}(1/z)\Big)$ is a solution of the differential system $Y' = AY$, where $A$ is the same as in \eqref{systemediffdespolylogarithmes} with $a$ replaced by $a+h$, so that the rational functions $Q_{n, i, (p, k)}$ can be computed with the following recurrence relation and the convention $Q_{n, a+h+1, (p, k)} = 0$:
\begin{equation}
\label{récurrencevérifiéeparlesQipk}
    \begin{cases}
        Q_{n, 0, (p, k+1)} = Q'_{n, 0, (p, k)} + \frac{1}{z(1-z)} Q_{n, 1, (p, k)}, &Q_{n, 0, (p, 1)} = 0, \\
        Q_{n, i, (p, k+1)} = Q'_{n, i, (p, k)} - \frac{1}{z} Q_{n, i+1, (p, k)}, &Q_{n, i, (p, 1)} = Q_{n, i, p}, \hspace{1cm} 1\leq i\leq a+h.
        
    \end{cases}
\end{equation}

\smallskip
We only consider indices $k \geq (r+1)n+1$, so that the terms $V_{n, p}^{[\infty](k-1)}$ that are supposed to appear in \eqref{sériesdérivéescommedesclenlespolylogarithmesbonscoefficients} vanish. The polynomials $V_{n, p}^{[\infty]}$ are indeed of degree at most $(r+1)n-1$.

Besides that, the only singularities that could appear at $z=1$ in equation \eqref{sériesdérivéescommedesclenlespolylogarithmesbonscoefficients} are:
\begin{enumerate}
    \item[$\bullet$] a logarithmic singularity due to $\Li_1$ if $Q_{n, 1, (p, k)}(1) \neq 0$,
    \item[$\bullet$] a pole if $Q_{n, 0, (p, k)}$ has a pole at $1$.
\end{enumerate}
But we only consider indices $k \leq \kappa n$, so that the series representation of $S_{n, p}^{[\infty](k-1)}(z)$ converges on the circle $|z|=1$. Indeed, we have
\begin{equation*}
        S_{n, p}^{[\infty](k-1)}(z) = z^{rn}\sum_{t=rn+1}^{+\infty} F_n^{(p)}(t)(-1)^{k-1}(t-rn)_{k-1}z^{-t-k+1},
\end{equation*}
and the term $F_n^{(p)}(t)(t-rn)_{k-1}$ is a rational function of degree $-\omega n -p +k - 1 \leq -2$ in the variable $t$. Thus, for $k\leq\kappa n <\omega n$, none of these two types of singularity happens at $z=1$, because they cannot cancel each other out. In particular, $Q_{n, 1, (p, k)}(1) = 0$ and the only pole that $Q_{n, 0, (p, k)}$ could have is 0. Because of the relation \eqref{récurrencevérifiéeparlesQipk}, this pole has order at most $k-1$. Thus, we have
\begin{equation}
    \begin{split}
    \label{pasdedivergencepolynomialenilogarithmique}
        &\forall (p,k)\in\llbracket 0, h\rrbracket\times\llbracket 1,\kappa n\rrbracket, \;\;\;\;\;\;\;\;\;\;\;\;\;\begin{split}
            &\;z^{k-1}Q_{n, 0, (p, k)} \;\;\text{is a polynomial} \\
            &\text{ of degree at most $(r+1)n$,}
        \end{split} \\ \\
        &\forall (p,k)\in\llbracket 0, h\rrbracket\times\llbracket 1,\kappa n\rrbracket, \;\;\;\;\;\;\;\;\;\;\;\;\; Q_{n, 1, (p, k)}(1) = 0.
    \end{split}
\end{equation}


\subsection{Linear combinations of $L$ functions}
\label{soussection4.3}

For $1\leq i\leq a+h$, notice that the polynomials $Q_{n, i, p}$ defined by \eqref{définitiondesQip} are in $\Q[z^N]$, since $P_{n, i}\in\Q[z^N]$ and $N | rn$. The recurrence relation \eqref{récurrencevérifiéeparlesQipk} then implies $z^{k-1}Q_{n, i, (p, k)} \in \Q[z^N]$ for all $k \geq 1$. Recall that $\mu$ is a $N$-th root of unity, so that
\begin{equation}
\label{lesQnipksontdansQzN}
    \forall i\in\llbracket 1, a+h\rrbracket,\;\;\;\forall \ell\in\llbracket 0, N-1\rrbracket, \;\;\;\;\;\;\;\;  (\mu^\ell z)^{k-1}Q_{n, i, (p, k)}(\mu^\ell z) = z^{k-1}Q_{n, i, (p, k)}(z).
\end{equation}

\smallskip
For $i=0$, the polynomials $z^{k-1}Q_{n, 0, (p, k)}(z)$ are not, in general, in $\Q[z^N]$. This is why we introduce the decomposition
\begin{equation}
    \label{décompositiondesQ0}
    z^{k-1}Q_{n, 0, (p, k)}(z) =: \sum_{m=0}^{N-1} z^mQ_{n, 0, (p, k)}^{<m>}(z), \;\;\;\;\;\;\;\;\;\;Q_{n, 0, (p, k)}^{<m>}\in\Q[z^N].
\end{equation}

Consider the coefficients
\begin{equation*}
    \hat{{f}}(\ell) := \frac{1}{N}\sum_{m=0}^{N-1} {f}(m)\mu^{-\ell m}, \;\;\;\;\;\;\;\; 0 \leq \ell \leq N-1.
\end{equation*}
They allow to reconstruct the function $L({f}, i, z)$ from the functions $\Li_i(z), \Li_i(\mu z), \Li_i(\mu^2 z), ...,\break \Li_i(\mu^{N-1} z)$, as shown in the following lemma. We also state a polynomial identity there, that will be used in \autoref{paragraphestructureshidlovskii}.
\begin{lemma}
\label{formulesd'inversiondeFourier}
    (Fourier inversion formulas) \;\;The following formulas hold.
    \begin{enumerate}
        \item[$(i)$] $\sum_{\ell = 0}^{N-1} \hat{{f}}(\ell)\mu^{\ell m} = {f}(m),\hspace{3.7cm} m\in\Z$,
        \item[$(ii)$] $\sum_{\ell = 0}^{N-1} \hat{{f}}(\ell)\Li_i(\mu^\ell z) = L({f}, i, z), \hspace{2.3cm} i\in\Z_{>0}, \;\;\;z\in\mathcal{D}$.
    \end{enumerate}
    \smallskip
    Moreover, the polynomials
    \begin{equation*}
            Q_{n, 0, (p, k)}^{\{\ell\}}(z) := \mu^{\ell(k-1)}Q_{n, 0, (p, k)}(\mu^\ell z)
    \end{equation*}
    satisfy for any $z\in\C^*$
    \begin{enumerate}
        \item[$(iii.a)$] $\sum_{m=0}^{N-1} \mu^{\ell m}z^{m}Q_{n, 0, (p, k)}^{<m>}(z) = z^{k-1}Q_{n, 0, (p, k)}^{\{\ell\}}(z), \hspace{2.75cm}0\leq \ell\leq N-1$,
        \item[$(iii.b)$] $\frac{1}{N}\sum_{\ell=0}^{N-1}\mu^{-\ell m}z^{k-1}Q_{n, 0, (p, k)}^{\{\ell\}}(z) = z^mQ_{n, 0, (p, k)}^{<m>}(z),\hspace{2.1cm}0\leq m\leq N-1$.
    \end{enumerate}
\end{lemma}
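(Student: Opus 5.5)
All four identities come from the orthogonality relation for $N$-th roots of unity,
\begin{equation*}
\frac{1}{N}\sum_{\ell=0}^{N-1}\mu^{\ell(m-m')} = \mathbbm{1}_{m\equiv m' \,[N]},
\end{equation*}
which holds because $\mu$ is a primitive $N$-th root of unity. For $(i)$ we substitute the definition of $\hat f(\ell)$ and swap the finite sums. This gives
\begin{equation*}
\sum_{\ell=0}^{N-1}\hat f(\ell)\mu^{\ell m}=\sum_{m'=0}^{N-1}f(m')\,\frac1N\sum_{\ell=0}^{N-1}\mu^{\ell(m-m')}.
\end{equation*}
Only the residue $m'\equiv m$ modulo $N$ survives, and $N$-periodicity of $f$ then yields $f(m)$.

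For $(ii)$ we first work on the disk $|z|<1$, where every series converges absolutely. There
\begin{equation*}
\sum_{\ell}\hat f(\ell)\Li_i(\mu^\ell z)=\sum_{m\geq1}\Big(\sum_\ell \hat f(\ell)\mu^{\ell m}\Big)\frac{z^m}{m^i}=\sum_{m\ge1}\frac{f(m)z^m}{m^i}=L(f,i,z),
\end{equation*}
using $(i)$ in the middle step. To pass to $\mathcal{D}$, we note that $\Li_i(\mu^\ell z)$ is holomorphic wherever $\mu^\ell z\notin[1,+\infty[$, that is, for $z\notin[\mu^{-\ell},\mu^{-\ell}\infty[$. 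Since $\{\mu^{-\ell}\}=\{\mu^\ell\}$, every term on the left is holomorphic on $\mathcal{D}$. This is precisely how $L(f,i,\cdot)$ is defined on $\mathcal{D}$ (as announced before \sref{Lemma}{formulesd'inversiondeFourier}), so $(ii)$ there is either a definition or follows from the identity theorem. The domain $\mathcal{D}$ is connected, although not simply connected because the origin is removed, and it contains the punctured unit disk, so the identity theorem applies.

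The main care lies in $(iii.a)$ and $(iii.b)$, where the powers of $\mu$ must be tracked. For $(iii.a)$ we evaluate the decomposition \eqref{décompositiondesQ0} at $\mu^\ell z$. Since $Q^{<m>}_{n,0,(p,k)}\in\Q[z^N]$ and $\mu^N=1$, we have $Q^{<m>}_{n,0,(p,k)}(\mu^\ell z)=Q^{<m>}_{n,0,(p,k)}(z)$, and therefore
\begin{equation*}
(\mu^\ell z)^{k-1}Q_{n,0,(p,k)}(\mu^\ell z)=\sum_{m=0}^{N-1}\mu^{\ell m}z^mQ^{<m>}_{n,0,(p,k)}(z).
\end{equation*}
The left-hand side equals $z^{k-1}\mu^{\ell(k-1)}Q_{n,0,(p,k)}(\mu^\ell z)=z^{k-1}Q^{\{\ell\}}_{n,0,(p,k)}(z)$, which is exactly $(iii.a)$. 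This is an identity of rational functions valid for every $z\in\C^*$.

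For $(iii.b)$ we multiply $(iii.a)$ by $\frac1N\mu^{-\ell m}$ and sum over $\ell$. Applying orthogonality with $0\le m,m'\le N-1$ leaves only the term $m'=m$, which gives $z^mQ^{<m>}_{n,0,(p,k)}(z)$.

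None of the steps is a real obstacle. The only point needing a little care is the analytic-continuation remark in $(ii)$: checking that the cuts of the $\Li_i(\mu^\ell z)$ are exactly the removed half-lines, so that both sides are holomorphic on the connected open set $\mathcal{D}$.
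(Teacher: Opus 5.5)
Your proposal is correct and follows the paper's proof essentially step by step. You use orthogonality of the $N$-th roots of unity for $(i)$ and $(iii.b)$, a termwise computation on $|z|<1$ followed by the identity theorem for $(ii)$, and the substitution $z\leftarrow\mu^\ell z$ in the decomposition of $z^{k-1}Q_{n,0,(p,k)}$ for $(iii.a)$. Your additional remarks (the explicit use of $N$-periodicity in $(i)$, the check that the cuts of the $\Li_i(\mu^\ell z)$ are exactly the removed half-lines, and the connectedness of $\mathcal{D}$) only spell out what the paper leaves implicit.
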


\begin{proof}
    \begin{enumerate}
        \item[]
        \item[$(i)$] We compute for $m\in\Z$
            \begin{align*}
                \sum_{\ell=0}^{N-1} \hat{{f}}(\ell) \mu^{\ell m} &= \frac{1}{N} \sum_{\ell=0}^{N-1} \sum_{\widetilde{m}=0}^{N-1} {f}(\widetilde{m}) \mu^{-\ell(\widetilde{m}-m)} \\
                                                    &= {f}(m) + \frac{1}{N}\sum_{{\widetilde{m}=0}\atop{\widetilde{m} \not\equiv m \;[N]}}^{N-1} {f}(\widetilde{m}) \left( \sum_{\ell = 0}^{N-1} \mu^{-\ell(\widetilde{m}-m)} \right).
            \end{align*}        
            We obtain the result since for $\widetilde{m} \not\equiv m \;[N]$, the inner sum over $\ell$ vanishes.
        \item[$(ii)$]
            We use $(i)$ and compute for $|z| < 1$
            \begin{align*}
                \sum_{\ell=0}^{N-1} \hat{{f}}(\ell)\Li_i(\mu^\ell z) = \sum_{\ell=0}^{N-1} \hat{{f}}(\ell) \left(\sum_{m=1}^{+\infty} \frac{\mu^{\ell m} z^m}{m^i}\right) &= \sum_{m=1}^{+\infty} \frac{z^m}{m^i} \left(\sum_{\ell=0}^{N-1} \hat{{f}}(\ell) \mu^{\ell m}\right) \\
                                                                &= \sum_{m=1}^{+\infty} \frac{{f}(m)z^m}{m^i} \\
                                                                &= L(f, i, z).
            \end{align*}
            This equality of holomorphic functions holds on the domain $|z|<1$. By the isolated zeros principle, it holds on $\mathcal{D}$.
        \item[$(iii.a)$] It is enough to apply, for $\ell \in \llbracket 0, N-1\rrbracket$ the substitution $\;z \xleftarrow[]{} \mu^\ell z\;$ in definition \eqref{décompositiondesQ0}. Recall that for $m\in\llbracket 0, N-1\rrbracket$ we have $Q_{n, 0, (p, k)}^{<m>}(\mu^\ell z) = Q_{n, 0, (p, k)}^{<m>}(z)$, since $Q_{n, 0, (p, k)}^{<m>} \in \Q[z^{\pm N}]$.
        \item[$(iii.b)$] We use $(iii.a)$ and compute for $m \in \llbracket 0, N-1\rrbracket$:
            \begin{align*}
                \frac{1}{N}\sum_{\ell=0}^{N-1} \mu^{-\ell m} &z^{k-1}Q_{n, 0, (p, k)}^{\{\ell\}}(z) = \frac{1}{N}\sum_{\ell=0}^{N-1} \sum_{\widetilde{m} = 0}^{N-1} \mu^{\ell(\widetilde{m}-m)} z^{\widetilde{m}} Q_{n, 0, (p, k)}^{<\widetilde{m}>}(z) \\
                &\hspace{1cm}= z^{m}Q_{n, 0, (p, k)}^{<m>}(z) + \frac{1}{N}\sum_{{\widetilde{m}=0}\atop{\widetilde{m} \neq m}}^{N-1} z^{\widetilde{m}} Q_{n, 0, (p, k)}^{<\widetilde{m}>}(z) \left( \sum_{\ell = 0}^{N-1} \mu^{-\ell(\widetilde{m}-m)} \right).
            \end{align*}
            We obtain the result since for $\widetilde{m} \neq m$, the inner sum over $\ell$ vanishes.
    \end{enumerate}
\end{proof}

We define, for $(p, k)\in\llbracket 0, h\rrbracket \times \llbracket (r+1)n+1, \kappa n\rrbracket$ and $z$ such that $1/z\in\mathcal{D}$:
\begin{equation}
\label{définitiondeLambdapk}
    \widetilde{\Lambda}_{n, (p, k)}(z) := \sum_{\ell = 0}^{N-1} \hat{{f}}(\ell) (\mu^{-\ell} z)^{k-1}S_{n, p}^{[\infty](k-1)}(\mu^{-\ell} z). \\
\end{equation}
This is a linear combination with coefficients in $\Q(z)$ of the constant functions ${f}(m)$ and the functions $L({f}, i, 1/z)$:

\begin{lemma}
\label{lemmeexpressiondelambdaenfonctiondez}
    Let $(p, k)\in\llbracket 0, h\rrbracket\times\llbracket (r+1)n+1, \kappa n\rrbracket$. The following equation holds for every $z\in\mathcal{D}^{-1}$:
    \begin{align}
        \begin{split}
        \label{expressiondelambdaenfonctiondez}
            \widetilde{\Lambda}_{n, (p, k)}(z) = \sum_{m=0}^{N-1} \Bigg(z^m Q_{n, 0, (p, k)}^{<m>}(z)\Bigg) {f}(-m) + \sum_{i=1}^{a+h} \Bigg(z^{k-1}Q_{n, i, (p, k)}(z) \Bigg) L({f}, i, 1/z).
        \end{split}
    \end{align}
\end{lemma}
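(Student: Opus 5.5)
Substitute the expansion \eqref{sériesdérivéescommedesclenlespolylogarithmesbonscoefficients} into the definition \eqref{définitiondeLambdapk}, evaluated at the points $\mu^{-\ell}z$, and treat the constant part and the polylogarithmic part separately. Take $z\in\mathcal{D}^{-1}$. Then $1/(\mu^{-\ell}z)=\mu^\ell/z$. Since $\mathcal{D}$ is stable under multiplication by $\mu$ and does not contain $1$, we have $\mu^{\ell}/z\notin[1,+\infty[$ and $\mu^{-\ell}z\notin[0,1]$. So each $S_{n,p}^{[\infty](k-1)}(\mu^{-\ell}z)$ is given by \eqref{sériesdérivéescommedesclenlespolylogarithmesbonscoefficients}. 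Multiplying by $(\mu^{-\ell}z)^{k-1}$ gives
\begin{equation*}
(\mu^{-\ell}z)^{k-1}S_{n,p}^{[\infty](k-1)}(\mu^{-\ell}z)=(\mu^{-\ell}z)^{k-1}Q_{n,0,(p,k)}(\mu^{-\ell}z)+\sum_{i=1}^{a+h}(\mu^{-\ell}z)^{k-1}Q_{n,i,(p,k)}(\mu^{-\ell}z)\Li_i(\mu^\ell/z).
\end{equation*}

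For the polylogarithmic terms with $i\geq 1$, use \eqref{lesQnipksontdansQzN} with $\mu^{-\ell}$ in place of $\mu^{\ell}$; this is legitimate because $z^{k-1}Q_{n,i,(p,k)}\in\Q[z^N]$. It turns the coefficient into $z^{k-1}Q_{n,i,(p,k)}(z)$, which does not depend on $\ell$. Summing against $\hat f(\ell)$ and applying \sref{Lemma}{formulesd'inversiondeFourier}$(ii)$ at the point $1/z\in\mathcal{D}$ gives $\sum_\ell\hat f(\ell)\Li_i(\mu^\ell/z)=L(f,i,1/z)$. This yields the second sum in \eqref{expressiondelambdaenfonctiondez}.

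For the constant term, write $(\mu^{-\ell}z)^{k-1}Q_{n,0,(p,k)}(\mu^{-\ell}z)$ using the decomposition \eqref{décompositiondesQ0} at the point $\mu^{-\ell}z$:
\begin{equation*}
(\mu^{-\ell}z)^{k-1}Q_{n,0,(p,k)}(\mu^{-\ell}z)=\sum_{m=0}^{N-1}\mu^{-\ell m}z^mQ^{<m>}_{n,0,(p,k)}(z),
\end{equation*}
since each $Q^{<m>}_{n,0,(p,k)}$ lies in $\Q[z^N]$ and is therefore invariant under $z\mapsto\mu^{-\ell}z$. (This is \sref{Lemma}{formulesd'inversiondeFourier}$(iii.a)$ with $\ell$ replaced by $-\ell$ modulo $N$.) Summing over $\ell$ with weights $\hat f(\ell)$ and exchanging the two finite sums gives the coefficient $\sum_\ell\hat f(\ell)\mu^{\ell(-m)}=f(-m)$ by \sref{Lemma}{formulesd'inversiondeFourier}$(i)$ applied at $-m$. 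This produces the first sum in \eqref{expressiondelambdaenfonctiondez}.

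The only delicate point is the domain bookkeeping. One must check that every argument $\mu^{-\ell}z$ lies in $\C\setminus[0,1]$, where \eqref{sériesdérivéescommedesclenlespolylogarithmesbonscoefficients} holds, and that $1/z\in\mathcal{D}$, where the Fourier identity $(ii)$ is valid. Both follow from the rotation invariance of $\mathcal{D}$ under $\mu$. One should also keep straight the sign conventions $\mu^{-\ell}$ versus $\mu^{\ell}$, which are what make the constant coefficients come out as $f(-m)$ rather than $f(m)$. Everything else is finite rearrangement.
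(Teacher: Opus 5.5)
Your proposal is correct and follows essentially the same route as the paper. You expand each $(\mu^{-\ell}z)^{k-1}S_{n,p}^{[\infty](k-1)}(\mu^{-\ell}z)$ into its constant and polylogarithmic parts, use the $\mu$-invariance of $z^{k-1}Q_{n,i,(p,k)}$ and of the $Q^{<m>}_{n,0,(p,k)}$, swap the finite sums, and conclude with parts $(i)$ and $(ii)$ of the Fourier inversion lemma. Your explicit check that $\mu^{-\ell}z\in\C\setminus[0,1]$ and $1/z\in\mathcal{D}$, which the paper leaves implicit, is a welcome addition.
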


\begin{proof}
    We compute
    \begin{align*}
        \widetilde{\Lambda}_{n, (p, k)}(z) &= \sum_{\ell = 0}^{N-1} \hat{{f}}(\ell) \Bigg(\sum_{m=0}^{N-1} (\mu^{-\ell} z)^m Q_{n, 0, (p, k)}^{<m>}(\mu^{-\ell} z)\Bigg) \\
        &\hspace{0.5cm} + \sum_{\ell = 0}^{N-1} \hat{{f}}(\ell) \Bigg( \sum_{i = 1}^{a+h} (\mu^{-\ell} z)^{k-1}Q_{n, i, (p, k)}(\mu^{-\ell} z)\Li_i\left(\frac{\mu^\ell}{z}\right)\Bigg)  \hspace{1cm}\text{\Big(from \eqref{sériesdérivéescommedesclenlespolylogarithmesbonscoefficients} and \eqref{décompositiondesQ0}\Big)} \\ 
                         &= \sum_{m=0}^{N-1} \Bigg[ z^m Q_{n, 0, (p, k)}^{<m>}(z) \Bigg(\sum_{\ell = 0}^{N-1} \hat{{f}}(\ell)\mu^{-\ell m} \Bigg) \Bigg] \\
                         &\hspace{0.5cm}+ \sum_{i=1}^{a+h} \Bigg[   z^{k-1}Q_{n, i, (p, k)}(z)  \Bigg( \sum_{\ell = 0}^{N-1} \hat{{f}}(\ell) \Li_i\left(\frac{\mu^\ell}{z}\right)\Bigg) \Bigg] \hspace{1.5cm}\text{\Big(from \eqref{lesQnipksontdansQzN} and \eqref{décompositiondesQ0}\Big)} \\
                         &= \sum_{m=0}^{N-1} \Bigg( z^m\Qbar_{n, 0, (p, k)}^{<m>}(z)\Bigg) {f}(-m) \\
                         &\hspace{0.5cm} + \sum_{i=1}^{a+h} \Bigg(  z^{k-1}Q_{n, i, (p, k)}(z) \Bigg) L({f}, i, 1/z) . \hspace{4cm}\text{\Big(\sref{Lemma}{formulesd'inversiondeFourier}\Big)}
    \end{align*}
\end{proof}

\subsection{Evaluation at an algebraic point $z_1$}
\label{paragraphenotationspourlasuite}

We define
\begin{equation*}
    \delta_{n, k} := d_k^2\Delta_{a+h, \max\big(k, (r+1)n\big)},
\end{equation*}
where $d_k$ and $\Delta_{a, k}$ are defined in \sref{notations}{notationdketdeltaak}. The quantity $\delta_{n , k}$ depends on the parameters $a+h \leq 2a$ and $r$. We are interested in its asymptotics as $n\to+\infty$, with $k$ bounded by $\kappa n$. Since $\kappa > r+1$, we have $\max\big(k, (r+1)n\big) \leq \kappa n$ and \eqref{estimationexplicitedketdeltaak} yields
\begin{equation}
\label{asymptotiquededelta}
    \delta_{n, k} \leq \Big( e^3 (2a+1) \Big)^{\kappa n}.
\end{equation}

\smallskip
Recall that the algebraic point $z_1 = 1/z_0$ was defined in \sref{Notations}{notations}. We consider the quantities 

\begin{equation}
\label{definitiondelambdafinal}
    \Lambda_{n, (p, k)}(z_1) := \frac{\delta_{n, k}}{(k-1)!} \den(z_1)^{(r+1)n} \widetilde{\Lambda}_{n, (p, k)}(z_1), \hspace{1cm} (p, k)\in\llbracket 0, h\rrbracket\times\llbracket (r+1)n+1, \kappa n\rrbracket.
\end{equation}

According to \sref{Lemma}{lemmeexpressiondelambdaenfonctiondez}, these are linear combinations of the numbers ${f}(0), ..., {f}(N-1)$, $L({f}, 1, 1/z_1), ...,$ $L({f}, a+h, 1/z_1)$ with coefficients in $\Q(z_1)\;$ (we will show in \autoref{paragrapheestimationdescoefficients} that the coefficients are actually in $\mathcal{O}_{\Q(z_1)}$). Let us define
\begin{equation}
\begin{split}
\label{définitiondeszetai}
    &\zeta_i := L(f, i, 1/z_1),\hspace{1cm} 1\leq i\leq a+h, \\
    &\zeta_0^{<m>} := f(-m), \hspace{2.3cm} 0\leq m\leq N-1.
    \end{split}
\end{equation}
\sref{Lemma}{lemmeexpressiondelambdaenfonctiondez} can then be rewritten as
\begin{equation}
\label{Lambdacommecldeszeta}
    \Lambda_{n, (p, k)}(z_1) = \sum_{m=0}^{N-1}\lambda_{n, 0, (p, k)}^{<m>}(z_1) \zeta_0^{<m>} + \sum_{i=1}^{a+h} \lambda_{n, i, (p, k)}(z_1) \zeta_i,
\end{equation}
with
\begin{equation}
\label{definitiondespetitslambdas}
    \begin{split}
        \lambda_{n, i, (p, k)}(z_1) & := \frac{\delta_{n, k}}{(k-1)!}\den(z_1)^{(r+1)n} z_1^{k-1}Q_{n, i, (p, k)}(z_1), \\
    \lambda_{n, 0, (p, k)}^{<m>}(z_1) & := \frac{\delta_{n, k}}{(k-1)!} \den(z_1)^{(r+1)n} z_1^m Q_{n, 0, (p, k)}^{<m>}(z_1).
    \end{split}
\end{equation}

\section{Upper bounds on the combinations and their coefficients}
\label{partie5}

To apply the linear independence criterion (\sref{Proposition}{critèred'indépendancelinéaire}) to the linear combinations $\Lambda_{n, (p, k)}(z_1)$ defined in \autoref{paragraphenotationspourlasuite}, we show in \autoref{paragrapheestimationdescoefficients} that they have coefficients in $\mathcal{O}_{\Q(z_1)}$, growing at most geometrically with $n$. We then show in \autoref{paragrapheestimationdelacombinaison} that $|\Lambda_{n, (p, k)}(z_1)|$ decreases at least geometrically fast to $0$ as $n\to+\infty$. For the latter, we use an integral representation that allows to generalize the bound of \cite[Lemma 6]{fischler2026} outside the region $|1/z| < 1$ where the series representation \eqref{définitiondessériesS} converges.

\subsection{Upper bound on the coefficients}
\label{paragrapheestimationdescoefficients}

Recall that $\xi$ was defined in \sref{Proposition}{existencedescij} and $z_1=1/z_0$ in \sref{Notation}{notations}.

\begin{proposition}
\label{majorationdescoefficients}
    For every $i\in\llbracket 1, a+h\rrbracket$, every $m\in\llbracket 0, N-1\rrbracket$ and every $(p, k)\in\llbracket 0, h\rrbracket \times \llbracket (r+1)n+1, \kappa n\rrbracket$, we have
    \begin{enumerate}[left=9pt]
        \item[$(i)$] $\house{\lambda_{n, i, (p, k)}(z_1)},\; \house{\lambda_{n, 0, (p, k)}^{<m>}(z_1)} \stackunder{$\leq$}{$\scriptscriptstyle{n\to+\infty}$} \beta^{n+o(n)}$, with $$\beta := \Big(\den(z_1) \max(1, \house{z_1})\Big)^{r+1} \Big(32e^3(2a+1)\Big)^\kappa \xi,$$
        \item[$(ii)$] $\lambda_{n, i, (p, k)}(z_1),\; \lambda_{n, 0, (p, k)}^{<m>}(z_1) \in \mathcal{O}_{\Q(z_1)}$.
    \end{enumerate}
\end{proposition}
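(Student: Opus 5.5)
The plan is to reduce everything to \sref{Proposition}{propositionexistenceetestimationdestheta}, applied with $a$ replaced by $a+h$ and $n$ replaced by $(r+1)n$, and then to evaluate the resulting integer-coefficient polynomials at $z_1$.

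\textbf{Reduction.} By \eqref{définitiondesQip}, for $1\leq i\leq a+h$ we have
\begin{equation*}
Q_{n,i,p}(z)=\sum_{j=0}^{n/N} c'_{i,\,j+rn/N}\, z^{N(j+rn/N)}, \qquad c'_{i,\,j+rn/N}:=(-1)^p(i-p)_p\, c_{n,i-p,j},
\end{equation*}
with $c'_{i,j'}:=0$ for all other index pairs. This is exactly a polynomial $P_{(r+1)n,i}$ built from the integers $c'_{i,j'}$, where $(i,j')\in\llbracket 1,a+h\rrbracket\times\llbracket 0,(r+1)n/N\rrbracket$. Moreover, the recurrence \eqref{récurrencevérifiéeparlesQipk} coincides with \eqref{definitiondesPki} and \eqref{recurrencedesPk0}, with $a$ replaced by $a+h$. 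Since that proposition holds for any choice of numbers $c$, it gives
\begin{equation*}
z^{k-1}Q_{n,i,(p,k)}(z)=\sum_{j'}\Big(\sum_{\ell}\theta_{a+h,(r+1)n,k,i,j',\ell}\;c'_{i+\ell,j'}\Big)z^{Nj'}.
\end{equation*}
It also writes $z^{k-1}(1-z)^{k-1}Q_{n,0,(p,k)}$ as a polynomial of degree $\leq (r+1)n+k-1$ with coefficients $\sum\vartheta\, c'$.

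\textbf{Integrality and size for $i\geq1$.} By $(ii)$, $\frac{\delta_{n,k}}{(k-1)!}\theta\in\Z$, because $\delta_{n,k}=d_k^2\Delta_{a+h,\max(k,(r+1)n)}$ is a multiple of $d_k\Delta_{a+h,\max(k,(r+1)n)}$. The $c'$ are integers, so $\frac{\delta_{n,k}}{(k-1)!}z^{k-1}Q_{n,i,(p,k)}\in\Z[z]$, of degree $\leq (r+1)n$. Its coefficients are bounded by
\begin{equation*}
\delta_{n,k}\,k^{a+h}\,2^{(r+1)n}\,a\,(2a)^{a}\,\xi^{n+o(n)},
\end{equation*}
using $(i)$, the bound $(i-p)_p\leq (2a)^a$ and \sref{Proposition}{existencedescij}$(iii)$. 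With \eqref{asymptotiquededelta} and $(r+1)n\leq\kappa n$, this is at most $(2e^3(2a+1))^{\kappa n}\xi^{n+o(n)}$.

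Multiplying by $\den(z_1)^{(r+1)n}$ and evaluating at $z_1$ then gives an algebraic integer, since each monomial $\den(z_1)^{(r+1)n}z_1^t$ with $t\leq(r+1)n$ is one. Its house is at most $((r+1)n+1)$ times the coefficient bound times $(\den(z_1)\max(1,\house{z_1}))^{(r+1)n}$. This is within $\beta^{n+o(n)}$.

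\textbf{The case $i=0$ (main obstacle).} The proposition only controls $z^{k-1}(1-z)^{k-1}Q_{n,0,(p,k)}$, whereas we need the coefficients of $z^{k-1}Q_{n,0,(p,k)}$ itself. Here I use \eqref{pasdedivergencepolynomialenilogarithmique}: $z^{k-1}Q_{n,0,(p,k)}$ is a polynomial of degree $\leq(r+1)n$, so the division by $(1-z)^{k-1}$ is exact.

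Write $R:=\frac{\delta_{n,k}}{(k-1)!}z^{k-1}(1-z)^{k-1}Q_{n,0,(p,k)}$. This lies in $\Z[z]$ by $(ii_0)$, and its coefficients are at most $\delta_{n,k}k^{a+2}8^{\kappa n}(2a)^a\xi^{n+o(n)}$ by $(i_0)$. The quotient $R/(1-z)^{k-1}$ equals the truncation to degree $\leq(r+1)n$ of the power series $R\cdot\sum_{u}\binom{u+k-2}{k-2}z^u$. Hence it has integer coefficients, since $(1-z)^{k-1}$ is monic up to sign (Gauss's lemma). Each coefficient is at most $((r+1)n+k)\cdot 2^{(r+1)n+k-2}$ times the maximal coefficient of $R$. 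Because $(r+1)n+k\leq 2\kappa n$, the extra factor is at most $4^{\kappa n}$. Combined with $8^{\kappa n}$, this produces the $32^{\kappa}$ in $\beta$.

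Finally, $z^mQ^{<m>}_{n,0,(p,k)}$ just collects the coefficients of $z^{k-1}Q_{n,0,(p,k)}$ whose exponent is $\equiv m \bmod N$. So it is again an integer polynomial of degree $\leq(r+1)n$ with the same coefficient bound, and evaluation at $z_1$ proceeds exactly as for $i\geq1$. The step requiring the most care is this exact-division estimate: one must use the degree bound from \eqref{pasdedivergencepolynomialenilogarithmique}, because otherwise the quotient would not be a polynomial and no uniform bound would follow.
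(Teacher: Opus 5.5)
Your proposal is correct and follows essentially the same route as the paper. You apply Proposition~\ref{propositionexistenceetestimationdestheta} with $(a,n)$ replaced by $(a+h,(r+1)n)$, get integrality from $\delta_{n,k}$ and $\den(z_1)^{(r+1)n}$, and recover the coefficients of $z^{k-1}Q_{n,0,(p,k)}$ as the Cauchy product with $(1-z)^{-(k-1)}$, where the extra factor $2^{(r+1)n+k-2}\leq 4^{\kappa n}$ combines with $8^{\kappa n}$ to give the $32^\kappa$ in $\beta$. (Minor slips: $k^{a+2}$ should read $k^{a+h+1}$, and you dropped the polynomially many $(\ell,j)$ terms in the $\vartheta$-sum; both are $e^{o(n)}$ and harmless.)
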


\begin{proof}
    We fix $p\in\llbracket 0, h\rrbracket$. We write $Q_{n, i, p}(z) =: \sum_{j=0}^{(r+1)n/N} \gamma_{n, i, j}z^{Nj}$ for $i\geq 1$. From the definition \eqref{définitiondesQip} we deduce
    \begin{equation}
        \begin{split}
        \label{majorationdesgammaenfonctiondesc}
            &\bullet \;\; \gamma_{n, i, j} \in \Z, \\
            &\bullet \;\; \max_{{1\leq i \leq a+h}\atop{0\leq j\leq (r+1)n/N}} |\gamma_{n, i, j}| \leq (a+h)_p \max_{{1\leq i\leq a}\atop{0\leq j\leq n/N}} |c_{n, i, j}|.
        \end{split}
    \end{equation}
    
    The rational functions $Q_{n, i, (p, k)}$ and $Q_{n, 0, (p, k)}$ satisfy the recurrence relations \eqref{récurrencevérifiéeparlesQipk}, which are the exact same as equations $\eqref{definitiondesPki}$ and \eqref{recurrencedesPk0}. Therefore, \sref{Proposition}{propositionexistenceetestimationdestheta} yields
    \begin{align*}
        z^{k-1}Q_{n, i, (p, k)}(z) &= \sum_{j=0}^{(r+1)n/N} \Bigg( \sum_{\ell=0}^{a+h-i} \theta_{a+h, (r+1)n, k, i, j, \ell} \; \gamma_{n, i+\ell, j} \Bigg) z^{Nj}, \\
        z^{k-1}(1-z)^{k-1}Q_{n, 0, (p, k)}(z) &= \sum_{t=0}^{(r+1)n+k-1} \Bigg( \sum_{\ell=0}^{a+h-1} \sum_{j=0}^{(r+1)n/N} \vartheta_{a+h, (r+1)n, k, 0, j, \ell, t} \; \gamma_{n, 1+\ell, j} \Bigg) z^t, 
    \end{align*}
    with
    \begin{align}
    \begin{split}
    \label{majorationtheta}
        &|\theta_{a+h, (r+1)n, k, i, j, \ell}| \leq k^{a+h} 2^{(r+1)n} (k-1)!, \\
        &|\vartheta_{a+h, (r+1)n, k, 0, j, \ell, t}| \leq k^{a+h+1}8^{\max\big(k, (r+1)n\big)}(k-1)!,
    \end{split} \\
    \begin{split}
    \label{denomtheta}
        &\frac{\delta_{n, k}}{(k-1)!}\theta_{a+h, (r+1)n, k, i, j, \ell} \in \mathbb{Z}, \\
        &\frac{\delta_{n, k}}{(k-1)!}\vartheta_{a+h, (r+1)n, k, 0, j, \ell, t} \in \mathbb{Z}.
    \end{split}
    \end{align}

    For $k$ bounded by $\kappa n$, we deduce from \eqref{majorationdesgammaenfonctiondesc} and \eqref{majorationtheta} the asymptotic upper bound
    \begin{small}
    \begin{align*}
        \house{\lambda_{n, i, (p, k)}(z_1)} &=  \frac{\delta_{n, k}}{(k-1)!} \den(z_1)^{(r+1)n} \house{z_1^{k-1}Q_{n, i, (p, k)}(z_1)} \hspace{4.5cm}\text{\big(from \eqref{definitiondespetitslambdas}\big)}\\
        &\leq \frac{\delta_{n, k}}{(k-1)!}\den(z_1)^{(r+1)n} \Bigg(\sum_{j=0}^{(r+1)n/N} \house{z_1}^{Nj}  \sum_{\ell=0}^{a+h-i} |\theta_{a+h, (r+1)n, k, i, j, \ell}| \Bigg) \max_{i, j} |\gamma_{n, i, j}| \\
        &\leq \frac{\delta_{n, k}}{(k-1)!}\den(z_1)^{(r+1)n}\max\Big(1, \house{z_1}\Big)^{(r+1)n}\Big((r+1)\frac{n}{N} + 1\Big)(a+h)k^{a+h}2^{(r+1)n}(k-1)! \\
        &\hspace{11cm} \times (a+h)_h \max_{i, j}|c_{n, i, j}| \\
        &\stackunder{$\leq$}{$\scriptscriptstyle{n\to+\infty}$}  \Big( e^3(2a+1)\Big)^{\kappa n} \den(z_1)^{(r+1)n}\max\Big(1, \house{z_1}\Big)^{(r+1)n} \Big((r+1)\frac{n}{N} + 1\Big)(a+h)k^{a+h} 2^{\kappa n} \\
        &\hspace{2cm} \times (a+h)_h  \xi^{n+o(n)} \hspace{1.5cm} \big(\text{from \eqref{asymptotiquededelta}, $r+1 < \kappa$ and \sref{Proposition}{existencedescij}}\big) \\
        &\stackunder{$\leq$}{$\scriptscriptstyle{n\to+\infty}$} \Bigg(\Big(\den(z_1)\max\Big(1, \house{z_1}\Big)\Big)^{r+1}\Big(2e^3(2a+1)\Big)^\kappa  \xi \Bigg)^{n+o(n)}.
    \end{align*}
    \end{small}

    Then, we compute the coefficients of the polynomial $z^{k-1}Q_{n, 0, (p, k)}(z)$ as the Cauchy product of $z^{k-1}(1-z)^{k-1}Q_{n, 0, (p, k)}(z)$ and $\frac{1}{(1-z)^{k-1}} = \sum_{t=0}^{+\infty} \binom{t+k-2}{t} z^t$. For $q\in\llbracket 0, (r+1)n\rrbracket$, the coefficient of $z^q$ in $z^{k-1}Q_{n, 0, (p, k)}(z)$ is
    \begin{equation}
    \label{coefficientsdupolynomezk-1Q0}
        \sum_{t=0}^q \binom{q-t+k-2}{q-t} \Bigg( \sum_{\ell=0}^{a+h-1} \sum_{j=0}^{(r+1)n/N} \vartheta_{a+h, (r+1)n, k, 0, j, \ell, t} \; \gamma_{n, 1+\ell, j} \Bigg).
    \end{equation}

    Using \eqref{majorationdesgammaenfonctiondesc} and \eqref{denomtheta}, it follows that $\frac{\delta_{n, k}}{(k-1)!}z^{k-1}Q_{n, i, (p, k)}(z)$ and  $\frac{\delta_{n, k}}{(k-1)!}z^{k-1}Q_{n, 0, (p, k)}(z)$ are polynomials in $\Z[z]$. They are of degree at most $(r+1)n$, thus when we evaluate them at $z=z_1$ and multiply by $\den(z_1)^{(r+1)n}$, we obtain quantities in $\mathcal{O}_{\Q(z_1)}$. We recall that $\lambda_{n, i, (p, k)}(z_1)$ and $\lambda_{n, 0, (p, k)}^{<m>}(z_1)$ are defined in \eqref{definitiondespetitslambdas}, hence $(ii)$.
    
    \smallskip
    Finally, by definition \eqref{décompositiondesQ0}, the polynomial $z^m Q_{n, 0, (p, k)}^{<m>}(z)$ has, for every $m\in\llbracket 0, N-1\rrbracket$, at most $\frac{(r+1)n}{N}+1$ non-zero coefficients, that are all of the form \eqref{coefficientsdupolynomezk-1Q0}, so that
    \begin{small}
    \begin{align*}
        \house{\lambda_{n, 0, (p, k)}^{<m>}(z_1)} &= \frac{\delta_{n, k}}{(k-1)!} \den(z_1)^{(r+1)n} \house{z_1^m Q_{n, 0, (p, k)}^{<m>}(z_1)} \hspace{4cm}\text{\big(from \eqref{definitiondespetitslambdas}\big)} \\
        &\leq \frac{\delta_{n, k}}{(k-1)!}\den(z_1)^{(r+1)n}\max\Big(1, \house{z_1}\Big)^{(r+1)n} \Big(\frac{(r+1)n}{N}+1\Big) \\
        &\hspace{1cm} \times \sum_{t=0}^{(r+1)n} \Bigg( 2^{(r+1)n+k - 2}\bigg( \sum_{\ell=0}^{a+h-1} \sum_{j=0}^{(r+1)n/N} |\vartheta_{a+h, (r+1)n, k, 0, j, \ell, t}| \bigg)\Bigg) \max_{i, j}|\gamma_{n, i, j}| \\
        &\leq \frac{\delta_{n, k}}{(k-1)!}\bigg(\den(z_1) \max\Big(1, \house{z_1}\Big)\bigg)^{(r+1)n} \Big(\frac{(r+1)n}{N}+1\Big) \Big((r+1)n+1\Big) 2^{(r+1)n+\kappa n-2}\\
        &\hspace{1cm} \times (a+h)\Big(\frac{(r+1)n}{N}+1\Big) \times k^{a+h+1}8^{\max\big(k, (r+1)n\big)}(k-1)!(a+h)_h \max|c_{n, i, j}| \\
        &\stackunder{$\leq$}{$\scriptscriptstyle{n\to+\infty}$} \Big(e^3(2a+1)\Big)^{\kappa n} \bigg(\den(z_1) \max\Big(1, \house{z_1}\Big)\bigg)^{(r+1)n} \Big((r+1)n+1\Big)^3 2^{2\kappa n}\\
        & \hspace{0.2cm}\times  (a+h)k^{a+h+1}  8^{\kappa n} (a+h)_h \xi^{n+o(n)} \hspace{0.3cm} \big(\text{from \eqref{asymptotiquededelta}, $r+1 < \kappa$ and \sref{Proposition}{existencedescij}}\big) \\
        &\stackunder{$\leq$}{$\scriptscriptstyle{n\to+\infty}$} \Bigg(\bigg(\den(z_1) \max\Big(1, \house{z_1}\Big)\bigg)^{r+1} \Big(32e^3(2a+1)\Big)^\kappa \xi \Bigg)^{n+o(n)}.
    \end{align*}
    \end{small}
\end{proof}


\subsection{Upper bound on the combinations}
\label{paragrapheestimationdelacombinaison}

Recall that $\xi$ was defined in \sref{Proposition}{existencedescij} and $z_1=1/z_0$ in \sref{Notation}{notations}. We also use Pochhammer's symbol
\begin{equation}
\label{Pochhammer}
    (x)_j: = x(x+1)...(x+j-1), \;\;\;\;\;x\in\R, \;\;j\in\Z_{\geq 0}.
\end{equation}
The properties $\;\;\left(\frac{d}{dz}\right)^j z^x = (x-j+1)_jz^{x-j}$, $\;\;\binom{k}{j} = \frac{(k-j+1)_j}{j!}$ and $\;\;\binom{k}{j} \leq 2^k$ will be used without further reference.

\begin{proposition}
\label{majorationdescombinaisons}
    For every $(p, k)\in\llbracket 0, h\rrbracket \times \llbracket (r+1)n+1, \kappa n\rrbracket$, we have
    \begin{equation*}
        \left| \Lambda_{n, (p, k)}(z_1)\right| \leq \alpha^{n+o(n)},
    \end{equation*}
    with
    \begin{equation*}
        \alpha :=  \den(z_1)^{r+1}\Big(e^3(2a+1) \max(1, |z_1|)g(z_1)\Big)^\kappa \frac{\xi}{r^\Omega},
    \end{equation*}
    where $$g(z) := \max\bigg(1, \;\;\max_{v\in [0, 1], \ell\in\llbracket 0, N-1\rrbracket} \frac{1}{|z-v\mu^\ell|}\bigg).$$
    
\end{proposition}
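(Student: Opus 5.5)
We will bound each summand of the definition \eqref{définitiondeLambdapk} of $\widetilde{\Lambda}_{n,(p,k)}(z_1)$ separately. Since $|\hat f(\ell)|$ is bounded independently of $n$, and by \eqref{definitiondelambdafinal} and \eqref{asymptotiquededelta} the prefactor $\delta_{n,k}\den(z_1)^{(r+1)n}$ is at most $\big(e^3(2a+1)\big)^{\kappa n}\den(z_1)^{(r+1)n}$, it suffices to show that for each $\ell$
\begin{equation*}
\frac{|w|^{k-1}}{(k-1)!}\left|S_{n,p}^{[\infty](k-1)}(w)\right| \leq \Big(\max(1,|z_1|)\,g(z_1)\Big)^{\kappa n}\frac{\xi^{n+o(n)}}{r^{\Omega n}}, \qquad w=\mu^{-\ell}z_1 .
\end{equation*}
Here $|w|=|z_1|$ and $\mathrm{dist}(w,[0,1])=\mathrm{dist}(z_1,\mu^\ell[0,1])\geq 1/g(z_1)$. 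The point is that $w$ need not satisfy $|w|>1$, so the series \eqref{définitiondessériesS} cannot be used directly. Instead we will use an integral representation valid on $\C\setminus[0,1]$.

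The first step is an integral formula. Writing $F_n^{(p)}(t)=\sum_{j\geq\omega n+p}\mathfrak{B}_{j}t^{-j}$ (with $\mathfrak{B}_j$ obtained from $\mathfrak{A}_{n,k}$ by differentiation), we use $t^{-j}=\frac{1}{(j-1)!}\int_0^{\infty}x^{j-1}e^{-tx}\,dx$. Alternatively, and more conveniently, we use the polylog representation $\Li_i(1/z)=\frac{1}{(i-1)!}\int_0^1\frac{(-\log u)^{i-1}}{z-u}du$, valid on $\C\setminus[0,1]$, inside the computation of Lemma \ref{lesSsontdesclenlespolylogs}. This gives
\begin{equation*}
S_{n,p}^{[\infty]}(z)=\int_0^1 \frac{z^{rn}}{z-u}\,u^{rn}\,G_{n,p}(u)\,du ,
\end{equation*}
where $G_{n,p}(u)=\sum_{t}F_n^{(p)}(t)u^{t-rn-1}\cdot(\ldots)$ is the function $\sum_{i,j}c_{n,i,j}(-1)^p(i)_p u^{Nj}\frac{(-\log u)^{i+p-1}}{(i+p-1)!}$. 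This identity holds first for $|z|>1$ by expanding $1/(z-u)$ in a geometric series, and then on $\C\setminus[0,1]$ by analytic continuation. We then have to check that the polynomial part $V_{n,p}^{[\infty]}$ is accounted for. Its degree is less than $k-1$, so it is killed by the differentiation anyway, and we may simply work with the combination $\sum_i Q_{n,i,p}(z)\Li_{i+p}(1/z)$ written as an integral, with the polynomial correction absorbed.

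The second step is to differentiate $k-1$ times under the integral. The $z$-dependence is $z^{rn}/(z-u)$, and Leibniz's rule gives
\begin{equation*}
\frac{1}{(k-1)!}\partial_z^{k-1}\frac{z^{rn}}{z-u}=\sum_{j}\binom{rn}{j}z^{rn-j}\frac{(-1)^{k-1-j}}{(z-u)^{k-j}} .
\end{equation*}
Bounding the binomial by $2^{rn}$ and using $|z-u|^{-1}\leq g(z_1)$ and $|z|\leq\max(1,|z_1|)$, this is at most $(\max(1,|z_1|)g(z_1))^{\kappa n}2^{rn}$ times a polynomial in $n$. The factor $|w|^{k-1}$ is also $\leq\max(1,|z_1|)^{\kappa n}$; we need to combine the exponents carefully so that the total exponent stays $\kappa$, possibly by using $rn<k$.

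The third step is the main obstacle: bounding $\int_0^1 u^{rn}|G_{n,p}(u)|\,du$ by $\xi^{n+o(n)}r^{-\Omega n}$, uniformly and \emph{without losing the $2^{rn}$}. We cannot use the crude coefficient bound of Proposition \ref{existencedescij}$(iii)$, since it does not give the $r^{-\Omega n}$ gain. Instead we will re-express the integral through the Taylor coefficients at infinity. Indeed, $\int_0^1 u^{rn+m-1}G(u)\,du$ is essentially $F_n^{(p)}(rn+m)$, and expanding $1/(z-u)$ around a suitable point lets us control the integral by $\sum_{k'\geq\omega n}|\mathfrak{A}_{n,k'}|\,(k')_p\,(rn)^{-k'}$. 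By Proposition \ref{existencedescij}$(ii)$ this is
\begin{equation*}
\sum_{k'} r^{k'-\Omega n}n^{k'}k'^a\xi^{n+o(n)}(rn)^{-k'}=r^{-\Omega n}\xi^{n+o(n)} ,
\end{equation*}
up to polynomial factors for $k'<\Omega n$, and the terms with $k'\geq\Omega n$ give $r^{-k'}\leq r^{-\Omega n}$. This mirrors \cite[Lemma 6]{fischler2026}. The delicate point is to arrange the differentiated kernel so that the evaluation point is $rn$-scaled, which is how the factor $2^{rn}$ should disappear. We expect this reorganization to be the hardest part. Multiplying the three bounds gives $\alpha^{n+o(n)}$.
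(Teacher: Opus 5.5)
There is a genuine gap, and it sits exactly where you locate the difficulty. Your reduction to bounding $\frac{|w|^{k-1}}{(k-1)!}|S_{n,p}^{[\infty](k-1)}(w)|$, and the idea of an integral representation on $\C\setminus[0,1]$ with kernel controlled by $g(z_1)$, are right. However, your representation is off by a factor $z^{rn}$. Insert $(t+Nj)^{-s}=\frac{1}{(s-1)!}\int_0^1u^{t+Nj-1}(-\log u)^{s-1}\,du$ into the series defining $S_{n,p}^{[\infty]}$. The geometric sum is $\sum_{t\geq rn+1}u^{t-1}z^{-t}=u^{rn}z^{-rn}/(z-u)$, so the prefactor $z^{rn}$ cancels and
\[
S_{n,p}^{[\infty]}(z)=\int_0^1\frac{u^{rn}\,G_{n,p}(u)}{z-u}\,du ,\qquad z\in\C\setminus[0,1].
\]
Your formula keeps both $z^{rn}$ and $u^{rn}$. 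It conflates $Q_{n,i,p}(z)=(-1)^p(i)_pz^{rn}P_{n,i}(z)$ with $(-1)^p(i)_pu^{rn}P_{n,i}(u)$, and their difference is exactly what produces $V_{n,p}^{[\infty]}$. With the correct kernel, $\frac{1}{(k-1)!}\partial_z^{k-1}(z-u)^{-1}=(-1)^{k-1}(z-u)^{-k}$. So there is no Leibniz sum, no $\binom{rn}{j}\leq 2^{rn}$, and no extra $\max(1,|z_1|)^{rn}$. As written, your Step 2 puts an additional factor $2^{r}\max(1,|z_1|)^{r}$ into the base, which is not $\alpha$, and no ``reorganization'' of that kernel can remove it.

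More importantly, Step 3, the only place where $r^{-\Omega n}$ can come from, is not proved. Expanding $1/(z-u)$ cannot work, since for $z$ close to the segment no expansion converges on all of $[0,1]$. Knowing that the moments $\int_0^1u^{rn+m-1}G_{n,p}(u)\,du=F_n^{(p)}(rn+m)$ are small also does not control $\int_0^1u^{rn}|G_{n,p}(u)|\,|z-u|^{-k}\,du$. The missing idea is to set $u=e^{-x}$ and expand $G_{n,p}$ itself through the coefficients at infinity. Indeed $G_{n,p}(e^{-x})$ is the inverse Laplace (Borel) transform of $F_n^{(p)}$:
\[
G_{n,p}(e^{-x})=\sum_{q\geq\omega n}(-1)^p(q)_p\,\mathfrak{A}_{n,q}\,\frac{x^{q+p-1}}{(q+p-1)!}.
\]
To check this, expand $e^{-Njx}$ and compare with the expansion of $(t+Nj)^{-i-p}$ at $t=\infty$; the terms with $q<\omega n$ vanish by Proposition~\ref{existencedescij}$(i)$. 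Only after this expansion should you take absolute values term by term. Using $|\mu^{-\ell}z_1-e^{-x}|^{-1}\leq g(z_1)$ and $\int_0^{+\infty}x^{q+p-1}e^{-(rn+1)x}\,dx=(q+p-1)!/(rn+1)^{q+p}$, you get
\[
\frac{1}{(k-1)!}\left|S_{n,p}^{[\infty](k-1)}(\mu^{-\ell}z_1)\right|\leq g(z_1)^k\sum_{q\geq\omega n}\frac{(q)_p\,|\mathfrak{A}_{n,q}|}{(rn+1)^{q+p}} .
\]
Proposition~\ref{existencedescij}$(ii)$ then turns the right-hand side into $g(z_1)^k\,r^{-\Omega n}\xi^{n+o(n)}$. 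Bounding $|G_{n,p}|$ through the $c_{n,i,j}$ instead would lose the gain $r^{-\Omega n}$ entirely. Once corrected, your route coincides with the paper's. There, the expansion at infinity is inserted into the series for $|z|>1$, and the identities $t^{-s}=\Gamma(s)^{-1}\int_0^{+\infty}x^{s-1}e^{-tx}\,dx$ and $\sum_{t\geq k}\binom{t-1}{k-1}z^{-t}=(z-1)^{-k}$ give the same formula with kernel $(z-e^{-x})^{-k}$. That formula is then continued analytically to $\C\setminus[0,1]$.
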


\begin{proof}
    We write for $|z| > 1$ (which allows us to swap the series and the integral),

\begin{small}
\begin{align*}
    \frac{\delta_{n, k}}{(k-1)!}S_{n, p}^{[\infty](k-1)}(z) &= \frac{\delta_{n, k}}{(k-1)!} \sum_{t=rn+1}^{+\infty} F_n^{(p)}(t) (rn-t-k+2)_{k-1} z^{rn-t-k+1} \\
    &= (-1)^{k-1}\delta_{n, k}\sum_{t = rn+1}^{+\infty} \binom{t-rn+k-2}{k-1} F_n^{(p)}(t)z^{rn-t-k+1} \\
    &= (-1)^{k-1}\delta_{n, k} \sum_{t=k}^{+\infty} \binom{t-1}{k-1} F_n^{(p)}\big(t+rn-k+1\big)z^{-t} \hspace{1cm}(t\xleftarrow[]{} t-rn+k-1) \\
    &= (-1)^{k-1}\delta_{n, k}\sum_{t=k}^{+\infty} \binom{t-1}{k-1} \sum_{q=\omega n}^{+\infty} \frac{(-1)^p(q)_p \mathfrak{A}_{n, q}}{(t+rn-k+1)^{q+p}} z^{-t} \hspace{0.3cm} \big(\text{from \eqref{DESdefn} differentiated} \\ 
    &\hspace{8.3cm} \text{$p$ times and \sref{Proposition}{existencedescij} (i)}\big) \\
    &= (-1)^{p+k-1} \delta_{n, k} \sum_{q=\omega n}^{+\infty} (q)_p \mathfrak{A}_{n, q} \left( \sum_{t=k}^{+\infty} \binom{t-1}{k-1} \frac{z^{-t}}{(t+rn-k+1)^{q+p}}\right).
\end{align*}
\end{small}

Then, using the formulas $\frac{1}{t^q} = \frac{1}{\Gamma(q)}\int_0^{+\infty} u^{q-1}e^{-tu}\:du$ and $\sum_{t=k}^{+\infty} \binom{t-1}{k-1} z^{-t} = \frac{1}{(z-1)^k}$, we obtain the following integral representation :

\begin{align*}
    \sum_{t=k}^{+\infty} \binom{t-1}{k-1} \frac{z^{-t}}{(t+rn-k+1)^{q+p}} &= \sum_{t=k}^{+\infty} \binom{t-1}{k-1} \frac{z^{-t}}{\Gamma(q+p)}\int_0^{+\infty} u^{q+p-1}e^{-(t+rn-k+1)u}\:du \\
    &= \frac{1}{\Gamma(q+p)} \int_0^{+\infty} u^{q+p-1} e^{-(rn-k+1)u}\left( \sum_{t=k}^{+\infty} \binom{t-1}{k-1} (ze^u)^{-t}\right)\:du \\
    &= \frac{1}{\Gamma(q+p)}\int_0^{+\infty} \frac{u^{q+p-1}e^{-(rn-k+1)u}}{(ze^u - 1)^k}\:du \\
    &= \frac{1}{\Gamma(q+p)}\int_0^{+\infty} \frac{u^{q+p-1}e^{-(rn+1)u}}{(z - e^{-u})^k}\:du.
\end{align*}

We thus have
\begin{equation*}
    \frac{\delta_{n, k}}{(k-1)!}S_{n, p}^{[\infty](k-1)}(z) = (-1)^{p+k-1}\delta_{n, k} \sum_{q=\omega n}^{+\infty} \frac{(q)_p \mathfrak{A}_{n, q}}{\Gamma(q+p)}\int_0^{+\infty} \frac{u^{q+p-1}e^{-(rn+1)u}}{(z-e^{-u})^k}\:du.
\end{equation*}
The right-hand side defines a holomorphic function on $\C\setminus [0, 1]$, which coincides with the function $\frac{\delta_{n, k}}{(k-1)!}S_{n, p}^{[\infty](k-1)}(z)$ on the domain $|z| > 1$. By the isolated zeroes principle, both of these functions coincide on $\C\setminus [0, 1]$. This yields the following upper bound, on all of $\C\setminus [0, 1]$:
\begin{align}
\label{majorationdedeltafoisSsurkfact}
    \left|\frac{\delta_{n, k}}{(k-1)!}S_{n, p}^{[\infty](k-1)}(z)\right| &\leq \delta_{n, k} \sum_{q=\omega n}^{+\infty} \frac{(q)_p |\mathfrak{A}_{n, q}|}{\Gamma(q+p)}\int_0^{+\infty} \frac{u^{q+p-1}e^{-(rn+1)u}}{|z-e^{-u}|^k}\:du \nonumber \\
    &\leq \delta_{n, k} g(z)^k \xi^{n+o(n)} \left( \sum_{q=\omega n}^{\Omega n-1} \frac{(q)_p n^q q^a r^{q-\Omega n}}{(rn+1)^{q+p}} + \sum_{q=\Omega n}^{+\infty} \frac{(q)_p n^q q^a}{(rn+1)^{q+p}} \right) \nonumber \\
    &\leq \delta_{n, k} g(z)^k \xi^{n+o(n)} \left( \sum_{q=\omega n}^{\Omega n-1} \frac{(q)_p n^q q^a r^{q-\Omega n}}{(rn)^{q}} + \sum_{q=\Omega n}^{+\infty} \frac{(q)_p n^q q^a}{(rn)^{q}} \right) \nonumber \\
    &\leq \delta_{n, k} g(z)^k \xi^{n+o(n)} \left( \frac{(\Omega n + h)^{a+h+1}}{r^{\Omega n}} + \frac{(\Omega n+a+h)^{a+h}}{r^{\Omega n}(1-\frac{1}{r})^{a+h+1}} \right) \nonumber \\
    &\stackunder{$\leq$}{$\scriptscriptstyle{n\to+\infty}$} \Bigg( \Big(e^3 (2a+1) g(z)\Big)^\kappa \frac{\xi}{r^\Omega}\Bigg)^{n+o(n)}.
\end{align}


\smallskip
Recall that $\widetilde{\Lambda}_{n, (p, k)}(z_1)$ and $\Lambda_{n, (p, k)}(z_1)$ are defined by equations \eqref{définitiondeLambdapk} and \eqref{definitiondelambdafinal}, so that
\begin{align*}
    \Lambda_{n, (p, k)}(z_1) &= \frac{\delta_{n, k}}{(k-1)!}\den(z_1)^{(r+1)n}\widetilde{\Lambda}_{n, (p, k)}(z_1) \\
    &= \den(z_1)^{(r+1)n} \sum_{\ell=0}^{N-1} \hat{{f}}(\ell) (\mu^{-\ell} z_1)^{k-1} \frac{\delta_{n, k}}{(k-1)!}S_{n, p}^{[\infty](k-1)}(\mu^{-\ell} z_1).
\end{align*}
Since the upper bound \eqref{majorationdedeltafoisSsurkfact} holds for every $z$ such that $1/z \in\mathcal{D}$, we get
\begin{align*}
    \left| \Lambda_{n, (p, k)}(z_1) \right| &\stackunder{$\leq$}{$\scriptscriptstyle{n\to+\infty}$}  \den(z_1)^{(r+1)n} N  \max_{0\leq\ell\leq N-1}|\hat{{f}}(\ell)| \max(1, |z_1|)^{\kappa n} \Bigg( \Big(e^3(2a+1)g(z_1)\Big)^\kappa \frac{\xi}{r^\Omega} \Bigg)^{n+o(n)} \nonumber \\
                                                                        &\stackunder{$\leq$}{$\scriptscriptstyle{n\to+\infty}$} \Bigg( \den(z_1)^{r+1}\Big(e^3(2a+1) \max(1, |z_1|)g(z_1)\Big)^\kappa \frac{\xi}{r^\Omega}  \Bigg)^{n+o(n)}. 
\end{align*}
\end{proof}

\section{Application of a zero estimate}
\label{partie6}

In this section, we prove that the linear combinations $\Lambda_{n, (p, k)}(z_1)$ constructed in \autoref{paragraphenotationspourlasuite} satisfy assumption $(iii)$ of the linear independence criterion (\sref{Proposition}{critèred'indépendancelinéaire}). 

\smallskip
In \autoref{paragrapheénoncédeShidlovskii}, we state a version of ``Shidlovskii's lemma" in a general context.

In \autoref{paragraphestructureshidlovskii}, we state \sref{Proposition}{propositiondéduitedeshidlovskii}, which asserts that assumption $(iii)$ is satisfied by the linear combinations $\Lambda_{n, (p, k)}(z_1)$ under an assumption on the parameters $\omega, r, h$ and $\kappa$. The rest of \autoref{partie6} is dedicated to the proof of this proposition, whose strategy is detailed at the end of \autoref{paragraphestructureshidlovskii}.

\subsection{Statement of a ``Shidlovskii's lemma"}

\label{paragrapheénoncédeShidlovskii}

The generic term ``Shidlovskii's lemma" stands for a result based on Shidlovskii's ideas \cite{shidlovskii89}, that was built upon by different authors, notably by Bertrand and Beukers \cite{bertrandbeukers85}, Bertrand \cite{bertrand2012} and Fischler \cite{fischler2018}.

Let $d, \Delta\geq 1$ be integers. Consider a matrix $A \in M_d\big(\C(z)\big)$ and weighting polynomials $S_1, ..., S_{d}\in\C[z]$ of degree at most $\Delta$. To each solution $Y = \prescript{t}{}{(}y_1, ..., y_{d})$ of the differential system $Y' = AY$, we associate the remainder
\begin{equation*}
    \rho(Y)(z): = \sum_{i = 1}^d S_{i}(z)y_i(z).
\end{equation*}

According to \cite[Theorem 6.6, p.~181]{coddingtoncarlson97}, at any regular singular point $\sigma\in\C\cup\{\infty\}$ of $A$, the differential system $Y' = AY$ has a basis of local solutions belonging to the Nilsson class. They can thus be written locally around $\sigma$ as
\begin{equation}
    f(z) = \sum_{e\in\C}\sum_{s\in\Z_{\geq 0}} f_{e, s}(z)(z-\sigma)^e \log(z-\sigma)^s
\end{equation}
where $f_{e, s}$ is holomorphic and $f_{e, s}(\sigma) \neq 0$. If $\sigma = \infty$, $``(z - \infty)"$ is to be understood as ${1}/{z}$.

If $f$ is non-zero, we denote by $\mathrm{ord}_\sigma(f)$ the smallest element of the finite set $\big\{\mathfrak{Re}(e) \;\;\big|\;\; \exists s\in\Z_{\geq 0} \;\;\; f_{e, s} \neq 0 \big\}$. This is the order of vanishing of $f$ at $\sigma$. Note that logarithmic factors don't impact the order of $f$ at $\sigma$. In this context, we have the following inequality (see \cite[Corollary 6.1.2]{chapitre1}, \cite[Theorem 3.1]{fischler2018} for a proof):

\begin{theorem}
\label{Shidlovskii}
    (``Shidlovskii's lemma").
    Let $\Sigma$ be a finite set of points of $\C\cup\{\infty\}$. For every $\sigma\in\Sigma$, let $(Y_j)_{j\in J_\sigma}$ be a finite family of solutions of $Y' = AY$ in the Nilsson class at $\sigma$ and such that the remainders $\big(\rho(Y_j)\big)_{j\in J_\sigma}$ are $\C$-linearly independent functions. 

    There exists a constant $c_1$ which only depends on $A$ and $\Sigma$, such that 
    \begin{equation*}
        \sum_{\sigma\in\Sigma}\sum_{j\in J_\sigma} \mathrm{ord}_\sigma \rho(Y_j) \leq (\Delta+1)(d - \#J_\infty) + c_1,
    \end{equation*}
    where $d$ is the size of the square matrix $A \in M_d\big(\C(z)\big)$.
\end{theorem}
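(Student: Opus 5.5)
The plan is to run Shidlovskii's classical Wronskian argument, following \cite[Theorem 3.1]{fischler2018}. Set $\mathcal{R}:=\mathrm{Span}_\C\{\rho(Y) : Y' = AY\}$, a $\C$-vector space of dimension at most $d$, since $\rho$ is $\C$-linear on the $d$-dimensional solution space (locally, or on a universal cover). Let $\delta := \dim\mathcal{R}$.

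\textbf{Step 1 (the differential equation of $\mathcal{R}$).} The operator $\partial - A$ lets us write every derivative as $\rho(Y)^{(k)} = \sum_i S_{k,i}\,y_i$. Here $S_{1,i}=S_i$ and
\[
S_{k+1,i}=S_{k,i}'+\sum_j S_{k,j}A_{ji}.
\]
Multiplying by a common denominator $D$ of $A$ gives $D^{k-1}S_{k,i}\in\C[z]$, of degree at most $\Delta+(k-1)c$ with $c$ depending only on $A$. The functions in $\mathcal{R}$ are exactly the solutions of a monic linear differential operator $L$ of order $\delta$ with rational coefficients. Fix a fundamental system $R_1,\dots,R_\delta$ of $\mathcal{R}$ and consider the Wronskian $W=\det(R_j^{(k-1)})_{k,j}$. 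Since the $R_j$ are $\rho(Y)$ for a basis of solutions, I write $W$ via the matrix $(S_{k,i})_{k\le\delta,\,i\le d}$ times a $d\times\delta$ block of a fundamental matrix of $Y'=AY$. By Cauchy--Binet, $W=\sum_{I}\det(S_{k,i})_{k\le\delta,\,i\in I}\cdot\det(\text{minor of fundamental matrix})$.

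\textbf{Step 2 (comparison of orders).} The function $W$ is multivalued, but $W^{\prime}/W$ is rational. Its order at any point is well defined, and the sum of orders over all points of $\mathbb{P}^1$ of the form $W=P\cdot\prod(z-\sigma)^{e_\sigma}$ is controlled. Concretely, for each $\sigma$, choose the basis of $\mathcal{R}$ adapted to $\sigma$. Completing $(\rho(Y_j))_{j\in J_\sigma}$ to a basis of $\mathcal{R}$ and using independence, Wronskian lower triangularity (by distinct-exponent reduction: after a change of basis the orders can be assumed pairwise distinct modulo shifting, costing only via the Gauss elimination that keeps $\sum\mathrm{ord}$ at least the original sum, as in \cite{fischler2018}) gives
\[
\mathrm{ord}_\sigma W \;\ge\; \sum_{j\in J_\sigma}\mathrm{ord}_\sigma\rho(Y_j)-\binom{\delta}{2}-c_\sigma,
\]
where $c_\sigma$ bounds the (bounded below) exponents of the other basis elements at $\sigma$. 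These are exponents of the Nilsson solutions of $Y'=AY$ shifted by orders of the $S_i$, so $c_\sigma$ is bounded by something independent of $S$ at finite $\sigma$. At $\infty$ the other $\delta-\#J_\infty$ basis elements can have order as low as $-\Delta-O(1)$, which is exactly the source of the $(\Delta+1)(d-\#J_\infty)$ term. Upper bound: by Cauchy--Binet, $W\cdot\det(\text{fund. matrix}$ denominators$)$ times $D^{\binom{\delta}{2}}$ is a sum of polynomials of degree at most $\delta\Delta+O(1)$ times functions with bounded exponents at all singularities. Thus the total of the orders at finite points, plus the order at $\infty$, is at most $O(1)$, with the identity ``sum of orders of $W$ over $\mathbb{P}^1$ $=0$ up to a constant depending on $A$'' (as $W$ is a solution of a rank-one equation $W'=\mathrm{tr}(\cdot)W$ whose exponents sum via Fuchs' relation). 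Combining: $\sum_{\sigma}\sum_{j}\mathrm{ord}_\sigma\rho(Y_j)\le (\delta-\#J_\infty)(\Delta+1)+c_1\le(\Delta+1)(d-\#J_\infty)+c_1$, using $\delta\le d$.

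\textbf{Main obstacle.} The delicate point is the bookkeeping at $\infty$ together with showing that the constants depend only on $A$ and $\Sigma$ and not on the $S_i$. In particular, one must ensure that apparent singularities of $L$ (which depend on $S$) cancel in the Fuchs-type relation. I would handle this by working with $W$ directly through Cauchy--Binet rather than with $L$, as in \cite[Theorem 3.1]{fischler2018}.
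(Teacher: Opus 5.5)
Your strategy is the argument behind the proof the paper cites, \cite[Theorem 3.1]{fischler2018}, where it is phrased as a Fuchs-type relation for the operator $L$. It consists of the Wronskian $W$ of a basis of the space $\mathcal{R}$ of remainders, the vanishing of $\sum_{\sigma\in\mathbb{P}^1}\mathrm{ord}_\sigma W$, lower bounds at each $\sigma\in\Sigma$ from a basis containing $(\rho(Y_j))_{j\in J_\sigma}$, and lower bounds at all other points.

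When every point of $\mathbb{P}^1$ is an ordinary or regular singular point of $A$, your sketch goes through. In that case $W$ has moderate growth everywhere and $W'/W\in\C(z)$, so $W=c\prod_{\sigma\in\C}(z-\sigma)^{e_\sigma}$ and the total order is exactly $0$. At an ordinary point every element of $\mathcal{R}$ is holomorphic, so $\mathrm{ord}_\sigma W\ge 0$; this is what disposes of the apparent singularities of $L$. At a regular singular point, $\mathrm{ord}_\sigma W\ge \delta r_\sigma-\binom{\delta}{2}$, where $r_\sigma$ is a lower bound for the exponents of $Y'=AY$. This covers the only use of the theorem in the paper, since $\mathscr{A}_n$ is Fuchsian.

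Two of your steps are heavier than needed. No distinct-exponent reduction is required: expanding the determinant gives $\mathrm{ord}_\sigma W\ge\sum_j\mathrm{ord}_\sigma R_j-\binom{\delta}{2}$ for any Nilsson functions $R_j$, with $+\binom{\delta}{2}$ at $\infty$. The Cauchy--Binet detour can likewise be replaced by the local observations above.

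The genuine gap is that the statement does not assume $A$ Fuchsian. Here $A\in M_d(\C(z))$ is arbitrary, and only the chosen $Y_j$ are required to be in the Nilsson class at $\sigma$. Irregular singular points may therefore lie inside or outside $\Sigma$; $\infty$ in Shidlovskii's original $E$-function setting is an example. At such a point your argument fails in three places:
\begin{itemize}
\item $W$ contains a factor $\exp(R(z))$ with $R$ rational having a pole there, so your claim that its order ``at any point is well defined'' is false in the Nilsson sense.
\item There is no Nilsson fundamental system with which to complete $(\rho(Y_j))_{j\in J_\sigma}$ by elements of controlled order.
\item The minors in your Cauchy--Binet expansion are not ``functions with bounded exponents''.
\end{itemize}

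To repair this, you need formal Hukuhara--Turrittin solutions $e^{q(t^{-1/p})}t^{\lambda}\phi(t^{1/p},\log t)$, with generalized order $\mathrm{Re}(\lambda)$. Each differentiation of the exponential factor can lower the order by $\kappa_\sigma+1$ rather than $1$, where $\kappa_\sigma$ is the rank. This gives $\mathrm{ord}_\sigma W\ge\sum_j\mathrm{ord}_\sigma R_j-(\kappa_\sigma+1)\binom{\delta}{2}$ in the generalized sense. It must be combined with the vanishing of the total generalized order of $W$, with the exponential part discarded. This is exactly where the term $(\kappa_\sigma+1)\nu(\nu-1)/2$ in the paper's Remark on $c_1$ comes from. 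As written, your proposal proves the theorem only for systems all of whose singularities are regular.
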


\subsection{A crucial proposition}
\label{paragraphestructureshidlovskii}

We fix $n \in \mathcal{N}$. The quantities $\Lambda_{n, (p, k)}(z_1)$, $\;(p, k)\in\llbracket 0, h\rrbracket \times \llbracket (r+1)n+1, \kappa n\rrbracket$, defined in \sref{Subsection}{paragraphenotationspourlasuite} are linear combinations of the numbers $\zeta_0^{<0>}, ..., \zeta_0^{<N-1>}, \zeta_1, ..., \zeta_{a+h}$. The rest of section $6$ is dedicated to the proof of \sref{Proposition}{propositiondéduitedeshidlovskii} below, that can be interpreted in the following way. By thinking of the $\Lambda_{n, (p, k)}(z_1)$ as linear forms evaluated at the point $\bm{\zeta} = \prescript{t}{}{(}\zeta_0^{<0>}, ..., \zeta_0^{<N-1>}, \zeta_1, ..., \zeta_{a + h})$, we observe that any point $\bm{x} = \prescript{t}{}{(}x_0^{<0>}, ..., x_0^{<N-1>}, x_1, ..., x_{a+h})$ annihilating all of these linear forms cannot be equal to $\bm{\zeta}$. Indeed, such an $\bm{x}$ has to annihilate another family of linear forms, among which at least one is not annihilated by $\bm{\zeta}$. This property is required to apply the linear independence criterion (\sref{Proposition}{critèred'indépendancelinéaire}) in \sref{Section}{partie7}.

\begin{proposition}
\label{propositiondéduitedeshidlovskii}
    Assume that $(h+1)(\kappa-r-1)N > a$ and that $n$ is large enough. Then for every $\bm{x} = \prescript{t}{}{(}x_0^{<0>}, ..., x_0^{<N-1>}, x_1, ..., x_{a+h}) \in \C^{a+h+N}$ such that
    \begin{equation}
    \label{THEequation}
        \forall (p, k)\in\llbracket 0, h\rrbracket \times \llbracket (r+1)n+1, \kappa n\rrbracket \;\;\; \sum_{m=0}^{N-1} \lambda_{n, 0, (p, k)}^{<m>}(z_1) x_0^{<m>} + \sum_{i=1}^{a + h} \lambda_{n, i, (p, k)}(z_1) x_i = 0,
    \end{equation}
    we have
    \begin{equation*} \forall m\in\llbracket 0, N-1\rrbracket,\hspace{0.5cm}
        x_0^{<m>} = 0.
    \end{equation*}
\end{proposition}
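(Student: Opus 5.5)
The plan is to argue by contradiction with Theorem~\ref{Shidlovskii}, following \cite[\S 6]{chapitre1} and \cite[\S 4--5]{fischler2018}. Assume $\bm{x}$ satisfies \eqref{THEequation} while $x_0^{<m_0>}\neq 0$ for some $m_0$.

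\textbf{The differential system.} I introduce a system $Y'=\mathcal{A}Y$ of size $d=N+a+h$ whose unknowns are $\prescript{t}{}{(}y_0^{<0>},\dots,y_0^{<N-1>},y_1,\dots,y_{a+h})$. It is built so that two things hold.
\begin{itemize}
\item For every solution $Y$, the remainder $\rho_p(Y):=\sum_m z^mQ^{<m>}_{n,0,(p,K)}\,y_0^{<m>}+\sum_i z^{K-1}Q_{n,i,(p,K)}\,y_i$, with $K:=(r+1)n+1$, satisfies a differentiation rule. The rule should send the weights at level $k$ to the weights at level $k+1$, up to the normalisation $\delta_{n,k}\den(z_1)^{(r+1)n}/(k-1)!$ and up to the passage $z^{k-1}Q\mapsto z^{k}Q$. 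This uses Lemma~\ref{formulesd'inversiondeFourier} and the recurrence \eqref{récurrencevérifiéeparlesQipk}.
\item The vector $\bm\zeta$ of \eqref{définitiondeszetai} is the value at $z_1$ of an explicit solution, namely the one attached to $\widetilde\Lambda_{n,(p,k)}$ in Lemma~\ref{lemmeexpressiondelambdaenfonctiondez}.
\end{itemize}
The singularities of $\mathcal{A}$ are $0$, $\infty$ and the points $\mu^\ell$. Since $z_0\in\mathcal{D}$, the point $z_1$ is an ordinary point. Hence there is a unique solution $Y_{\bm x}$, holomorphic near $z_1$, with $Y_{\bm x}(z_1)=\bm x$.

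\textbf{Vanishing at $z_1$.} Condition \eqref{THEequation} says exactly that the derivatives of orders $0,\dots,(\kappa-r-1)n-1$ of the functions $\rho_p(Y_{\bm x})$, $0\le p\le h$, vanish at $z_1$. Differentiating from the level $K$ onward removes the polynomial parts $V^{[\infty]}_{n,p}$, since their degree is below $(r+1)n$. Therefore each $\rho_p(Y_{\bm x})$ has order at least $(\kappa-r-1)n$ at $\sigma=z_1$. The weights $S_i$ have degree $\Delta\le (r+1)n+O(1)$ by \eqref{pasdedivergencepolynomialenilogarithmique} and \eqref{décompositiondesQ0}.

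\textbf{Vanishing at $\infty$.} At $\sigma=\infty$ I take the solutions $Y_1,\dots,Y_N$ whose remainders are, up to the normalisation, the functions $(\mu^{-\ell}z)^{K-1}S_{n,p}^{[\infty](K-1)}(\mu^{-\ell}z)$. By Proposition~\ref{existencedescij}$(i)$ these vanish at $\infty$ to order about $\omega n+K-1-rn$. I also take any other solution whose remainder vanishes at $\infty$ to large order; these families together form $J_\infty$. Plugging both families into Theorem~\ref{Shidlovskii} should give
\[
(h+1)(\kappa-r-1)n+\sum_{J_\infty}\mathrm{ord}_\infty \le \big((r+1)n+O(1)\big)\big(d-\#J_\infty\big)+c_1 .
\]
With $\#J_\infty\approx N(h+1)$, I expect the inequality to become incompatible with $(h+1)(\kappa-r-1)N>a$ once $n$ is large. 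The intended scaling is that only $a/N$-many independent directions remain once the $L$-value coordinates are packaged through the Fourier coefficients $\hat f(\ell)$.

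\textbf{Main obstacle.} The hardest step is the linear independence hypothesis of Theorem~\ref{Shidlovskii}, together with choosing $J_{z_1}$ and $J_\infty$ so that the count closes. One must show that the remainders $\rho_p(Y_{\bm x})$, $0\le p\le h$, together with the remainders at $\infty$, are $\C$-linearly independent, and this is where $x_0^{<m_0>}\neq 0$ must be used. My first attempt follows \cite[\S 6]{chapitre1}. Suppose a nontrivial relation holds. Continue it analytically around the singularities $\mu^\ell$, using the monodromy of $\Li_i$, which adds $2i\pi\log^{i-1}/(i-1)!$ terms. This should force the coefficients of the $y_i$ to cancel, leaving a relation among the $Q^{<m>}_{n,0,(p,K)}$ alone. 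That relation must then be excluded using the pole/degree structure from \eqref{pasdedivergencepolynomialenilogarithmique}, and the exact orders at $\infty$ computed from the distinct degrees $-\omega n-p$ of the functions $F_n^{(p)}$.
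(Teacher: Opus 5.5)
Your plan has a genuine gap at its core: the remainders you feed into Theorem~\ref{Shidlovskii} do not satisfy its hypotheses. That theorem needs one fixed weight vector $(S_1,\dots,S_d)$ and several solutions $Y_j$. Your $\rho_p(Y_{\bm x})$, $0\le p\le h$, are instead a single solution $Y_{\bm x}$ paired with $h+1$ different weight vectors, because the weights $Q_{n,i,(p,K)}$ depend on $p$; the same objection applies to your remainders at $\infty$. The paper points out exactly this obstruction and removes it with an idea absent from your proposal. It works at level one, where $Q_{n,i+p,p}=(-1)^p(i)_p\,z^{rn}P_{n,i}$ (as defined after Lemma~\ref{lesSsontdesclenlespolylogs}), so the dependence on $p$ is only an index shift and a scalar. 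The twisted combinations $\rho_{n,q}=\sum_{p=0}^{q}\binom{q}{p}(-\log z)^{q-p}f_{n,p}$ then become $\prescript{t}{}{\mathcal{S}_n}\cdot Y^{\{\ell\}}_{n,q}$ for $h+1$ genuinely distinct solutions of one enlarged system $\mathscr{A}_n$ of size $N(h+1)+b_n$. The single weight vector $\mathcal{S}_n$ is built from the $T_{n,p}(\mu^\ell z)/p!$ and the $z^{rn}P_{n,i}$. Their linear independence is obtained not by monodromy but by a decreasing induction that eliminates the top coordinate. The only delicate step, $i_{\max}=h+1$, needs a nonzero residue at $\mu^{-(\ell+\ell_0)}$; that residue is a nonzero multiple of $\xi^{\{\ell_0\}}_{n,0}$. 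This is exactly where "$x_0^{<m>}$ not all zero" is used, through the Vandermonde choice of $\ell_0$.

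Even granting a valid setup, your count cannot close, for three reasons.
\begin{enumerate}
\item By using weights at level $K$, you only get order $(\kappa-r-1)n$ at $z_1$ while still paying $\Delta\approx(r+1)n$ per coordinate. The paper keeps level-one weights and adds the Taylor correction $T_{n,p}$, of degree $<(r+1)n$ and carried by its own coordinates, so each $f_{n,p}$ vanishes to order $\kappa n$ at $z_1$.
\item The factor $N$ in the hypothesis does not come from packaging through $\hat f(\ell)$. It comes from the $\mu$-invariance of $z^{rn}P_{n,i}$, which lets the same system and weights carry the shifted remainders $\rho_{n,q}(\mu^\ell z)$. These vanish to order $\kappa n$ at each of the $N$ points $\mu^{-\ell}z_1$.
\item Your orders at $\infty$ are wrong. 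Proposition~\ref{existencedescij}$(i)$ controls the decay of $F_n(t)$ as $t\to+\infty$, which governs the regularity at $z=1$ recorded in \eqref{pasdedivergencepolynomialenilogarithmique}. By contrast, $z^{K-1}S^{[\infty](K-1)}_{n,p}(z)$ is a power series in $1/z$ that is only $O(1/z)$.
\end{enumerate}
In the paper, the remainders at $0$ and $\infty$ are the $\tau_{n,u}$, $1\le u\le b_n$, of orders $\ge rn$ and $\ge -(r+1)n$. Taking $\#J_\infty=b_n$ removes the $b_n$ polylogarithmic coordinates from the right-hand side and yields $\big((h+1)(\kappa-r-1)N-b_n\big)n\le (h+1)N+c_1$. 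This is impossible for large $n$, since $b_n\le a$ and $c_1$ is bounded independently of $n$.
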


To prove this proposition, we adopt a similar strategy as \cite[section 4.5]{fischler2026}, even though the multiplicity estimate will be applied to more points. We first define the integer
\begin{equation}
\label{définitiondebn}
    b_n: = \max \Big\{ i \in \llbracket 1, a \rrbracket \;\;\Big|\;\; P_{n, i} \neq 0 \Big\},
\end{equation}
which exists because the polynomials $P_{n, i}$ are defined by \eqref{definitiondesPki} and the integers $c_{n, i, j}$ obtained from \sref{Proposition}{existencedescij} are not all zero. This integer satisfies $P_{n, b_n} \neq 0$ and $P_{n, i} = 0$ for $i > b_n$. Because of definitions \eqref{définitiondesQip}, \eqref{récurrencevérifiéeparlesQipk} and \eqref{definitiondespetitslambdas}, the polynomials $Q_{n, i, (p, k)}$ and the coefficients $\lambda_{n, i, (p, k)}$ all vanish for $b_n+p+1\leq i\leq a+h$. In particular, they vanish for $i > b_n + h$.

\smallskip
We consider, for $\;0\leq\ell\leq N-1\;$, the polynomials 
\begin{equation}
    \label{definitiondesQ0l}
        Q_{n, 0, (p, k)}^{\{\ell\}}(z) := \mu^{\ell(k-1)}Q_{n, 0, (p, k)}(\mu^\ell z).
\end{equation}

Equation $(iii.b)$ of \sref{Lemma}{formulesd'inversiondeFourier} evaluated at $z=z_1$ yields
\begin{equation*}
\label{fourieren-1}
    z_1^m Q_{n, 0, (p, k)}^{<m>}(z_1) = \frac{(z_1)^{k-1}}{N}\sum_{\ell=0}^{N-1}\mu^{-\ell m} Q_{n, 0, (p, k)}^{\{\ell\}}(z_1), \hspace{1cm}0\leq m\leq N-1.
\end{equation*}
Thus, equation \eqref{definitiondespetitslambdas} yields
\begin{equation}
\label{expressiondespetitslambdasaveclesQ}
    \begin{split}
        &\lambda_{n, i, (p, k)}(z_1) = \frac{\delta_{n, k}}{(k-1)!} \den(z_1)^{(r+1)n} z_1^{k-1} Q_{n, i, (p, k)}(z_1), \hspace{2.6cm}1\leq i\leq {b_n} +h,\\
        &\lambda_{n, 0, (p, k)}^{<m>}(z_1) = \frac{\delta_{n, k}}{(k-1)!} \den(z_1)^{(r+1)n}\frac{z_1^{k-1}}{N} \sum_{\ell = 0}^{N-1} \mu^{-\ell m} Q_{n, 0, (p, k)}^{\{\ell\}}(z_1), \hspace{1cm} 0\leq m\leq N-1.
    \end{split}
\end{equation}

\smallskip
Assume by contradiction that \sref{Proposition}{propositiondéduitedeshidlovskii} is false. Recall that by definition of $b_n$, all of the terms in the second sum with $i > b_n + h$ in \eqref{THEequation} vanish. Hence there exist $\bm{x}_n = \prescript{t}{}{(}x_{n, 0}^{<0>}, ..., x_{n, 0}^{<N-1>}, x_{n, 1}, ..., x_{n, b_n+h})\in\C^{b_n+h+N}$ with
\begin{equation}
\label{hypothèsesurx}
    \forall (p, k)\in\llbracket 0, h\rrbracket \times \llbracket (r+1)n+1, \kappa n\rrbracket \;\;\; \sum_{m=0}^{N-1} \lambda_{n, 0, (p, k)}^{<m>} x_0^{<m>} + \sum_{i=1}^{b_n + h} \lambda_{n, i, (p, k)} x_i = 0
\end{equation}
and the coefficients $x_0^{<m>}, 0\leq m\leq N-1$, not all zero. Then, since $(\mu^{-\ell m})_{0\leq \ell, m\leq N-1}$ is an invertible Vandermonde matrix, there exists $\ell_0\in\llbracket 0, N-1\rrbracket$ such that
\begin{equation}
\label{hypothèsesurl0}
    \sum_{m=0}^{N-1} \mu^{-\ell_0 m} x_0^{<m>} \neq 0.
\end{equation}

We define for $0\leq\ell\leq N-1$ and $1\leq i\leq b_n+h$:
\begin{equation}
\label{definitiondesxi}
    \xi_{n, 0}^{\{\ell\}} := \sum_{m=0}^{N-1} \frac{\mu^{-\ell m}}{N} x_{n, 0}^{<m>}, \hspace{1cm} \xi_{n, i} := x_{n, i}.
\end{equation}
By using \eqref{expressiondespetitslambdasaveclesQ} and \eqref{hypothèsesurx}, we readily obtain
\begin{equation}
\label{hypothesesurlesxini}
    \forall (p, k) \in \llbracket 0, h \rrbracket \times \llbracket (r+1)n+1, \kappa n \rrbracket \hspace{1cm}
    \sum_{\ell=0}^{N-1} \xi_{n, 0}^{\{\ell\}}Q_{n, 0, (p, k)}^{\{\ell\}}(z_1)  + \sum_{i=1}^{{b_n} +h}Q_{n, i, (p, k)}(z_1)\xi_{n, i} = 0. 
\end{equation}

\smallskip
In \sref{Subsection}{constructiondesfp}, we use the coefficients $\xi_{n, 0}^{\{\ell\}}$ and $\xi_{n, i}$ to construct functions $f_{n, p}$ with a high order of vanishing at $z = z_1$. They can be shifted to obtain functions $f_{n, p}(\mu^\ell z)$ with a high order of vanishing at $z = \mu^{-\ell} z_1$, $\;\;0\leq \ell\leq N-1$.

From these functions, we construct in \sref{Subsection}{constructiondesrhoq} new functions $\rho_{n, q}(\mu^\ell z)$ with the same order of vanishing, and that can be seen as remainders in the context of \sref{Subsection}{paragrapheénoncédeShidlovskii}.

We construct in \sref{Subsection}{constructiondestaubeta} other functions $\tau_{n, u}$ that can be seen as remainders for the same differential system, and with a high order of vanishing at $0, 1$ and $\infty$.

All of these remainders with high orders of vanishing allow us to obtain, for $n$ large enough, a contradiction in \sref{Subsection}{appplicationdeShidlovskii} when applying ``Shidlovskii's lemma" (\sref{Theorem}{Shidlovskii}). This completes the proof of \sref{Proposition}{propositiondéduitedeshidlovskii}.

\subsection{Construction of the functions $f_{n, p}$}
\label{constructiondesfp}

We consider an open and simply connected neighborhood $\mathscr{D}$ of $z_1$ such that $\mathscr{D}^{-1} \subseteq \mathcal{D}$. For $\log(\mu^\ell z)$ to be well defined for any $z\in\mathscr{D}$ and $\ell\in\llbracket 0, N-1\rrbracket$, we also assume that $\mu^\ell\mathscr{D} \subset \mathcal{U}$ for $\ell\in\llbracket 0, N-1\rrbracket$, where the open set $\mathcal{U}$ is defined in \sref{Notation}{notations}. We define by induction holomorphic functions $g_{n, i}: \mathscr{D} \to \C$ as follows:
\begin{equation*}
    \begin{cases}
        g_{n, 1}(z_1) = \xi_{n, 1}, &g'_{n, 1}(z) = \sum_{\ell=0}^{N-1} \frac{\xi_{n, 0}^{\{\ell\}}}{z(1-\mu^\ell z)}, \\
        g_{n, i}(z_1) = \xi_{n, i}, &g'_{n, i}(z) = \frac{-1}{z}g_{n, i-1}(z), \hspace{2cm} 2\leq i\leq b_n+h.
    \end{cases}
\end{equation*}

For $0 \leq p \leq h$, we then define
\begin{equation}
\label{définitiondesfp}
    f_{n, p}(z) := T_{n, p}(z) + \sum_{i=1}^{{b_n} +h} Q_{n, i, p}(z)g_{n, i}(z),
\end{equation}
where $T_{n, p}(z)$ is the opposite of the Taylor polynomial of order $(r+1)n-1$ of $\sum_{i=1}^{{b_n} +h} Q_{n, i, p}(z)g_{n, i}(z)$ at $z_1$, so that $\mathrm{ord}_{z_1}(f_{n, p}) \geq (r+1)n$.

It turns out that the order of vanishing of $f_{n, p}$ at $z_1$ is much larger than this. Indeed, the substitution $z \xleftarrow[]{} \mu^\ell z$ in \eqref{récurrencevérifiéeparlesQipk}, along with equations \eqref{definitiondesQ0l} and \eqref{lesQnipksontdansQzN}, yields
\begin{equation*}
        Q_{n, 0, (p, k+1)}^{\{\ell\}}(z) = (Q_{n, 0, (p, k)}^{\{\ell\}})'(z) + \frac{1}{z(1-\mu^{\ell} z)}Q_{n, 1, (p, k)}, \hspace{1cm}Q_{n, 0, (p, 1)}^{\{\ell\}} = 0.
\end{equation*}
Thus, by \eqref{définitiondesfp} and the definition of the functions $g_{n, i}$, one can show by induction on $k \geq 1$:
\begin{equation*}
    f_{n, p}^{(k-1)}(z) = T_{n, p}^{(k-1)}(z) + \sum_{\ell=0}^{N-1} \xi_{n, 0}^{\{\ell\}}Q_{n, 0, (p, k)}^{\{\ell\}}(z)  + \sum_{i=1}^{{b_n} +h}Q_{n, i, (p, k)}(z)g_{n, i}(z).
\end{equation*}
For $(r+1)n+1 \leq k \leq \kappa n$, The polynomial $T_{n, p}^{(k-1)}$ vanishes because $\mathrm{deg}\big(T_{n, p}\big) \leq (r+1)n-1$. It follows that $f_{n, p}^{(k-1)}(z_1) = 0$ because of \eqref{hypothesesurlesxini}, since $g_{n, i}(z_1) = \xi_{n, i}$. Thus $\mathrm{ord}_{z_1}(f_{n, p}) \geq \kappa n, \;\; 0\leq p\leq h$.

the functions $f_{n, p}$ can be seen in the context of \autoref{paragrapheénoncédeShidlovskii} as the remainders associated with the solution $\prescript{t}{}{(}1, g_{n, 1}, ..., g_{n, {b_n} +h})$ of the differential system
\begin{equation}
\label{premiersystemediff}
    Y' = \begin{bmatrix}
        0 &0 &0 &\hdots &0 &0 \\
        \sum_{\ell=0}^{N-1} \frac{\xi_{n, 0}^{\{\ell\}}}{z(1-\mu^\ell z)} &0 &0 &\hdots &0 &0 \\
        0 &\frac{-1}{z} &0 &\hdots &0 &0 \\
        0 &0 &\frac{-1}{z} &\hdots &0 &0 \\
        \vdots &\vdots &\vdots &\ddots &\vdots &\vdots \\
        0 &0 &0 &\hdots &\frac{-1}{z} &0 
    \end{bmatrix}Y
\end{equation}
weighted by the polynomials $T_{n, p}, Q_{n, 1, p}, ..., Q_{n, {b_n} +h, p}$. This is not suitable to apply \sref{Theorem}{Shidlovskii}: we have only one solution $(g_{n, i})_i$ weighted by different families $(Q_{n, i, p})_i$ depending on $p$ to obtain the remainders $f_{n, p}$. We should instead have different solutions $(y_{n, i, q})_i$, weighted by only one family $(S_{n, i})_i$ independent of $q$ to obtain some remainders $\rho_{n, q}$.

\smallskip
We will construct all of these objects related to the differential system \eqref{grossystemediffcompliqué} in \autoref{constructiondesrhoq}. Before that, notice that we can easily obtain $N$ times as many functions with high orders of vanishing by considering the substitutions $z \xleftarrow[]{} \mu^\ell z$. Indeed, we have
\begin{equation}
\label{annulationdesfp}
    \forall p\in\llbracket 0, h \rrbracket, \;\forall \ell \in \llbracket 0, N-1 \rrbracket, \hspace{0.8cm} \mathrm{ord}_{\mu^{-\ell} z_1}\Big(f_{n, p}(\mu^\ell z)\Big) \geq \kappa n.
\end{equation}
The functions $g_{n, i}(\mu^\ell z)$ obey the same differentiation rules as $g_{n, i}(z)$, except for $i = 1$. Notice that by adding $N-1$ rows of zeroes at the top of the matrix in \eqref{premiersystemediff} and then replacing its first column by the $N$ columns 
\begin{equation*}
    \begin{bmatrix}
        0 \\
        \vdots \\
        0 \\
        \sum_{m=0}^{N-1} \frac{\xi_{n, 0}^{\{m\}}}{z(1-\mu^{m+\ell} z)} \\
        0 \\
        0 \\ 
        \vdots \\
        0
    \end{bmatrix}, \hspace{2cm} 0\leq \ell\leq N-1,
\end{equation*}
\noindent we obtain a differential system that admits $\prescript{t}{}{\big(}0, ..., 0, 1, 0, ..., 0, g_{n, 1}(\mu^\ell z), ..., g_{n, {b_n} +h}(\mu^\ell z)\big)$, as solutions, $0\leq\ell\leq N-1$, with the $1$ on the $(\ell+1)$-th slot.
The functions $f_{n, p}(\mu^\ell z)$ can be seen as the remainders associated with these solutions, weighted by the polynomials $T_{n, p}(z), T_{n, p}(\mu z), ..., \break T_{n, p}(\mu^{N-1} z), Q_{n, 1, p}, ..., Q_{n, {b_n} +h, p}$.

\subsection{Construction of the remainders $\rho_{n, q}$}
\label{constructiondesrhoq}

We define the functions
\begin{equation}
\label{definitiondesrhoq}
    \rho_{n, q}(z) := \sum_{p=0}^q \binom{q}{p} \Big(-\log(z)\Big)^{q-p}f_{n, p}(z), \hspace{2cm} 0\leq q\leq h.
\end{equation}
This subsection introduces a new differential system, vectors of solutions and polynomials such that $\rho_{n, q}(\mu^\ell z)$ can be seen as remainders.

\smallskip
Let $0_{u, v}$, resp. $0_u$, denote the zero matrix of $M_{u, v}(\C)$, resp. $M_u(\C)$. We denote by $J_u$ the matrix of $M_u(\C)$ with $1$ on the sub-diagonal and $0$ everywhere else. We consider the differential system $Y' = \mathscr{A}_n Y$ with
\begin{equation}
    \label{grossystemediffcompliqué}
    \mathscr{A}_n :=
        \begin{bmatrix}
            0_N &0_N &0_N &\hdots &0_N &0_N &0_{N, {b_n} } \\
            \frac{-1}{z}I_N &0_N &0_N &\hdots &0_N &0_N &0_{N, {b_n} } \\
            0_N &\frac{-1}{z}I_N &0_N &\hdots &0_N &0_N &0_{N, {b_n} } \\
            0_N &0_N &\frac{-1}{z}I_N &\hdots &0_N &0_N &0_{N, {b_n} } \\
            \vdots &\vdots &\vdots &\ddots &\vdots &\vdots &\vdots \\
            0_N &0_N &0_N &\hdots &\frac{-1}{z}I_N &0_N &0_{N, {b_n} } \\
            0_{{b_n} , N} &0_{{b_n} , N} &0_{{b_n} , N} &\hdots &0_{{b_n} , N} &\mathscr{B}_n &\frac{-1}{z}J_{{b_n} } 
        \end{bmatrix} \in M_{N(h+1)+{b_n} }\big(\C(z)\big).
    \end{equation}
Notice that the matrix $\mathscr{A}_n$ only depends of $n$ through the integer $b_n$ and the block $\mathscr{B}_n\in M_{{b_n} , N}\big(\C(z)\big)$ whose columns are
\begin{equation*}
    \begin{bmatrix}
        \sum_{m=0}^{N-1} \frac{\xi_{n, 0}^{\{m\}} z}{z(1-\mu^{m+\ell} z)} \\
        0 \\
        \vdots \\
        0
    \end{bmatrix}, \hspace{2cm}{0\leq \ell\leq N-1}.
\end{equation*}

\smallskip
We also consider the polynomials
\begin{equation*}
    \begin{cases}
        S_{n, i}(z) = \frac{1}{(h-i)!} T_{n, h-i}(z), &0\leq i\leq h, \\
        S_{n, i}(z) = z^{rn}P_{n, i-h}(z), &h+1\leq i\leq {b_n} +h,
    \end{cases}
\end{equation*}
and for $0\leq q\leq h$ the functions 
\begin{equation*}
    \begin{cases}
        y_{n, i, q}(z) = 0, &\hspace{1cm}0\leq i\leq h-q-1, \\
        y_{n, i, q}(z) = \frac{q!}{(i+q-h)!}\Big(-\log(z)\Big)^{i+q-h}, &\hspace{1cm}h-q\leq i\leq h, \\
        y_{n, i, q}(z) = \sum_{p=0}^q \binom{q}{p} (-1)^p(i-h)_p\Big(-\log(z)\Big)^{q-p}g_{n, i-h+p}(z), &\hspace{1cm}h+1\leq i\leq {b_n} +h.
    \end{cases}
\end{equation*}
We organize them inside column vectors by defining
\begin{align*}
    \mathcal{S}_n := \prescript{t}{}{\Big(}S_{n,0}(z), S_{n,0}(\mu z), ..., S_{n,0}(\mu^{N-1}z), ...\;...,  S_{n,h}(z), &S_{n,h}(\mu z), ..., S_{n,h}(\mu^{N-1}z), \\
    &S_{n, h+1}(z), S_{n, h+2}(z), ..., S_{n, {b_n} +h}(z)\Big)
\end{align*}
and, for $0\leq q\leq h$ and $0\leq\ell\leq N-1$,
\begin{align*}
        Y_{n, q}^{\{\ell\}}(z) := \prescript{t}{}{\Big(}0, \;\;..., \;\;y_{n, 0, q}(\mu^\ell z), \;\;..., \;\;0, \;\;...\;\;..., \;\;0, \;\;..., \;\;&y_{n, h, q}(\mu^\ell z), \;\;..., \;\;0, \\
        &y_{n, h+1, q}(\mu^\ell z), ..., y_{n, {b_n} +h, q}(\mu^\ell z)\Big).
\end{align*}
In the definition of $Y_{n, q}^{\{\ell\}}(z)$, up to the index $i=h$, each chunk has length $N$ and contains $N-1$ zeroes and $y_{n, i, q}(\mu^\ell z)$ at the $(\ell+1)$-th slot. Starting from the index $i=h+1$, each chunk has length $1$ and only contains $y_{n, i, q}(\mu^\ell z)$.

Notice that because of the definition \eqref{definitiondesPki} of the polynomials $P_{n, i}$, the polynomials $S_{n, i}$ with $i \geq h+1$ satisfy $S_{n, i}(\mu^\ell z) = S_{n, i}(z)$ for every $\ell\in\llbracket 0, N-1\rrbracket$.

\begin{lemma}
    In this context, 
    \begin{enumerate}
        \item[$(i)$] for $0\leq q\leq h$ and $0\leq\ell\leq N-1$, $Y_{n, q}^{\{\ell\}}$ is a solution of the differential system $Y' = \mathscr{A}_n Y$;
        \item[$(ii)$] the functions $\rho_{n, q}(\mu^\ell z)$ are the remainders associated with these solutions and weighted by the polynomials $\mathcal{S}_n$:
        \begin{equation*}
            \forall q\in\llbracket 0, h\rrbracket \;\;\forall\ell\in\llbracket 0, N-1\rrbracket \;\;\;\;\;\;\rho_{n, q}(\mu^\ell z) = \sum_{i=0}^{{b_n} +h} S_{n, i}(\mu^\ell z) y_{n, i, q}(\mu^\ell z) = \prescript{t}{}{\mathcal{S}_n}\cdot Y_{n, q}^{\{\ell\}} \:;
        \end{equation*}
        \item[$(iii)$] for $\ell\in\llbracket 0, N-1 \rrbracket$, the family $\Big(\rho_{n, q}(\mu^\ell z)\Big)_{0\leq q\leq h}$ is linearly independent over $\C$.
    \end{enumerate}
\end{lemma}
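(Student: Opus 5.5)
Parts (i) and (ii) are direct computations; part (iii) rests on comparing growth in $\log(z)$. We work with $\ell=0$ throughout and recover general $\ell$ by the substitution $z\leftarrow\mu^\ell z$. That substitution is harmless: it preserves $\frac{d}{dz}(-\log(\mu^\ell z))=-1/z$, the weights $S_{n,i}$ with $i\ge h+1$ are invariant, and the only entries of $\mathscr{A}_n$ that see $\ell$ are those in the block $\mathscr{B}_n$. This block is built exactly so that the $(\ell+1)$-th column reproduces $\sum_m \xi_{n,0}^{\{m\}}/(z(1-\mu^{m+\ell}z))$, which is the derivative of $g_{n,1}(\mu^\ell z)$.

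\textbf{Part (i).} We check $Y'=\mathscr{A}_nY$ row by row.
\begin{itemize}
\item First $N$-block ($i=0$): $y_{n,0,q}$ is constant, equal to $1$ if $q=h$ and $0$ otherwise, so its derivative vanishes.
\item Blocks $1\le i\le h$: we need $y_{n,i,q}'=-\frac1z y_{n,i-1,q}$. Differentiating $\frac{q!}{(i+q-h)!}(-\log z)^{i+q-h}$ gives $-\frac1z\frac{q!}{(i+q-h-1)!}(-\log z)^{i+q-h-1}$, which is $-\frac1z y_{n,i-1,q}$. The boundary index $i=h-q$ works because the previous entry is $0$.
\item Last rows ($i\ge h+1$): we differentiate
\[
y_{n,i,q}=\sum_p\binom qp(-1)^p(i-h)_p(-\log z)^{q-p}g_{n,i-h+p}.
\]
Using $g_{n,j}'=-\frac1z g_{n,j-1}$ for $j\ge2$, together with the identities $(q-p)\binom qp=q\binom{q-1}{p}$ and $(i-h)_p=(i-h-1)_p+p\,(i-h)_{p-1}$, the log-derivative terms and the $g'$ terms should recombine into $-\frac1z y_{n,i-1,q}$ for $i\ge h+2$. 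For $i=h+1$ they should recombine into $-\frac1z y_{n,h,q}$ plus the $\mathscr{B}_n$ contribution, which arises from the $p=0$ term of $g_{n,1}'$.
\end{itemize}
This binomial bookkeeping is where most of the care goes. I would first check it for $h=1$ and then do the general Pascal-type recombination.

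\textbf{Part (ii).} We expand $\rho_{n,q}=\sum_p\binom qp(-\log z)^{q-p}f_{n,p}$ and insert $f_{n,p}=T_{n,p}+\sum_i Q_{n,i,p}g_{n,i}$.
\begin{itemize}
\item The $T$ part gives $\sum_p\binom qp(-\log z)^{q-p}T_{n,p}$. Setting $p=h-i$, the weight is $S_{n,i}=T_{n,h-i}/(h-i)!$ and the coefficient is $\binom{q}{h-i}(h-i)!(-\log z)^{q-h+i}=\frac{q!}{(q-h+i)!}(-\log z)^{q-h+i}$, which is $y_{n,i,q}$.
\item The $Q$ part: by \eqref{définitiondesQip}, $Q_{n,j+p,p}=(-1)^p(j)_pz^{rn}P_{n,j}$. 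Reindexing with $j=i-h$ gives $S_{n,i}=z^{rn}P_{n,i-h}$ and coefficient $\sum_p\binom qp(-1)^p(i-h)_p(-\log z)^{q-p}g_{n,i-h+p}$. This is $y_{n,i,q}$; terms with $j>b_n$ vanish, which matches the truncation at $b_n+h$.
\end{itemize}

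\textbf{Part (iii).} Suppose $\sum_q c_q\rho_{n,q}=0$ with the $c_q$ not all zero, and let $Q$ be the largest index with $c_Q\ne0$. We regard $\rho_{n,q}$ as a polynomial in $L=-\log z$ of degree $q$ whose leading coefficient is $f_{n,0}$. This coefficient is not identically zero: $f_{n,0}$ has ordinary (meromorphic) Nilsson-type growth, and $f_{n,0}^{(k-1)}$ involves $Q_{n,b_n,(0,k)}$, which is non-zero since $P_{n,b_n}\ne0$. The functions $f_{n,p}$ are single-valued on the neighbourhood $\mathscr{D}$ and extend analytically along loops around $0$. Continuing the relation once around $z=0$ shifts $L$ by $2i\pi$, while the $f_{n,p}$ change only by monodromy that keeps them in a $\log$-polynomial space of bounded degree.

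The clean route is to use that $1,L,\dots,L^h$ are linearly independent over the field of functions meromorphic near a generic point, taken together with finitely many $\log$-powers of lower structure. Concretely, I would apply $(z\frac{d}{dz})$-type operators, or take iterated monodromy differences $\rho(\gamma z)-\rho(z)$, $Q$ times. This kills all lower terms and leaves $c_Q Q!(2i\pi)^Q f_{n,0}$ up to the monodromy of $f_{n,0}$ itself.

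This is the main obstacle: $g_{n,i}$ itself carries logarithms at $0$, so the $f_{n,p}$ are not $\log$-free. If the monodromy argument gets messy, I would fall back to a simpler argument. The differential system \eqref{premiersystemediff} has rational coefficients, so the differential field generated over $\C(z)$ by $g_{n,\cdot}$ is an iterated primitive extension. I would then argue, by a Kolchin–Ostrowski-type transcendence statement, that either $\log z$ is transcendental over the field generated by the $f_{n,p}$, or a degree comparison in $\log$ directly forces $c_Q f_{n,0}=0$. Since $f_{n,0}\ne0$, this gives the contradiction.
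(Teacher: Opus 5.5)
Your parts (i) and (ii) are the paper's direct computations and go through, up to two slips. First, $y_{n,0,h}=h!$, not $1$. Second, for $i=h+1$ the logarithmic-derivative terms cancel \emph{completely} against the $g'_{n,p+1}$ terms with $p\geq 1$. So $\frac{d}{dz}\big[y_{n,h+1,q}(\mu^\ell z)\big]$ is just the entry of $\mathscr{B}_n$ in column $\ell$ times $y_{n,h,q}(\mu^\ell z)$, with no extra $-\frac1z y_{n,h,q}$ term (the first row of $J_{b_n}$ is zero).

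Part (iii) has a genuine gap. You never use the standing hypothesis that $\xi_{n,0}^{\{\ell_0\}}=\frac1N\sum_m\mu^{-\ell_0 m}x_0^{<m>}\neq 0$ for some $\ell_0$, and without it (iii) is false. Indeed, $\bm{x}=0$ satisfies the homogeneous system imposed on $\bm{x}$, and then every $g_{n,i}$ vanishes identically, hence so does every $T_{n,p}$, $f_{n,p}$ and $\rho_{n,q}$. So your justification of $f_{n,0}\not\equiv 0$ via $Q_{n,b_n,(0,k)}\neq 0$ is invalid: a non-zero coefficient in front of the $g_{n,i}$ says nothing when the $g_{n,i}$ themselves vanish or cancel. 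Moreover, reading $\rho_{n,q}$ as a polynomial of degree $q$ in $-\log z$ with leading coefficient $f_{n,0}$ is not meaningful. The $f_{n,p}$ already contain $\log z$ through the $g_{n,i}$: $g'_{n,1}$ has a pole at $0$, and the $g_{n,i}$ for $i\geq 2$ are iterated primitives of it. The function $g_{n,1}$ is also logarithmic at the points $\mu^{-m}$. Hence monodromy around $0$ does not isolate $f_{n,0}$, and the Kolchin--Ostrowski fallback is not an argument as stated. You flag this as the main obstacle, but it is left open.

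The missing idea, which is the paper's route, is to pass to the single solution $Y=\sum_q\nu_q Y_{n,q}^{\{\ell\}}$ of $Y'=\mathscr{A}_nY$. It satisfies $\sum_i S_{n,i}(\mu^\ell z)\,y_{n,i}(\mu^\ell z)=0$, its coordinate of index $i_{\min}=h-q_{\max}$ is the non-zero constant $\nu_{q_{\max}}q_{\max}!$, and the earlier coordinates vanish. Then exploit the strictly lower-triangular shape of $\mathscr{A}_n$ by descending induction. Suppose $\sum_{i\leq i_{\max}}R_i\,y_{n,i}(\mu^\ell z)=0$ with polynomial $R_i$ and $R_{i_{\max}}\neq 0$ of degree $v$. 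Differentiating $v+1$ times removes the top term. The new coefficient of $y_{n,i_{\max}-1}$ is non-zero because its summand $c(z)R_{i_{\max}}^{(v)}$, with $c$ the relevant sub-diagonal entry of $\mathscr{A}_n$, has a non-zero residue, while the other summands are polynomials or derivatives of rational functions. For the $-\frac1z$ entries this residue is $-R_{i_{\max}}^{(v)}$ at $0$. When crossing the $\mathscr{B}_n$ block, from index $h+1$ to $h$, it is $-\xi_{n,0}^{\{\ell_0\}}R_{i_{\max}}^{(v)}$ at $\mu^{-(\ell+\ell_0)}$; this is exactly where the hypothesis on $\bm{x}$ is needed. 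Clearing denominators and descending to $i_{\min}$ leaves a non-zero polynomial times a non-zero constant equal to $0$, a contradiction.
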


\begin{proof}
    $(i)\;$We fix $q\in\llbracket 0, h\rrbracket$ and $\ell\in\llbracket 0, N-1\rrbracket.$
    \begin{enumerate}
        \item[$\bullet$] If $i=0$, $\;y_{n, 0, q}(\mu^\ell z)$ is constant, and its derivative is zero.
        \item[$\bullet$] If $1\leq i\leq h-q$, $\;y_{n, i, q}(\mu^\ell z)$ is constant and its derivative is zero, and $\frac{-1}{z}y_{n, i-1, q}(\mu^\ell z)$ is zero as well.
        \item[$\bullet$] If $h-q+1\leq i\leq h$, we have
            \begin{align*}
                \frac{d}{dz}\Big[y_{n, i, q}(\mu^\ell z)\Big] &= \frac{q!}{(i+q-h)!}(i+q-h)\frac{-\mu^\ell}{\mu^\ell z}\Big(-\log(\mu^\ell z)\Big)^{i+q-h-1} \\
                                                        &= \frac{-1}{z}\frac{q!}{(i+q-h-1)!}\Big(-\log(\mu^\ell z)\Big)^{i+q-h-1} \\
                                                        &= \frac{-1}{z}y_{n, i-1, q}(\mu^\ell z).
            \end{align*}
        \item[$\bullet$] If $i=h+1$, we have
            \begin{align*}
                \frac{d}{dz}\Big[y_{n, h+1, q}(\mu^\ell z)\Big] &= \Big(-\log(\mu^\ell z)\Big)^q \Bigg(\sum_{m=0}^{N-1} \frac{\xi_{n, 0}^{\{m\}}}{z(1-\mu^{m+\ell} z)} \Bigg) \\
                &\hspace{5mm}+ \sum_{p=1}^q \binom{q}{p} (-1)^p p!\Big(-\log(\mu^\ell z)\Big)^{q-p} \cdot\frac{-1}{z}g_{n, p}(\mu^\ell z) \\
                &\hspace{5mm}+ \sum_{p=0}^{q-1} \binom{q}{p}(-1)^p p! (q-p) \frac{-1}{z}\Big(-\log(\mu^\ell z)\Big)^{q-p-1}g_{n, p+1}(\mu^\ell z) \\
                &= \Bigg(\sum_{m=0}^{N-1} \frac{\xi_{n, 0}^{\{m\}}}{z(1-\mu^{m+\ell} z)} \Bigg) y_{n, h, q}(\mu^\ell z).
            \end{align*}
            Indeed, the two sums over $p$ are opposite to each other, as can be seen by applying the substitution $p\xleftarrow[]{} p+1$ in the second sum.
            
        \item[$\bullet$] Lastly, if $h+2\leq i\leq b_n+h$, we have
            \begin{align*}
                &\frac{d}{dz}\Big[y_{n, i, q}(\mu^\ell z)\Big] \\
                = \hspace{3mm}&\sum_{p=0}^q \binom{q}{p}(-1)^p(i-h)_p\Big(-\log(\mu^\ell z)\Big)^{q-p}\cdot\frac{-1}{z}g_{n, i-h+p-1}(\mu^\ell z) \\
                &+\hspace{1mm}\sum_{p=0}^{q-1} \binom{q}{p} (-1)^p (i-h)_p (q-p)\frac{-1}{z}\Big(-\log(\mu^\ell z)\Big)^{q-p-1}g_{n, i-h+p}(\mu^\ell z) \\
                = \hspace{3mm}&\frac{-1}{z}\Big(-\log(\mu^\ell z)\Big)^qg_{n, i-h-1}(\mu^\ell z) \\
                &+\hspace{1mm}\frac{-1}{z}\sum_{p=1}^q \frac{q!}{p!(q-p)!}(-1)^p(i-h)_p\Big(-\log(\mu^\ell z)\Big)^{q-p}g_{n, i-h+p-1}(\mu^\ell z) \\
                &+\hspace{1mm}\frac{-1}{z}\sum_{p=1}^q \frac{q!}{(p-1)!(q-p)!}(-1)^{p-1}(i-h)_{p-1}\Big(-\log(\mu^\ell z)\Big)^{q-p} g_{n, i-h+p-1}(\mu^\ell z) \\ 
                = \hspace{3mm}&\frac{-1}{z}\Big(-\log(\mu^\ell z)\Big)^qg_{n, i-h-1}(\mu^\ell z) \\  
                &+\hspace{1mm}\frac{-1}{z}\sum_{p=1}^q \frac{q!}{(q-p)!}(-1)^p\bigg(\frac{(i-h)_p}{p!}-\frac{(i-h)_{p-1}}{(p-1)!}\bigg)\Big(-\log(\mu^\ell z)\Big)^{q-p}g_{n, i-h+p-1}(\mu^\ell z) \\
                = \hspace{3mm}&\frac{-1}{z}\sum_{p=0}^q \binom{q}{p}(-1)^p(i-h-1)_p\Big(-\log(\mu^\ell z)\Big)^{q-p}g_{n, i-h+p-1}(\mu^\ell z) \\
                = \hspace{3mm}&\frac{-1}{z}y_{n, i-1, q}(\mu^\ell z),
            \end{align*}
            by using the formula $\frac{(i-h)_p}{p!}-\frac{(i-h)_{p-1}}{(p-1)!} = \frac{(i-h-1)_p}{p!}$.
    \end{enumerate}

    \smallskip
    $(ii)\;$ We fix $\ell\in\llbracket 0, N-1\rrbracket$. Recall that for $i\geq 1$, the polynomials $Q_{n, i, p}$ defined by \eqref{définitiondesQip} vanish when $i\notin\llbracket p+1, {b_n} +p\rrbracket$ and are in $\Q[z^N]$ otherwise, so that $Q_{n, i, p}(\mu^\ell z) = Q_{n, i, p}(z)$. We use equations \eqref{définitiondesfp} and \eqref{definitiondesrhoq} to compute:
    \begin{small}
    \begin{align*}
        \rho_{n, q}(\mu^\ell z) &= \sum_{p=0}^q \binom{q}{p} \big(-\log(\mu^\ell z)\big)^{q-p} \Bigg[T_{n, p}(\mu^\ell z) + \sum_{i=p+1}^{{b_n} +p} Q_{n, i, p}(z)g_{n, i}(\mu^\ell z) \Bigg] \\
                            &= \sum_{p=0}^q \frac{1}{p!}T_{n, p}(\mu^\ell z) \frac{q!}{(q-p)!}\big(-\log(\mu^\ell z)\big)^{q-p} \\
                            &\hspace{0.5cm}+ \sum_{p=0}^q \binom{q}{p} \big(-\log(\mu^\ell z)\big)^{q-p} \sum_{i=h+1}^{{b_n} +h} Q_{n, i-h+p, p}(z)g_{n, i-h+p}(\mu^\ell z) \hspace{20mm}(i \xleftarrow[]{} i+h-p) \\
                            &= \sum_{i=h-q}^{h} \frac{1}{(h-i)!}T_{n, h-i}(\mu^\ell z) \frac{q!}{(i+q-h)!} \big(-\log(\mu^\ell z)\big)^{i+q-h} \hspace{35mm}(i = h - p) \\
                            &\hspace{0.5cm}+ \sum_{i = h+1}^{{b_n} +h} z^{rn}P_{n, i-h}(z)\sum_{p=0}^q \binom{q}{p} (-1)^p(i-h)_p \big(-\log(\mu^\ell z)\big)^{q-p}g_{n, i-h+p}(\mu^\ell z) \hspace{4mm}\text{\big(from \eqref{définitiondesQip}\big)} \\
                            &= \sum_{i=0}^{{b_n} +h} S_{n, i}(\mu^\ell z) y_{n, i, q}(\mu^\ell z) = \prescript{t}{}{\mathcal{S}_n}\cdot Y_{n, q}^{\{\ell\}}.
    \end{align*}
    \end{small}

    \smallskip
    $(iii)\;$ We fix $\ell\in\llbracket 0, N-1 \rrbracket$ and assume by contradiction that there exist $\nu_0, ..., \nu_h \in\C$ not all zero such that $\sum_{q=0}^h \nu_q\rho_{n, q}(\mu^\ell z) = 0$. We use the result of $(ii)$ above and swap the sums to obtain
    \begin{equation*}
        \sum_{i=0}^{{b_n} +h} S_{n, i}(\mu^\ell z)\sum_{q=0}^h \nu_q y_{n, i, q}(\mu^\ell z) = 0.
    \end{equation*}
    Consider the functions $y_{n, i}(z) := \sum_{q=0}^h \nu_q y_{n, i, q}(z)$. On the one hand, they satisfy
    \begin{equation}
    \label{initialisationdelarécurrence}
        \sum_{i=0}^{{b_n} +h} S_{n, i}(\mu^\ell z)y_{n, i}(\mu^\ell z) = 0.
    \end{equation}
    On the other hand, they are the coordinates of the vector $Y_{n}^{\{\ell\}}(z)$ defined by
    \begin{align*}
        Y_{n}^{\{\ell\}}(z):&= \sum_{q=0}^h \nu_q Y_{n, q}^{\{\ell\}}(z) \\
        &= \prescript{t}{}{\Big(}0, ..., y_{n, 0}(\mu^\ell z), ..., 0, ...\;..., 0, ..., y_{n, h}(\mu^\ell z), ..., 0, \\
        &\hspace{8.5cm}y_{n, h+1}(\mu^\ell z), ..., y_{n, {b_n} +h}(\mu^\ell z)\Big),
    \end{align*}
    which is still a solution of the linear differential system $Y' = \mathscr{A}_n Y$.

    \smallskip
   We denote by $q_{\max}$ the highest index $q$ with $\nu_q \neq 0$, so that 
   \begin{align*}
       y_{n, h-q_{\max}}(z) &= \sum_{q=0}^{q_{\max}} \nu_q y_{n, h-q_{\max}, q}(z) \\
                            &= \nu_{q_{\max}} y_{n, h-q_{\max}, q_{\max}}(z) \hspace{2.5cm}\text{(since $y_{n, i, q} = 0$ when $i<h-q$)} \\
                            &= \nu_{q_{\max}} q_{\max} ! \neq 0.
   \end{align*}
   Let $i_{\min} = h-q_{\max}$, so that $y_{n, i_{\min}}(z)$ is a non-zero constant and $y_{n, i} = 0$ for $i<i_{\min}$.
    We prove by decreasing induction on $i_{\max}\in\llbracket i_{\min}, {b_n} +h\rrbracket$ the following fact:
    \begin{equation*}
        H_{i_{\max}}: \hspace{5mm}\text{\guillemotleft \hspace{2mm}There exist polynomials $R_{i_{\max}, i} \in \C[z], \;i_{\min}\leq i\leq i_{\max},\;$ such that $R_{i_{\max}, i_{\max}} \neq 0$} 
    \end{equation*}
    \begin{equation*}
        \text{and $\sum_{i=i_{\min}}^{i_{\max}} R_{i_{\max}, i}(z) y_{n, i}(\mu^\ell z) = 0$}\hspace{8mm} (*)\hspace{2mm}\text{\guillemotright}.
    \end{equation*}
    According to \eqref{initialisationdelarécurrence}, this holds for $i_{\max} = {b_n} +h$ by letting $R_{{b_n} +h, i}(z) = S_{n, i}(\mu^\ell z)$. The polynomial $R_{b_n+h, b_n+h} = S_{n, b_n+h} = z^{rn}P_{n, b_n}$ is indeed non-zero, because of the definition \eqref{définitiondebn} of $b_n$.
    Now, assume that $H_{i_{\max}}$ holds for some index $i_{\max}\in\llbracket i_{\min} + 1, {b_n} +h\rrbracket$. Let $v$ be the degree of $R_{i_{\max}, i_{\max}}$. By differentiating $v+1$ times equation $(*)$, we get
    \begin{equation*}
        \sum_{i=i_{\min}}^{i_{\max}} \widetilde{R}_{i_{\max}-1, i}(z)y_{n, i}(\mu^\ell z) = 0,
    \end{equation*}
    where the expression of the rational functions $\widetilde{R}_{i_{\max}-1, i}$ can be computed by using the Leibniz rule and the fact that $Y_n^{\{\ell\}}$ is a solution of $Y'=\mathscr{A}_nY$, where $\mathscr{A}_n$ is given by \eqref{grossystemediffcompliqué}. In particular, we notice that
     
    \begin{enumerate}[left=0.5pt]
        \item[$\bullet$] for $i_{\min}\leq i\leq i_{\max}$, the only possible poles for $\widetilde{R}_{i_{\max}-1, i}$ are $0, 1, \mu, \mu^2, ..., \mu^{N-1}$, with multiplicity at most $v+1$ each.
        \item[$\bullet$] $\widetilde{R}_{i_{\max}-1, i_{\max}} = R_{i_{\max}, i_{\max}}^{(v+1)} = 0.$
        \item[$\bullet$] 
        $\widetilde{R}_{i_{\max}-1, i_{\max}-1} =$ 
        \begin{equation*}
            \begin{cases}
                R_{i_{\max}, i_{\max}-1}^{(v+1)} + \sum_{u=0}^v \left(\frac{d}{dz}\right)^{v-u} \left(\frac{-1}{z}R_{i_{\max}, i_{\max}}^{(u)}\right) &\text{if $i_{\max}\neq h+1$}, \\
                R_{i_{\max}, i_{\max}-1}^{(v+1)} + \sum_{u=0}^v \left(\frac{d}{dz}\right)^{v-u} \Bigg(\bigg(\sum_{m=0}^{N-1}\frac{\xi_{n, 0}^{\{m\}}}{z(1-\mu^{m+\ell}z)} \bigg)R_{i_{\max}, i_{\max}}^{(u)}\Bigg) &\text{if $i_{\max} = h+1$}.
            \end{cases}
        \end{equation*}
    \end{enumerate}

    This last rational function is non zero. To prove it, it is enough to find a point in the complex plane where the term with $u = v$ in the sum has a non-zero residue. Indeed, no other term will be able to cancel this residue out, because polynomials and derivatives of rational functions have zero residue.
    \begin{enumerate}
        \item[\textrightarrow] In the case $i_{\max} \neq h+1$, the term in the sum with $u = v$ has residue $-R_{i_{\max}, i_{\max}}^{(v)}$ at $z=0$. This is a non-zero constant because of $H_{i_{\max}}$, since $v = \mathrm{deg}(R_{i_{\max}, i_{\max}})$.
        \item[\textrightarrow] In the case $i_{\max} = h + 1$, the term in the sum with $u = v$ has residue $-\xi_{n, 0}^{ \{ \widetilde{\ell} \} }R_{i_{\max}, i_{\max}}^{(v)}$ at $z=\mu^{-(\ell+\widetilde{\ell})}$. Using definition \eqref{definitiondesxi}, we compute:
        \begin{equation*}
            \xi_{n, 0}^{\{\widetilde{\ell}\}} = \frac{1}{N}\sum_{m=0}^{N-1} \mu^{-\widetilde{\ell} m}x_0^{<m>}.
        \end{equation*}
        According to assumption \eqref{hypothèsesurl0}, this is non-zero for $\widetilde{\ell} = \ell_0$.
    \end{enumerate}
     We thus obtain suitable polynomials for $H_{i_{\max}-1}$ by letting, for $i_{\min}\leq i\leq i_{\max}-1$:
    \begin{equation*}
        R_{i_{\max}-1, i} := z^{v+1}\prod_{m=0}^{N-1}(1-\mu^m z)^{v+1} \widetilde{R}_{i_{\max}-1, i}.
    \end{equation*}
    This concludes the inductive proof.

    \smallskip
    Now, $H_{i_{\min}}$ provides a non-zero polynomial $R_{i_{\min}, i_{\min}}$ such that $R_{i_{\min}, i_{\min}}(z)y_{n, i_{\min}} = 0$, which is a contradiction because $y_{n, i_{\min}}$ is a non-zero constant, by definition of $i_{\min}$.
    That means there cannot exist coefficients $\nu_0, ..., \nu_h\in\C$ not all zero such that $\sum_{q=0}^h \nu_q\rho_{n, q}(\mu^\ell z) = 0$.
\end{proof}

\subsection{Construction of the remainders $\tau_{n, u}$}
\label{constructiondestaubeta}

We consider for $1\leq u\leq {b_n} $ the solutions $\Psi_{n, u} = \prescript{t}{}{(}0, ..., 0, \psi_{n, 1, u}, ..., \psi_{n, {b_n} , u})$ of the differential system $Y' = \mathscr{A}_n Y$, where only the last $b_n$ coordinates can be non-zero, and are given by
\begin{equation*}
    \begin{cases}
        \psi_{n, i, u}(z) = 0, &1\leq i \leq u -1, \\
        \psi_{n, i, u}(z) = \frac{\big(-\log(z)\big)^{i-u}}{(i-u)!}, &u \leq i \leq {b_n} .
    \end{cases}
\end{equation*}

The associated remainders are
\begin{equation}
\label{expressiondestaubeta}
    \tau_{n, u}(z) := \prescript{t}{}{\mathcal{S}_n}\cdot\Psi_{n, u} = \sum_{i = u}^{{b_n} } z^{rn}P_{n, i}(z)\frac{\big(-\log(z)\big)^{i-u}}{(i-u)!}. 
\end{equation}

\begin{lemma}
    The family $\big(\tau_{n, u}(z)\big)_{1\leq u\leq {b_n} }$ is linearly independent over $\C$.
\end{lemma}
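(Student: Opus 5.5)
We argue by contradiction, using a triangular structure in the powers of $\log(z)$. Suppose there exist $\nu_1,\dots,\nu_{b_n}\in\C$, not all zero, with $\sum_{u=1}^{b_n}\nu_u\tau_{n,u}(z)=0$ on some simply connected domain avoiding $0$, on which $\log$ is a fixed branch. Let $u_0$ be the smallest index with $\nu_{u_0}\neq 0$. From \eqref{expressiondestaubeta}, the relation reads
\begin{equation*}
    \sum_{u=u_0}^{b_n}\nu_u\sum_{i=u}^{b_n} z^{rn}P_{n,i}(z)\frac{\big(-\log(z)\big)^{i-u}}{(i-u)!} = 0 .
\end{equation*}
We view this as a polynomial in $\log(z)$ with coefficients in $\C[z]$. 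The highest power of $\log(z)$ that can appear is $b_n-u_0$. It occurs only in the term $i=b_n$, $u=u_0$, with coefficient $\nu_{u_0}\,z^{rn}P_{n,b_n}(z)\,(-1)^{b_n-u_0}/(b_n-u_0)!$.

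The key input is the algebraic independence of $\log(z)$ over $\C(z)$: a relation $\sum_{s=0}^{M}R_s(z)\log(z)^s=0$ with $R_s\in\C(z)$ forces all $R_s=0$. To prove it, take a relation of minimal degree $M\geq 1$ with $R_M=1$, differentiate it and subtract. This gives a relation of smaller degree with leading coefficient $R_{M-1}'+M/z$, which must therefore vanish. Hence $R_{M-1}'=-M/z$, which is impossible since $1/z$ has non-zero residue at $0$ and so has no rational antiderivative. Alternatively, use monodromy: analytic continuation around $0$ replaces $\log$ by $\log+2i\pi$, and comparing the polynomial identities obtained yields the same conclusion. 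This is exactly the residue argument already used in the proof of linear independence of the $\rho_{n,q}$, so it fits the paper's style.

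Applying this, the leading coefficient must vanish, so $\nu_{u_0}z^{rn}P_{n,b_n}(z)=0$. By the definition \eqref{définitiondebn} of $b_n$ we have $P_{n,b_n}\neq 0$, hence $\nu_{u_0}=0$, which contradicts the choice of $u_0$. Therefore the family $(\tau_{n,u})_{1\leq u\leq b_n}$ is linearly independent over $\C$.

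The only step needing some care is the transcendence of $\log$ over $\C(z)$. It is classical, and the residue argument above settles it. The triangular bookkeeping is immediate.
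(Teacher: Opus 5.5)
Your proof is correct and is essentially the paper's own argument. Both take the smallest index $u_{\min}$ with $\nu_{u_{\min}}\neq 0$, isolate the coefficient of $\big(-\log(z)\big)^{b_n-u_{\min}}$, which is a non-zero multiple of $z^{rn}P_{n,b_n}(z)$, and conclude from the transcendence of $\log(z)$ over $\C(z)$. The only difference is that you supply a self-contained residue proof of that transcendence, which the paper simply quotes as known.
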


\begin{proof}
    Assume by contradiction that there exist $\nu_1, ..., \nu_{{b_n} }\in\C$ not all zero such that $\sum_{u=1}^{{b_n} } \nu_u \tau_{n, u} (z) \break = 0$. Let $u_{\min}$ be the smallest index in $\llbracket 1, {b_n} \rrbracket$ such that $\nu_{u_{\min}} \neq 0$.

    We then have the following $\C[z]$-linear relation among the powers of $\log(z)$:
    \begin{equation*}
        \sum_{u=u_{\min}}^{{b_n} } \sum_{i=u}^{{b_n} } \nu_u z^{rn}P_{n, i}(z)\frac{\Big(-\log(z)\Big)^{i-u}}{(i-u)!} = 0.
    \end{equation*}
    Since $\log(z)$ is transcendental over $\C[z]$, the coefficient of $\Big(-\log(z)\Big)^{{b_n} -u_{\min}}$ in this relation has to be zero:
    \begin{equation*}
        \frac{\nu_{u_{\min}}}{({b_n} -u_{\min})!}z^{rn}P_{n, {b_n} }(z) = 0.
    \end{equation*}
    This is a contradiction, since $P_{n, b_n} \neq 0$ by the definition \eqref{définitiondebn} of $b_n$, and $\nu_{u_{\min}} \neq 0$ by definition of $u_{\min}$.
\end{proof}

\subsection{Application of ``Shidlovskii's lemma" and contradiction}
\label{appplicationdeShidlovskii}

We will apply \sref{Theorem}{Shidlovskii} with $d = (h+1)N + b_n$ the size of the matrix $\mathscr{A}_n$ and $\Delta = (r+1)n$ bounding the degrees of the polynomials $S_{n, i}$. 

We consider the finite set of points $\Sigma = \{\mu^\ell z_1 \;|\; 0\leq\ell\leq N-1\}\cup\{0, 1, \infty\}$. Recall that the notion of order was defined in \sref{Subsection}{paragrapheénoncédeShidlovskii}, and that logarithmic factors have no influence on it. From equations \eqref{annulationdesfp} and \eqref{definitiondesrhoq}, we get
\begin{equation*}
    \forall q\in\llbracket 0, h \rrbracket, \;\;\;\forall \ell\in\llbracket 0, N-1\rrbracket, \hspace{1.5cm} \mathrm{ord}_{\mu^{-\ell} z_1}\Big(\rho_{n, q}(\mu^\ell z)\Big) \geq \kappa n.
\end{equation*}

From equation \eqref{expressiondestaubeta} we deduce that
\begin{equation*}
    \forall u\in\llbracket 1, {b_n} \rrbracket, \hspace{1.5cm} \mathrm{ord}_0\Big(\tau_{n, u}(z)\Big) \geq rn.
\end{equation*}

To get a lower bound on the order of $\tau_{n, u}$ at $\infty$, let us rewrite \eqref{expressiondestaubeta} as
\begin{equation*}
    \tau_{n, u}(z) = \sum_{i=u}^{{b_n} } \left(\frac{1}{z}\right)^{-(r+1)n}\frac{1}{z^n}P_{n, i}(z)\frac{\log\left(\frac{1}{z}\right)^{i-u}}{(i-u)!}.
\end{equation*}
Since $\deg P_{n, i} \leq n$, the Laurent polynomials $\frac{1}{z^n}P_{n, i}(z)$ only have negative powers of $z$, and are thus holomorphic at infinity. We deduce that
\begin{equation*}
    \forall u\in\llbracket 1, {b_n} \rrbracket, \hspace{1.5cm} \mathrm{ord}_\infty\Big(\tau_{n, u}(z)\Big) \geq -(r+1)n.
\end{equation*}

\smallskip
Thus, \sref{Theorem}{Shidlovskii} provides a constant $c_{1}$ which only depends on $\Sigma$ and $\mathscr{A}_n$, such that
\begin{equation*}
    (h+1)N\kappa n + {b_n} rn - {b_n} (r+1)n \leq \big((r+1)n+1\big)\big((h+1)N+{b_n} -{b_n} \big) + c_{1},
\end{equation*}
that is
\begin{equation}
\label{nassezgrand2}
    \Big((h+1)(\kappa-r-1)N -{b_n}\Big)n \leq (h+1)N+c_{1} .
\end{equation}

As explained in the remark below, $c_1$ depends on $\mathscr{A}_n$ but it is bounded independently of $n$. Since $\Sigma$ is fixed, we can thus assert that for $n$ large enough, the term multiplying $n$ on the left-hand side of this inequality cannot be positive. That means $(h+1)(\kappa -r-1)N \leq {b_n}  \leq a$.

This is a contradiction with the assumption $(h+1)(\kappa-r-1)N > a$, and the proof of \sref{Proposition}{propositiondéduitedeshidlovskii} is complete.

\begin{remark*}
    The proof of \sref{Theorem}{Shidlovskii} given by \cite[Theorem 3.1]{fischler2018} induces the expression
        \begin{equation*}
            c_{1} = \sum_{\sigma\in\Sigma}\left((\kappa_\sigma + 1)\frac{\nu(\nu-1)}{2}-(\nu-\#J_\sigma)r_\sigma\right) - \nu(\nu-1)-(\nu-\#J_\infty)
        \end{equation*}
        where $\kappa_\sigma$ and $r_\sigma$ are respectively an upper bound on the rank and a lower bound on the generalized orders of a complete system of solutions of $Y' = \mathscr{A}_n Y$. All of these notions are defined in \cite[p.~154]{fischler2018}. In the context of the present paper, we can set
        \begin{enumerate}
            \item[$\bullet$] $\kappa_\sigma = 0$ for all $\sigma\in\Sigma$. Indeed, all of the singular points of the differential system $Y' = \mathscr{A}_n Y$ are regular, which implies that any solution of this system is in the Nilsson class at every point.
            \item[$\bullet$] $r_\sigma = 0$ for all $\sigma\in\Sigma$. Indeed, the matrix $\mathscr{A}_n$ is strictly lower triangular, so that all exponents of the differential system $Y' = \mathscr{A}_n Y$ are zero (see \cite[§6.2.4]{coddingtoncarlson97}, where what we call exponents here are the eigenvalues of the matrix $\mathbf{S}$).
            \item[$\bullet$] $\nu = d = N(h+1)+b_n$ by considering the differential operator $L$ of order $d$ constructed in the proof of \cite[Corollary 6.1.2]{chapitre1}.
        \end{enumerate}
        We also have $\#\Sigma = N+3$ and $\#J_\infty = {b_n} $, so that
        \begin{equation*}
            c_{1} = (N+1)\frac{\big(N(h+1)+b_n\big)\big(N(h+1)+b_n-1\big)}{2}-N(h+1) \leq (N+1)\frac{\big(N(h+1)+a\big)^2}{2}.
        \end{equation*}
\end{remark*}

\section{Conclusion}
\setcounter{subsection}{1}
\label{partie7}

In this section, we apply the following linear independence criterion, which generalizes \cite[Proposition 1, p.6]{fischler2026} and is proved in \cite[Proposition 7.1.1]{chapitre1}, to the linear combinations $\Lambda_{n, (p, k)}(z_1)$. We then achieve the proof of \sref{Theorem}{théorèmeprincipal}.

\begin{proposition}
\label{critèred'indépendancelinéaire}
    (Linear independence criterion)

    Let $\mathbb{K}$ be a number field. We denote by $\mathbb{K}_\infty$ the field $\R$ if $\mathbb{K}\subseteq \R$ and the field $\C$ otherwise.
    
    Let $s\geq 0$ be an integer. Let $\bm{\zeta} = \prescript{t}{}{(}\zeta_0, ..., \zeta_s) \in \mathbb{K}_\infty^{s+1}$ and $\psi: \mathbb{K}_\infty^{s+1} \to \mathbb{K_\infty}$ be a linear form with coefficients in $\mathbb{K}$ such that $\psi(\bm{\zeta}) \neq 0$.

    Let $\tau > 0$ be a real number and $(Q_n)_n$ be a sequence of real numbers that goes to $+\infty$. Let $\mathcal{N}\subseteq \Z_{\geq 0}$ be an infinite set. Suppose we have, for every $n\in\mathcal{N}$, an integer $K_n \geq 1$ and coefficients $\lambda_{n, i, k} \in \mathcal{O}_{\mathbb{K}}$, $\;0\leq i \leq s, \;0\leq k\leq K_n$ such that: 
    \begin{enumerate}
        \item[$(i)$] the linear combinations are bounded above:
            \begin{equation*}
                \forall k\in\llbracket 0, K_n\rrbracket, \;\;\;\left| \sum_{i=0}^s \lambda_{n, i, k}\zeta_i\right| \stackunder{$\leq$}{$\scriptscriptstyle{n\to+\infty}$} Q_n^{-\tau+o(1)},
            \end{equation*}
        \item[$(ii)$] the houses of the coefficients are bounded above:
            \begin{equation*}
                \forall i\in\llbracket 0, s\rrbracket,\;\;\; \forall k\in\llbracket 0, K_n\rrbracket, \;\;\; \house{\lambda_{n, i, k}} \stackunder{$\leq$}{$\scriptscriptstyle{n\to+\infty}$} Q_n^{1+o(1)},
            \end{equation*}
        \item[$(iii)$] for every $n\in\mathcal{N}$ large enough, we have for all $\bm{x} = \prescript{t}{}{(}x_0, ..., x_s)\in\mathbb{K}^{s+1}$ : 
        \begin{equation*}
            \bigg( \forall k \in \llbracket 0, K_n \rrbracket \;\;\; \sum_{i=0}^s \lambda_{n, i, k}x_i = 0\bigg) \;\Rightarrow \;\;\psi(\bm{x}) = 0.
        \end{equation*}
    \end{enumerate}
    Then
    \begin{equation*}
        \mathrm{dim}_{\mathbb{K}}\mathrm{Span}_{\mathbb{K}}(\zeta_0, ..., \zeta_s) \geq \frac{[\mathbb{K}_\infty: \R]}{[\mathbb{K}:\mathbb{Q}]}(\tau+1).
    \end{equation*}
\end{proposition}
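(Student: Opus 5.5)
The plan is to argue by contradiction, following the Siegel-type approach of \cite[Proposition 1]{fischler2026}, since this criterion is its generalization. Write $d := \dim_{\mathbb{K}}\mathrm{Span}_{\mathbb{K}}(\zeta_0,\dots,\zeta_s)$ and consider the $\mathbb{K}$-vector space
\[
E := \{\bm{x}\in\mathbb{K}^{s+1} : \textstyle\sum_i x_i\zeta_i = 0\},
\]
which has dimension $s+1-d$. Put $D := [\mathbb{K}:\Q]$ and $\epsilon := [\mathbb{K}_\infty:\R]$. Let $\sigma_1,\dots,\sigma_D$ be the embeddings of $\mathbb{K}$, with $\sigma_1$ the fixed one. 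If $\mathbb{K}\not\subseteq\R$, then $\sigma_1$ and $\overline{\sigma_1}$ are distinct, and both see the same small quantity $|L|$.

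\textbf{Step 1.} For $n\in\mathcal{N}$ large, I extract an invertible system. Let $L_k(\bm{x}) := \sum_i \lambda_{n,i,k}x_i$. Assumption (iii) says that $\psi$ lies in the $\mathbb{K}$-span of the $L_k$. Since $\psi(\bm{\zeta})\neq 0$, the restriction of $\psi$ to $E\otimes\mathbb{K}_\infty$ is nonzero in a suitable sense.

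More concretely, choose a basis $e_1,\dots,e_{s+1-d}$ of $E$ with $\mathcal{O}_{\mathbb{K}}$ coordinates; these are fixed and independent of $n$. Complete it by standard vectors $e'_1,\dots,e'_{d}$ so that the whole family is a basis of $\mathbb{K}^{s+1}$ and $\psi(e'_1)\neq0$. This is possible because $\psi$ does not vanish on $E^{\perp}$-directions, that is, $\psi(\bm{\zeta})\ne0$ means $\psi$ does not lie in the span of the forms vanishing at $\bm\zeta$. Better: use the dual picture. The forms $L_k$ span a space $W_n$ containing $\psi$. Pick $L_{k_1},\dots,L_{k_t}$ forming a basis of $W_n$.

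\textbf{Step 2.} This is the key step: a nonzero algebraic integer built from the $L_k$. Consider the forms $x\mapsto L_k(\bm x)$ restricted to a $d$-dimensional complement $F$ of $E$, that is, rows of length $d$ obtained by writing $\bm{\zeta}$-type coordinates. Since $\psi(\bm\zeta)\neq0$ and $\psi\in W_n$, some $L_{k}$ satisfies $L_k(\bm\zeta)\ne0$, but that alone is weak. The real argument is as follows. Choose $d$ indices $k_1,\dots,k_d$ such that the $d\times d$ matrix $M$, with entries $L_{k_j}$ evaluated on a fixed basis of a complement of $E$ adapted to $\bm\zeta$, is invertible. Such a choice exists because otherwise all $L_k$ would vanish on a common nonzero vector of $\mathbb{K}^{s+1}$ modulo $E$, and by (iii) $\psi$ would vanish there. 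Arranging the complement so that one basis vector $\bm v$ satisfies $\psi(\bm v)\ne 0$ after projecting $\bm\zeta$, this contradicts $\psi(\bm\zeta)\neq 0$. Handling this linear algebra correctly, namely passing from $\mathbb{K}$-relations to the real or complex vector $\bm\zeta$ via the decomposition $\bm\zeta = \sum_j \omega_j \bm v_j + (\text{element of } E\otimes\mathbb{K}_\infty)$ with $\omega_1,\dots,\omega_d\in\mathbb{K}_\infty$ linearly independent over $\mathbb{K}$, is the main obstacle.

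\textbf{Step 3.} Then $\Delta := \det M \in \mathcal{O}_{\mathbb{K}}\setminus\{0\}$, so $\prod_\sigma|\sigma(\Delta)|\ge1$. I bound $|\sigma_1(\Delta)|$ by replacing one column by the combination $\sum_j \omega_j\,(\text{column } j)$, which equals $L_{k}(\bm\zeta)$, of size $Q_n^{-\tau+o(1)}$ by (i), while the other entries are $Q_n^{1+o(1)}$ by (ii). This gives $|\sigma_1(\Delta)|\le Q_n^{d-1-\tau+o(1)}$, and the same holds at $\overline{\sigma_1}$ when $\epsilon=2$. The remaining $D-\epsilon$ conjugates are at most $Q_n^{d+o(1)}$ by (ii). Hence
\[
1\le Q_n^{\epsilon(d-1-\tau)+(D-\epsilon)d+o(1)},
\]
and letting $n\to\infty$ with $Q_n\to\infty$ gives $Dd\ge\epsilon(\tau+1)$, which is the claimed bound.
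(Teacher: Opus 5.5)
\textbf{Gap in Step 2.} The claim that you can choose indices $k_1,\dots,k_d$ making the $d\times d$ matrix invertible does not follow from (iii), and it is false in general. Suppose the restrictions of the forms $L_k$ to the $d$-dimensional space carrying $\bm{\zeta}$ have a common nonzero zero $\bm{y}$ (which depends on $n$). Then (iii) only gives $\psi(\bm{y})=0$. This contradicts nothing when $d\ge 2$: $\psi$ vanishes on a whole hyperplane, and you cannot ``arrange'' a basis fixed in advance so that $\bm{y}$ avoids it.

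Here is a concrete counterexample. Take $\mathbb{K}=\Q$, $\bm{\zeta}=(1,\sqrt2,\sqrt3)$ and $\psi=X_0$. Let $p_n/q_n$ be the convergents of $\sqrt2$, put $Q_n=q_n$, and use the two forms $q_nX_1-p_nX_0$ and $q_{n+1}X_1-p_{n+1}X_0$. Then (i)--(iii) hold with $\tau=1$, since $p_nq_{n+1}-p_{n+1}q_n=\pm1$ forces common zeros to satisfy $x_0=x_1=0$. Yet the forms have rank $2<d=3$.

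This weak form of (iii) is exactly what Section~\ref{partie6} provides: common zeros are only shown to satisfy $x_0^{<m>}=0$. So nothing stronger is available in the application. Your Step 3 replaces a column by the values $L_{k_j}(\bm{\zeta})$ and never uses $\psi(\bm{\zeta})\neq0$. It is the proof of the classical full-rank Siegel criterion, not of this statement.

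There is also a fixable confusion between relation vectors and points. The $L_k$ do not vanish on $E$, so the decomposition $\bm{\zeta}=\sum_j\omega_j\bm{v}_j+(\text{element of }E\otimes\mathbb{K}_\infty)$ does not give $\sum_j\omega_jL_k(\bm{v}_j)=L_k(\bm{\zeta})$. The right space is $V:=\{\bm{x}\in\mathbb{K}^{s+1}:\sum_i e_i x_i=0\ \forall\bm{e}\in E\}$, of dimension $d$. Then $\bm{\zeta}=\sum_j\theta_j\bm{v}_j$ for a basis $(\bm{v}_j)$ of $V$ in $\mathcal{O}_{\mathbb{K}}^{s+1}$, with no extra component.

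\textbf{Missing idea.} Work with the rank $t_n\le d$ of the restrictions $L_k|_V$, and let $\psi$ produce the smallness.

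By (iii), $\psi|_V$ lies in the span of the $L_k|_V$, and $\psi|_V\neq0$ because $\psi(\bm{\zeta})\neq0$. Choose $L_{k_1},\dots,L_{k_{t_n}}$ whose restrictions form a basis of this span. Choose also a set $J$ of $t_n$ columns such that $\Delta:=\det\big(L_{k_i}(\bm{v}_j)\big)_{1\le i\le t_n,\,j\in J}\in\mathcal{O}_{\mathbb{K}}$ is nonzero.

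Apply Cramer's rule to $\psi|_V=\sum_i c_iL_{k_i}|_V$. This gives $\Delta\,\psi(\bm{\zeta})=\sum_i\Delta_iL_{k_i}(\bm{\zeta})$, where $\Delta_i$ is $\Delta$ with its $i$-th row replaced by the fixed vector $(\psi(\bm{v}_j))_{j\in J}$.

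Hence $|\Delta|\le|\psi(\bm{\zeta})|^{-1}\sum_i|\Delta_i|\,|L_{k_i}(\bm{\zeta})|\le Q_n^{t_n-1-\tau+o(1)}$, and this is exactly where $\psi(\bm{\zeta})\neq0$ is used. All other conjugates of $\Delta$ are at most $Q_n^{t_n+o(1)}$. Your norm computation in Step 3 then goes through with $d$ replaced by $t_n$, giving $[\mathbb{K}:\Q]\,t_n\ge[\mathbb{K}_\infty:\R](\tau+1)$. Since $t_n\le d$, this concludes.
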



Let us apply this criterion to the linear combinations $\Lambda_{n, (p, k)}(z_1)$, $(p, k)\in\llbracket 0, h\rrbracket \times \llbracket (r+1)n+1, \kappa n\rrbracket$ constructed in \autoref{partie4}. We only consider integers $a\in\Z_{>0}$ that are multiples of $10^4$ and large enough for $3.3\sqrt{a\log(a)/N} < \frac{a}{N}$ to hold. After numerical computations aiming at maximizing the constant $0.26$, we let $r=5.3$, $\kappa = 8.8343$, $\omega = 10$, and $\Omega = \lfloor \rho\sqrt{a\log(a)/N}\rfloor$, $h=\nu a$ with $\rho=3.3$ and $\nu = 0.3946$.

According to \eqref{Lambdacommecldeszeta}, the numbers $\Lambda_{n, (p, k)}(z_1)$ are linear combinations of the coordinates of $\bm{\zeta} = \prescript{t}{}{\big(}{f}(0), f(-1), ..., {f}(1-N), L({f}, 1, 1/z_1), ..., L({f}, a+h, 1/z_1)\big)$. Let us check that they satisfy the assumptions of \sref{Proposition}{critèred'indépendancelinéaire}.

According to \sref{Proposition}{majorationdescoefficients}, these linear combinations have coefficients in $\mathcal{O}_{\Q(z_1)}$. Moreover, Propositions\sref{\hspace{1mm}}{majorationdescoefficients} and\sref{\hspace{1mm}}{majorationdescombinaisons} show that assumptions $(i)$ and $(ii)$ hold, with $Q_n =\beta^n$ and $\tau = -\frac{\log(\alpha)}{\log(\beta)}$, so that $Q_n^{-\tau} = \alpha^n$. We recall that
\begin{equation*}
\begin{split}
    \alpha = \den(z_1)^{r+1}\Big(e^3(2a+1) \max(1, |z_1|)g(z_1)\Big)^\kappa \frac{\xi}{r^\Omega}, \\
    \beta = \bigg(\den(z_1) \max\Big(1, \house{z_1}\Big)\bigg)^{r+1} \Big(32e^3(2a+1)\Big)^\kappa \xi,
\end{split}
\end{equation*}
and
\begin{equation*}
    \xi := \exp\Bigg(\frac{\omega\log(2)+2\omega^2 + \omega^2\log(a+1)+\frac{1}{2}\Omega^2\log(r)}{\frac{a}{N}-\omega}\Bigg).
\end{equation*}

Lastly, since $(h+1)(\kappa-r-1)N > h(\kappa-r-1) = 1.00003a > a$, \sref{Proposition}{propositiondéduitedeshidlovskii} shows that assumption $(iii)$ holds for all of the linear forms
\begin{equation*}
    \psi_m (x_0^{<0>}, ..., x_0^{<N-1>}, x_1, ..., x_{a+h}) := x_0^{<m>} \hspace{1cm} 0\leq m \leq N-1.
\end{equation*}
Since $f\neq 0$, we can choose an $m$ that satisfies $\psi_m(\bm{\zeta}) = f(-m) \neq 0$.

We can at last apply \sref{Proposition}{critèred'indépendancelinéaire} with $\K = \Q(z_1)$ and the linear form $\psi_{m}$ as above. We obtain
\begin{equation*}
    \mathrm{dim}_{\Q(z_1)}\mathrm{Span}_{\Q(z_1)} \Big\{{f}(0), ..., {f}(N-1), L({f}, i, 1/z_1) \;|\;1\leq i\leq a+h\Big\} \stackunder{$\geq$}{$\scriptscriptstyle{a\to+\infty}$} \hspace{5mm} \frac{[\K_\infty: \R]}{[\Q(z_1): \Q]}\left( 1 - \frac{\log(\alpha)}{\log(\beta)}\right).
\end{equation*}

\smallskip
Let us compute the asymptotics of this lower bound as $a \xrightarrow[]{} +\infty$:
\begin{equation*}
    \log(\xi) \stackunder{$=$}{$\scriptscriptstyle{a\to+\infty}$} \frac{\frac{\rho^2}{2N}a\log(a)\log(r)}{\frac{a}{N}}\big(1+o(1)\big) \stackunder{$=$}{$\scriptscriptstyle{a\to+\infty}$} \frac{\rho^2\log(r)}{2}\big(1+o(1)\big) \log(a),
\end{equation*}
then
\begin{equation*}
    \log(\beta) \stackunder{$=$}{$\scriptscriptstyle{a\to+\infty}$} \big(\kappa\log(a) + \log(\xi)\big)\big(1+o(1)\big) \stackunder{$=$}{$\scriptscriptstyle{a\to+\infty}$} \bigg(\kappa + \frac{\rho^2\log(r)}{2}\bigg)\big(1+o(1)\big)\log(a)
\end{equation*}
and
\begin{align*}
    -\log(\alpha) &\stackunder{$=$}{$\scriptscriptstyle{a\to+\infty}$} \left(\frac{\rho}{\sqrt{N}}\sqrt{a\log(a)}+O(1)\right)\log(r) - \kappa \log(a)\big(1+o(1)\big) - \log(\xi)\\
    &\stackunder{$=$}{$\scriptscriptstyle{a\to+\infty}$} \frac{\rho}{\sqrt{N}}\log(r)\big(1+o(1)\big)\sqrt{a\log(a)}.
\end{align*}
Hence,
\begin{equation*}
     - \frac{\log(\alpha)}{\log(\beta)} \stackunder{$=$}{$\scriptscriptstyle{a\to+\infty}$} \frac{2\rho\log(r)\big(1+o(1)\big)}{\big(2\kappa + \rho^2\log(r)\big)\sqrt{N}}\sqrt{\frac{a}{\log(a)}}.
\end{equation*}

\smallskip
The numbers ${f}(-m), 1\leq m\leq N,\;$ are in finite number: they span a $\Q(z_1)$-vector space of dimension $O(1) = o\left(\sqrt{\frac{a}{\log(a)}}\right)$ as $a \to +\infty$ and can thus be ignored.

By noticing that $$\sqrt{\frac{a}{\log(a)}} \stackunder{$=$}{$\scriptscriptstyle{a\to+\infty}$} \frac{1}{\sqrt{1+\nu}}\big(1+o(1)\big)\sqrt{\frac{a+h}{\log(a+h)}},$$ we can write
\begin{align*}
    \mathrm{dim}_{\Q(z_1)}\mathrm{Span}_{\Q(z_1)} \Big\{L({f}, i, 1/z_1) \;|\;&1\leq i\leq a+h \Big\} \\
    &\stackunder{$\geq$}{$\scriptscriptstyle{a\to+\infty}$} \frac{2\rho\log(r)\big(1+o(1)\big)}{\big(2\kappa + \rho^2\log(r)\big)\sqrt{1+\nu}}\frac{[\K_\infty: \R]}{\sqrt{N} [\Q(z_1): \Q]}\sqrt{\frac{a+h}{\log(a+h)}}.
\end{align*}

Now, with the choice of numerical values described at the beginning of this section, we compute $\frac{2\rho\log(r)}{\big(2\kappa + \rho^2\log(r)\big)\sqrt{1+\nu}} = 0.26013... > 0.26$. Notice that $[\K_\infty:\R] = 1$ if $\Q(z_1)\in\R$, and $2$ otherwise.

By letting $s = a+h$ and recalling that $z_1 = 1/z_0$, and thus $\Q(z_1) = \Q(z_0)$, we conclude the proof of \autoref{théorèmeprincipal} for integers $s$ large enough and written as $1.3946a$, with $10^4 | a$.

Finally, for arbitrary large enough integers $s$, we pick the largest multiple $a_s$ of $10^4$ satisfying $1.3946a_s \leq s$. We have $$\mathrm{Span}_{\Q(z_1)} \Big\{L({f}, i, 1/z_1) \;|\;1\leq i\leq s \Big\} \supseteq \mathrm{Span}_{\Q(z_1)} \Big\{L({f}, i, 1/z_1) \;|\;1\leq i\leq 1.3946a_s \Big\},$$ and since $\frac{1.3946a_s}{s} \xrightarrow[s\to+\infty]{} 1$ and the actual constant is $0.26013... > 0.26$, the result holds for all sufficiently large integers $s$.


\section*{AI utilization notice}

Throughout the writing of this paper, generative AI was used for bibliographic purposes and language checks.

\smallskip
\bibliographystyle{alpha}
\bibliography{bibliography}

\begin{thebibliography}{Chu84}

\bibitem[Ap{\'e}79]{apery79}
R.~Ap{\'e}ry.
\newblock Irrationnalité de $\zeta(2)$ et $\zeta(3)$.
\newblock {\em Astérisque}, 61:11--13, 1979.

\bibitem[BB85]{bertrandbeukers85}
D.~Bertrand and F.~Beukers.
\newblock Équations différentielles linéaires et majorations de multiplicités.
\newblock {\em Ann. Sci. École Norm. Sup.}, 18.1:181--192, 1985.

\bibitem[Ber12]{bertrand2012}
D.~Bertrand.
\newblock Le théorème de {S}iegel-{S}hidlovsky revisité.
\newblock In {\em Number theory, Analysis and Geometry : in memory of Serge Lang}, Astérisque, pages 51--67. Springer, 2012.

\bibitem[BR01]{ballrivoal2001}
K.~Ball and T.~Rivoal.
\newblock Irrationalité d'une infinité de valeurs de la fonction zêta aux entiers impairs.
\newblock {\em Invent. Math.}, 146:193--207, 2001.

\bibitem[CC85]{chudnovskychudnovsky85}
D.~Chudnovsky and G.~Chudnovsky.
\newblock Applications of {P}adé approximations to {D}iophantine inequalities in values of {G}-functions.
\newblock {\em Number theory (New York 1983/84), Lecture Notes in Math., Springer}, 1135:9--51, 1985.

\bibitem[CC97]{coddingtoncarlson97}
E.~A. Coddington and R.~Carlson.
\newblock {\em Linear {O}rdinary {D}ifferential {E}quations}.
\newblock Soc. for Industrial and Applied Math., 1997.

\bibitem[Chu84]{chudnovsky84}
G.~Chudnovsky.
\newblock On applications of {D}iophantine approximations.
\newblock {\em Proceedings of the National Academy of Sciences of the United States of America}, 81.22:7261--7265, 1984.

\bibitem[Fis04]{fischler2004}
S.~Fischler.
\newblock Irrationalité de valeurs de zêta d'après {A}péry, {R}ivoal, ...
\newblock In {\em Sém. Bourbaki 2002/03}, number 294 in Astérisque, pages 27--62, 2004.
\newblock exp. no. 910.

\bibitem[Fis18]{fischler2018}
S.~Fischler.
\newblock Shidlovsky's multiplicity estimate and irrationality of zeta values.
\newblock {\em J. Austral. Math. Soc.}, 105.2:147--172, 2018.

\bibitem[Fis20]{fischler2020}
S.~Fischler.
\newblock Irrationality of values of ${L}$-functions of {D}irichlet characters.
\newblock {\em J. London Math. Soc.}, 101.2:857--876, 2020.

\bibitem[Fis26]{fischler2026}
S.~Fischler.
\newblock Linear independence of odd zeta values using {S}iegel’s lemma.
\newblock {\em J. London Math. Soc.}, 113.4, 2026.

\bibitem[FR03]{fischlerrivoal2003}
S.~Fischler and T.~Rivoal.
\newblock Approximants de {P}adé et séries hypergéometriques équilibrées.
\newblock {\em Journal de Math. Pures et Appliquées}, 82.10:1369--1394, 2003.

\bibitem[FR20]{fischlerrivoal2020}
S.~Fischler and T.~Rivoal.
\newblock Linear independence of values of {G}-functions.
\newblock {\em J. Eur. Math. Soc.}, 22.5:1531--1576, 2020.

\bibitem[FR21]{fischlerrivoal2021}
S.~Fischler and T.~Rivoal.
\newblock Linear independence of values of {G}-functions, {I}{I}. {O}utside the disk of convergence.
\newblock {\em Ann. Math. Québec}, 45.1:53--93, 2021.

\bibitem[FSZ19]{fischlersprangzudilin2019}
S.~Fischler, J.~Sprang, and W.~Zudilin.
\newblock Many odd zeta values are irrational.
\newblock {\em Compositio Math.}, 155:938--952, 2019.

\bibitem[Hat90]{hata90}
M.~Hata.
\newblock On the linear independence of the values of polylogarithmic functions.
\newblock {\em J. Math. Pures Appl.}, 69.2:133--173, 1990.

\bibitem[Lai25]{lai2025}
L.~Lai.
\newblock A note on the number of irrational odd zeta values, {II}.
\newblock {\em Preprint arXiv:2501.05321 [math.NT]}, 2025.

\bibitem[LY20]{laiyu2020}
L.~Lai and P.~Yu.
\newblock A note on the number of irrational odd zeta values.
\newblock {\em Compositio Math.}, 156:1699--1717, 2020.

\bibitem[Mar06]{marcovecchio2006}
R.~Marcovecchio.
\newblock Linear independence of linear forms in polylogarithms.
\newblock {\em Annali Scuola Norm. Sup. Pisa}, V.1:1--11, 2006.

\bibitem[Mis26]{chapitre1}
L.~Mistiaen.
\newblock Linear independence of values of {D}irichlet {L} functions.
\newblock {\em Preprint arXiv:2512.02612 [math.NT]}, 2026.

\bibitem[Nes85]{nesterenko85}
Y.~V. Nesterenko.
\newblock Linear independence of numbers.
\newblock {\em Vestnik Moskov. Univ. Ser. I Mat. Mekh.}, 1:46--49, 1985.

\bibitem[Nes96]{nesterenko96}
Y.~V. Nesterenko.
\newblock A few remarks on $\zeta(3)$.
\newblock {\em Math. Notes}, 59.6:625--636, 1996.

\bibitem[Neu99]{neukirch97}
J.~Neukirch.
\newblock {\em Algebraic {N}umber {T}heory}.
\newblock Springer, 1999.

\bibitem[Nik79]{nikishin79}
E.~M. Nikishin.
\newblock On the irrationality of the values of the functions ${F}(x, s)$.
\newblock {\em Mat. Sbornik}, 37.3:381--388, 1979.

\bibitem[Nis11]{nishimoto2011}
M.~Nishimoto.
\newblock On the linear independence of the special values of a {D}irichlet series with periodic coefficients.
\newblock {\em Preprint arXiv:1102.3247 [math.NT]}, 2011.

\bibitem[Oes93]{oesterle93}
J.~Oesterlé.
\newblock Polylogarithmes.
\newblock In {\em Séminaire {N}. {B}ourbaki 1992/93}, number 216 in Astérisque, pages 49--67, 1993.
\newblock exp. no. 762.

\bibitem[Riv00]{rivoal2000}
T.~Rivoal.
\newblock La fonction {Z}êta de {R}iemann prend une infinité de valeurs irrationnelles aux entiers impairs.
\newblock {\em C.R.A.S. Paris, Sér. I Math.}, 331.4:267--270, 2000.

\bibitem[Riv03]{rivoal2003}
T.~Rivoal.
\newblock Indépendance linéaire des valeurs des polylogarithmes.
\newblock {\em J. Théor. Nombres Bordeaux}, 15.2:551--559, 2003.

\bibitem[Shi89]{shidlovskii89}
A.~B. Shidlovskii.
\newblock {\em Transcendental numbers}.
\newblock de Gruyter Studies in Math, no. 12, 1989.

\end{thebibliography}

\end{document}